\newcommand{\g}{\mathfrak{g}}
\newcommand{\uqg}{\mathcal{U}_q(\mathfrak{g})}
\newcommand{\ra}{\rightarrow}
\newcommand{\ve}{\varepsilon}
\newcommand{\vp}{\varphi}
\newcommand{\ts}{\otimes}
\newcommand{\s}{\sigma}
\newcommand{\pp}{\mathcal{P}}
\newcommand{\mz}{\mathbb{Z}}
\newcommand{\mc}{\mathbb{C}}
\newcommand{\fb}{\mathfrak{B}}
\newcommand{\fn}{\mathfrak{N}}
\newcommand{\p}{\partial}
\newtheorem{definition}{Definition}
\newtheorem{proposition}{Proposition}
\newtheorem{theorem}{Theorem}
\newtheorem{lemma}{Lemma}
\newtheorem{corollary}{Corollary}
\newtheorem{remark}{Remark}
\newtheorem{example}{Example}
\title{On defining ideals and differential algebras of Nichols algebras}
\author{Xin Fang}
\address{Universit\'{e} Paris 7, Institut de Math\'{e}matiques de Jussieu, Th\'{e}orie des groupes et des repr\'{e}sentations, Case 7012, 2 place Jussieu, 75251 Paris Cedex 05, France.}
\email{fang@math.jussieu.fr}
\begin{document}

\maketitle

\begin{abstract}
This paper is devoted to understanding the defining ideal of a Nichols algebra from the decomposition of specific elements in the group algebra of braid groups. A family of primitive elements are found and algorithms are proposed. To prove the main result, the differential algebra of a Nichols algebra is constructed. Moreover, another point of view on Serre relations is provided.
\end{abstract}

\section{Introduction}
Nichols algebras, as their name says, are constructed by W.D. Nichols in \cite{Nic78} with the name "bialgebra of type one" for classifying finite dimensional graded Hopf algebras generated by elements in degree $0$ and $1$. No much attention was paid to his work at that time until quantized enveloping algebras are constructed by Drinfel'd and Jimbo in the middle of eighties.
\par
The construction of Nichols, after having slept for about 15 years, is highlighted by M. Rosso in his article \cite{Ros98} to give a functorial and coordinate free construction of quantized enveloping algebras, which meanwhile gives the motivation and another point of view to researches on pointed Hopf algebras.
\par
The construction in \cite{Ros98} starts with a Hopf algebra $H$ and an $H$-Hopf bimodule $M$. Then the set of right coinvariants $M^R$ admits a braiding $\sigma_M:M^R\ts M^R\ra M^R\ts M^R$. Once the usual flip is replaced by this braiding, the classical construction of shuffle algebra will give a new Hopf algebra whose structure is controlled by this braiding, and is called quantum shuffle algebra. This construction gives many interesting examples: the positive part of a quantized enveloping algebra associated to a symmetrizable Cartan matrix can be found as a Hopf sub-algebra generated by $H$ and $M^R$ in the quantum shuffle algebra with some particular Hopf algebra $H$ and $H$-Hopf bimodule $M$.
\par
A well-known result affirms that there is an equivalence of braided category between the category of Hopf bimodules over $H$ and that of left $H$-Yetter-Drinfel'd modules, given by sending a Hopf bimodule $M$ to the set of its right coinvariants $M^R$; so it is possible to work in the context of Yetter-Drinfel'd modules at the very beginning. This gives a translation of language between Nichols-Rosso and Andruskiewitsch-Schneider.
\par
The dual construction of quantum shuffle algebra is easier to understand: it is the braided tensor algebra $T(V)$ for $V=M^R$ and the Nichols algebra $\mathfrak{N}(V)$ is the quotient of $T(V)$ by some Hopf ideal $\mathfrak{I}(V)$. As an example, it gives the strict positive part of quantized enveloping algebras when $H$ and $V$ are properly chosen.
\par
In the latter case, as a quotient of the braided Hopf algebra $T(V)$, $\mathfrak{N}(V)$ can be viewed as imposing some relations in $T(V)$ which have good behavior under the coproduct. It is natural to ask for the structure of such ideals, but unfortunately, this Hopf ideal is defined as the maximal coideal contained in the set of elements in $T(V)$ of degree no less than $2$ and it is very difficult to read out these relations directly from such an abstract definition.
\par
Similar problems arise in some other places in mathematics. For example, the Gabber-Kac theorem in the theory of Kac-Moody Lie algebras is of the same philosophy: starting with a symmetrizable Cartan matrix and some Chevalley generators, it is possible to construct a Lie algebra with Chevalley relations; but to get a Kac-Moody Lie algebra, one is forced to do the quotient by an ideal with some maximal properties and of course with mysteries. It is in Gabber-Kac \cite{GK81} that they proved that this ideal is generated by Serre relations, which completes the whole story. It should be remarked that this result is not simple at all: the proof clarifies some structures and uses several tools in Kac-Moody Lie algebras, such as generalized Casimir elements and Verma modules.
\par
As we know, for Nichols algebras, the problem of deciding generating relations in the ideal $\mathfrak{I}(V)$ is still open and the best general result can be found in the literature is due to M. Rosso \cite{Ros98}, P. Schauenburg \cite{Sbg96} and others, which affirms that elements of degree $n$ in $\mathfrak{I}(V)$ are those annihilated by the symmetrization operator $S_n$. However, it is still difficult to make these relations explicit because of the appearance of large amount of equations when a basis is chosen.
\par
The main objective of this paper is to give some restrictions on elements in $\mathfrak{I}(V)$ to obtain a number of important relations imposed, and we conjecture that these relations generate $\mathfrak{I}(V)$ as an ideal.
\par
To be more precise, the restriction is at first given by some operators $P_n$, called Dynkin operators, on $T(V)$; these operators can be viewed as analogues of the Dynkin projection operators in the characterization of free Lie algebras. Our first restriction is passing from $\ker(S_n)$ to $\ker(S_n)\cap Im(P_n)$: though the latter is somehow subtle at a first glimpse, in some important cases, they will generate the Hopf ideal $\mathfrak{I}(V)$; in general, if a conjecture of Andruskiewitsch-Schneider is true, the statement above holds for any Nichols algebras. This is proved by showing that all primitive elements of degree $n$ are eigenvectors of $P_n$ with eigenvalue $n$.
\par
Another restriction is given by concentrating on some levels in $T(V)$ having their origin in the decomposition of the element $S_n$ in the group algebra $\mc[\mathfrak{B}_n]$, where $\fb_n$ is the braid group on $n$ strands. The main idea here is building a bridge to connect some solutions of equation $S_nx=0$ in $V^{\ts n}$ with the invariant space for the central element $\theta_n$ of $\fb_n$, which are much easier to understand and compute. Moreover, it throws some light on understanding the structure of Nichols algebras from the representation theory of braid groups, though the latter is difficult indeed.
\par
When constructing this bridge, we captured the appearance of the Dynkin operator $P_n$ as an important ingredient. Moreover, it is a central tool when constructing solutions of $S_nx=0$ from solutions of the equation $\theta_nx=x$. As an example, we obtain quantized Serre relations from a pedestrian point of view: that is to say, the machinery will tell us what these relations are by assuming almost no knowledge on Lie theory and quantized enveloping algebra.
\par
A natural question is posed when observing the calculation in the exterior algebra and quantized enveloping algebras: whether elements with levels in $T(V)$ are primitive?
\par
The second part of this paper is devoted to give a positive answer in the general case: that is to say, for any braiding coming from a Yetter-Drinfel'd structure.
\par
The proof is based on the construction of the differential algebra of a Nichols algebra, which can be viewed as a generalization of the construction of the quantized Weyl algebra over a quantized enveloping algebra given in \cite{Fang10}. This construction, when restricted to Nichols algebras of diagonal type, gives a generalization for the construction of skew-derivations dates back to N.D. Nichols \cite{Nic78}, Section 3.3 and explicitly used in M. Gra\~na \cite{Gra00} and I. Heckenberger \cite{Hec06}, \cite{Hec08}. The advantage of our approach is: at the very beginning, we never make hypotheses on the type of the braiding, so this is a general construction shared by all kinds of Nichols algebras coming from a Yetter-Drinfel'd module.
\par
Once restricting ourselves to the diagonal case, with the help of these differential operators, we proved the classical Taylor lemma, which generalizes a result in Heckenberger \cite{Hec08}. Moreover, when the derivation is given by a primitive element of degree $1$, a decomposition theorem of $T(V)$ as a braided algebra is obtained, which can be viewed as a generalization of a result in classical Weyl algebra given by A. Joseph \cite{Jos77} in solving the Gelfan'd-Kirillov conjecture.
\par
At last, we show that elements with levels are all primitive with the help of these differential operators.\\
\par
The organization of this paper is as follows:
\par
In Section 2, some notions in Hopf algebras are recalled and notations are introduced. Section 3 is devoted to defining Dynkin operators and proving the "convolution invariance" of these operators. In Section 4, we are concerned with the decomposition of specific elements in the group algebra of braid groups, which is an algebraic preparation for solving equation $S_nx=0$. The construction of the bridge mentioned above is given in Section 5 and some properties of $\ker(S_n)\cap Im(P_n)$ are obtained. Section 6 contains examples in the diagonal case; a concrete calculation of quantized Serre relations in the case $\mathcal{U}_q(\mathfrak{sl}_3)$ is among examples. In Section 7, the differential algebra of a Nichols algebra is constructed and then the Taylor Lemma and a decomposition theorem are proved as an application in Section 8. Finally, the main theorem on primitive elements is demonstrated with the help of the differential algebra in Section 9.\\
\\
\noindent
\textbf{Acknowledgements.}
I am grateful to my advisor Marc Rosso for the discussion and treasurable remarks on this work. I would like to thank Victoria Lebed for her comments on an early version of this paper. I want to thank the referee for preventing me from mistakes and simplifying the proof of Theorem 1.

\section{Recollections on Hopf algebras}\label{section2}
From now on, suppose that we are working in the complex field $\mathbb{C}$. All algebras and modules concerned are over $\mathbb{C}$. Results in this section will hold for any field with characteristic $0$. All tensor products are over $\mathbb{C}$ if not specified otherwise.
\par
This section is devoted to giving a recollection on some constructions in Hopf algebras and fixing notations.

\subsection{Yetter-Drinfel'd modules}
Let $H$ be a Hopf algebra. A vector space $V$ is called a (left) $H$-Yetter-Drinfel'd module if:
\begin{enumerate}
\item It is simultaneously
an $H$-module and an $H$-comodule;
\item These two structures satisfy the Yetter-Drinfel'd compatibility condition: for any
$h\in H$ and $v\in V$,
$$\sum h_{(1)}v_{(-1)}\ts h_{(2)}.v_{(0)}=\sum (h_{(1)}.v)_{(-1)}h_{(2)}\ts (h_{(1)}.v)_{(0)},$$
where $\Delta(h)=\sum h_{(1)}\ts h_{(2)}$ and $\rho(v)=\sum v_{(-1)}\ts v_{(0)}$ are  Sweedler notations for coproduct and comodule structure map.
\end{enumerate}
\indent
Morphisms between two $H$-Yetter-Drinfel'd modules are linear maps preserving $H$-module
and $H$-comodule structures.
\par
We denote the category of $H$-Yetter-Drinfel'd modules by ${}_H^H\mathcal{YD}$; it is a tensor category.
\par
The advantage of working in the category of Yetter-Drinfel'd module is: for $V,W\in {}_H^H\mathcal{YD}$, there exists a braiding $\sigma_{V,W}:V\ts W\ra W\ts V$, given by $\sigma_{V,W}(v\ts w)=\sum v_{(-1)}.w\ts v_{(0)}$. This braiding gives ${}_H^H\mathcal{YD}$ a braided tensor category structure.
\par
Let $A$ and $B$ be two $H$-module algebras in ${}_H^H\mathcal{YD}$. Then the vector space $A\ts B$ admits an algebra structure with the following multiplication map:
\[
\begin{CD}
A\ts B\ts A\ts B @>{id\ts\sigma_{B,A}\ts id}>> A\ts A\ts B\ts B  @>{m_A\ts m_B}>> A\ts B,
\end{CD}
\]
where $m_A$ and $m_B$ are multiplications in $A$ and $B$, respectively. We let $A\underline{\ts} B$ denote this algebra.

\subsection{Braided Hopf algebras in ${}_H^H\mathcal{YD}$}
\begin{definition}[\cite{AS02}, Section 1.3]
A braided Hopf algebra in the category ${}_H^H\mathcal{YD}$ is a collection $(A,m,\eta,\Delta,\ve,S)$ such that:
\begin{enumerate}
\item $(A,m,\eta)$ is an algebra in ${}_H^H\mathcal{YD}$; $(A,\Delta,\ve)$ is a coalgebra in
${}_H^H\mathcal{YD}$. That is to say, $m,\eta,\Delta,\ve$ are morphisms in ${}_H^H\mathcal{YD}$;
\item $\Delta:A\ra A\underline{\ts} A$ is an algebra morphism;
\item $\ve:A\ra \mathbb{C}$ is an algebra morphism;
\item $S$ is the convolution inverse of $id_A\in End(A)$.
\end{enumerate}
\end{definition}
The most important example of a braided Hopf algebra is the braided tensor Hopf algebra defined as follows.
\begin{example}[\cite{AS02}]\label{example}
Let $V\in{}_H^H\mathcal{YD}$ be an $H$-Yetter-Drinfel'd module. There exists a braided Hopf algebra structure on the tensor algebra 
$$T(V)=\bigoplus_{n=0}^\infty V^{\ts n}.$$
\begin{enumerate}
\item The multiplication is the original one on $T(V)$ given by the concatenation.
\item The coalgebra structure is defined on $V$ by: for any $v\in V$, $\Delta(v)=v\ts 1+1\ts v$, $\ve(v)=0$. Then it can be extended to the whole $T(V)$ by the universal property of $T(V)$ as an algebra.
\end{enumerate}
\end{example}

\subsection{Nichols algebras}
Let $V\in{}_H^H\mathcal{YD}$ be a finite dimensional $\mc$-vector space and $T(V)$ be the braided tensor Hopf algebra over $V$ as defined in Example \ref{example}; it is $\mathbb{N}$-graded.
\par
We will recall briefly the definition and the explicit construction of Nichols algebras, which dates back to \cite{Nic78} and is given in \cite{AS02}. Another definition in a dual point of view is given in \cite{Ros98} under the name quantum shuffle algebra and is denoted by $S_\s(V)$. The difference between them is: the construction in \cite{Ros98} is in the graded dual of $T(V)$, so instead of being a quotient object, it will be a sub-object in the graded dual. But they are isomorphic as braided Hopf algebra up to a symmetrization morphism.
\begin{definition}[\cite{AS02}]
A graded braided Hopf algebra $R=\bigoplus_{n=0}^\infty R(n)$ is called a Nichols algebra of $V$ if 
\begin{enumerate}
\item $R(0)\cong\mc$, $R(1)\cong V\in{}_H^H\mathcal{YD}$;
\item $R$ is generated as an algebra by $R(1)$;
\item $R(1)$ is the set of all primitive elements in $R$.
\end{enumerate}
We let $\mathfrak{N}(V)$ denote this braided Hopf algebra.
\end{definition}
\begin{remark}\label{ASconjecture}
It is conjectured by Andruskiewitsch and Schneider in \cite{AS02} that when $R$ is finite dimensional, (3) implies (2).
\end{remark}
\indent
There is a construction of $\mathfrak{N}(V)$ from $T(V)$ as shown in \cite{AS02}: let 
$$T^{\geq 2}(V)=\bigoplus_{n\geq 2}V^{\ts n}$$ 
and $\mathfrak{I}(V)$ be the maximal coideal of $T(V)$ contained in $T^{\geq 2}(V)$. Then $\mathfrak{I}(V)$ is also a two-sided ideal; the Nichols algebra $\mathfrak{N}(V)$ of $V$ can be constructed as $T(V)/\mathfrak{I}(V)$. We let $S$ denote the convolution inverse of $id:\mathfrak{N}(V)\ra\mathfrak{N}(V)$.
\par
For $k\in\mathbb{N}$, let $\mathfrak{N}(V)_k$ denote the subspace of degree $k$ elements in $\mathfrak{N}(V)$. From definition, $\mathfrak{N}(V)_0=\mathbb{C}$ and $\mathfrak{N}(V)_1=V$ is the set of all primitive elements in $\mathfrak{N}(V)$.

\subsection{Nichols algebras of diagonal type}\label{diagonal}
In this subsection, we recall a particular type of Nichols algebra, which will be a good source of examples in our later discussions. A concrete approach can be found in \cite{AS02}.
\par
Let $G$ be an abelian group and $H=\mc[G]$ be its group algebra: it is a commutative and cocommutative Hopf algebra. We let $\widehat{G}$ denote the character group of $G$. Let $V\in{}_H^H\mathcal{YD}$ be a finite dimensional $H$-Yetter-Drinfel'd module and $dimV=n$. Let $T(V)$ denote the braided tensor Hopf algebra and $\mathfrak{N}(V)$ denote the associated Nichols algebra.
\par
As shown in \cite{Nic78} or Remark 1.5 in \cite{AS02}, the category ${}_H^H\mathcal{YD}$ is made of a $G$-graded vector space $V=\bigoplus_{g\in G}V_g$ such that for any $h\in G$ and $v\in V_g$, $h.v\in V_g$. The comodule structure is given by: for $V$ in ${}_H^H\mathcal{YD}$ and $v\in V_g$ in the decomposition above, the comodule structure map $\delta:V\ra H\ts V$ is  $\delta(v)=g\ts v$.
\begin{definition}\label{diagonal}
Let $V\in{}_H^H\mathcal{YD}$ be a finite dimensional $H$-Yetter-Drinfel'd module. $V$ is called of diagonal type if there exists a basis $\{v_1,\cdots,v_n\}$ of $V$, $g_1,\cdots,g_n\in G$ and $\chi_1,\cdots,\chi_n\in\widehat{G}$ such that for any $g\in G$ and $v_i\in V_{g_i}$,
$$g.v_i=\chi_i(g)v_i.$$
\end{definition}
Sometimes, we call $T(V)$ of diagonal type if $V$ is of diagonal type.
\par
In this case, the braiding in ${}_H^H\mathcal{YD}$ is given by: for $V,W\in {}_H^H\mathcal{YD}$,
$$\sigma_{V,W}:V\ts W\ra W\ts V,\ \ \sigma_{V,W}(v\ts w)=(g.w)\ts v$$
for any $g\in G$, $v\in V_g$ and $w\in W$.
\par
In particular, if we choose $V=W$ and $v_1,\cdots,v_n$ be a basis of $V$ as in the definition above, the braiding, when acting on basis elements, is given by: for $1\leq i,j\leq n$,
$$\sigma_{V,V}(v_i\ts v_j)=\chi_j(g_i)v_j\ts v_i.$$
So $\sigma_{V,V}$ is completely determined by the matrix $(\chi_j(g_i))_{1\leq i,j\leq n}$. We denote $q_{ij}=\chi_j(g_i)$ and call $(q_{ij})_{1\leq i,j\leq n}$ the braiding matrix associated to $\s_{V,V}$.
\par
It is convenient to define a bicharacter $\chi$ over $G$ when $G=\mathbb{Z}^n$ to rewrite the braiding above.
\begin{definition}
A bicharacter on an abelian group $A$ is a map $\chi:A\times A\ra\mc^*$ such that:
$$\chi(a+b,c)=\chi(a,c)\chi(b,c),\ \ \chi(a,b+c)=\chi(a,b)\chi(a,c),$$
for any $a,b,c\in A$.
\end{definition}
Suppose that $G=\mathbb{Z}^n$ and $V\in{}_H^H\mathcal{YD}$. Then $V$, $T(V)$ and $\mathfrak{N}(V)$ are all $\mathbb{Z}^n$-graded. Let $v_1,\cdots,v_n$ be a basis of $V$ as in Definition \ref{diagonal}, $\alpha_1,\cdots,\alpha_n$ be a free basis of $\mathbb{Z}^n$ and $deg(v_i)=\alpha_i$ be their grade degrees in $\mathbb{Z}^n$.
\par
If this is the case, a bicharacter over $\mathbb{Z}^n$ can be defined using the braiding matrix: for any $1\leq i,j\leq n$, $\chi:\mathbb{Z}^n\times\mathbb{Z}^n\ra \mathbb{C}^*$ is determined by $\chi(\alpha_i,\alpha_j)=q_{ij}$.

\subsection{Radford's biproduct}\label{biproduct}
Let $A\in{}_H^H\mathcal{YD}$ be a braided Hopf algebra. Then $A\ts H$ admits a Hopf algebra structure from a construction in Radford \cite{Rad85}, which is called the biproduct of $A$ and $H$.
\par
These structures are defined by:

\begin{enumerate}
\item The multiplication is given by the crossed product: for $a,a'\in A$, $h,h'\in H$, 
$$(a\ts h)(a'\ts h')=\sum a(h_{(1)}.a')\ts h_{(2)}h';$$
\item The comultiplication is given by: for $a\in A$ and $h\in H$,
$$\Delta(a\ts h)=\sum (a_{(1)}\ts (a_{(2)})_{(-1)}h_{(1)})\ts ((a_{(2)})_{(0)}\ts h_{(2)});$$
\item The antipode is completely determined by: for $a\in A$ and $h\in H$,
$$S(a\ts h)=\sum (1\ts S_H(h)S_H(a_{(-1)}))(S_A(a_{(0)})\ts 1).$$
\end{enumerate}

\begin{proposition}[Radford]
With structures defined above, $A\ts H$ is a Hopf algebra. We let $A\sharp H$ denote it.
\end{proposition}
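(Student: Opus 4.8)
The plan is to verify the Hopf algebra axioms for $A\sharp H$ directly, organizing the work into four stages: associativity of the smash product, coassociativity of the smash coproduct, the bialgebra compatibility between the two, and the antipode property. Throughout one works in Sweedler notation, using repeatedly that $A$ is an algebra and a coalgebra \emph{in the braided category} ${}_H^H\mathcal{YD}$, which translates into the statements that the $H$-action makes $A$ an $H$-module algebra ($\sum h_{(1)}.a\cdot h_{(2)}.a' = h.(aa')$, $h.1_A=\ve_H(h)1_A$) and that the $H$-coaction makes $A$ an $H$-comodule coalgebra (the dual identities for $\Delta_A$, $\ve_A$ and $\rho$).

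First I would check that $(A\ts H, m, 1_A\ts 1_H)$ is an associative unital algebra. Associativity of the crossed product $(a\ts h)(a'\ts h')=\sum a(h_{(1)}.a')\ts h_{(2)}h'$ follows from coassociativity of $\Delta_H$, associativity of $m_A$ and $m_H$, and the module-algebra identities; the unit axiom is immediate. Dually, I would check that $(A\ts H, \Delta, \ve_A\ts\ve_H)$ is a coassociative counital coalgebra, coassociativity of $\Delta$ following from coassociativity of $\Delta_A$ and $\Delta_H$ together with the comodule-coalgebra compatibility. Both of these stages are routine bookkeeping.

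The core of the argument is showing that $\Delta$ and $\ve$ are algebra morphisms, i.e. that $A\ts H$ is a bialgebra. The identities $\ve((a\ts h)(a'\ts h'))=\ve(a\ts h)\ve(a'\ts h')$, $\Delta(1)=1\ts 1$ and $\ve(1)=1$ are easy; the substantial point is $\Delta((a\ts h)(a'\ts h'))=\Delta(a\ts h)\Delta(a'\ts h')$. Expanding the left side, one must apply $\Delta_A$ to a product $a(h_{(1)}.a')$; by the braided bialgebra axiom for $A$ — that $\Delta_A\colon A\to A\underline{\ts} A$ is an algebra map with respect to the multiplication $(m_A\ts m_A)(id\ts\sigma_{A,A}\ts id)$ on $A\underline{\ts} A$ — this introduces the braiding $\sigma_{A,A}(x\ts y)=\sum x_{(-1)}.y\ts x_{(0)}$. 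Expanding the right side, the crossed product on $(A\ts H)^{\ts 2}$ introduces factors $(a_{(2)})_{(-1)}h_{(1)}$ acting through the $H$-module structure on other tensorands. The claim — which is the whole content of Radford's theorem — is that these two sources of twisting match precisely, and the identity effecting the match is exactly the Yetter-Drinfel'd compatibility condition relating the $H$-action and $H$-coaction on $A$. I expect this reconciliation, inserting the Yetter-Drinfel'd condition at the right place while simultaneously invoking the module-algebra and comodule-coalgebra axioms, to be the main obstacle and the step demanding the most careful manipulation.

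Finally, with $A\ts H$ known to be a bialgebra, I would verify that $S(a\ts h)=\sum (1\ts S_H(h)S_H(a_{(-1)}))(S_A(a_{(0)})\ts 1)$ is a two-sided convolution inverse of $\mathrm{id}$. It suffices to check $S*\mathrm{id}=\eta\ve=\mathrm{id}*S$ on the algebra generators $a\ts 1$ and $1\ts h$, using that $S_H$ is the antipode of $H$, that $S_A$ is the braided antipode of $A$, and the module/comodule axioms; the general case follows by a standard argument once one observes that $S$, built as a product in reversed order, is an algebra anti-homomorphism. This completes the verification. (One may alternatively appeal to Radford's original computation in \cite{Rad85}, of which the above is the standard reorganization.)
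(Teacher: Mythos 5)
The paper does not prove this statement at all: it is quoted as a known theorem of Radford, with the reference \cite{Rad85} standing in for the verification. Your proposal is therefore not comparable to a proof in the paper, but as a reconstruction of the standard argument it is sound: you correctly identify the four stages (crossed-product associativity, smash-coproduct coassociativity, the bialgebra compatibility, the antipode), and you correctly locate the crux, namely that multiplicativity of $\Delta$ forces a confrontation between the braiding $\sigma_{A,A}(x\ts y)=\sum x_{(-1)}.y\ts x_{(0)}$ coming from $\Delta_A\colon A\ra A\underline{\ts}A$ and the twisting by $(a_{(2)})_{(-1)}h_{(1)}$ in the crossed product, reconciled exactly by the Yetter--Drinfel'd condition. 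Two caveats. First, the central computation is only announced, not carried out; as written this is a plan, not a proof, and the paper gives you nothing to lean on here since it omits the argument entirely. Second, your reduction of the antipode axiom to the generators $a\ts 1$ and $1\ts h$ is legitimate only after you have \emph{proved} that the proposed $S$ is an algebra anti-homomorphism (the set of elements on which $S\ast\mathrm{id}=\eta\ve=\mathrm{id}\ast S$ holds is closed under multiplication precisely because $\Delta$ is an algebra map and $S$ reverses products); that anti-homomorphism property is itself a nontrivial computation of the same flavour as the bialgebra step, so it should be listed as a genuine task rather than an observation. With those two computations supplied, your route is exactly Radford's.
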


\subsection{Quantum doubles and Heisenberg doubles}
We recall first the definition of a generalized Hopf pairing, which gives duality between Hopf algebras.
\begin{definition}[\cite{KRT94}]
Let $A$ and $B$ be two Hopf algebras with invertible antipodes. A
generalized Hopf pairing between $A$ and $B$ is a bilinear form
$\vp:A\times B\ra\mathbb{C}$ such that:
\begin{enumerate}
\item For any $a\in A$, $b,b'\in B$,
$\vp(a,bb')=\sum\vp(a_{(1)},b)\vp(a_{(2)},b')$;
\item For any $a,a'\in A$, $b\in B$,
$\vp(aa',b)=\sum\vp(a,b_{(2)})\vp(a',b_{(1)})$;
\item For any $a\in A$, $b\in B$,
$\vp(a,1)=\ve(a),\ \ \vp(1,b)=\ve(b)$.
\end{enumerate}
\end{definition}
\begin{remark}
From the uniqueness of the antipode and conditions (1)-(3) above, we have: for any $a\in A$, $b\in B$,
$\vp(S(a),b)=\vp(a,S^{-1}(b))$.
\end{remark}
\indent
Starting with a generalized Hopf pairing between two Hopf algebras, we can define the quantum double and the Heisenberg double of them, which will be essential in our later construction of differential algebras of Nichols algebras.

\begin{definition}[\cite{KRT94}]
Let $A$, $B$ be two Hopf algebras with invertible antipodes and $\vp$ be
a generalized Hopf pairing between them. The quantum double $D_\vp(A,B)$ is defined by:
\begin{enumerate}
\item As a vector space, it is $A\ts B$;
\item As a coalgebra, it is the tensor product of coalgebras $A$ and $B$;
\item As an algebra, the multiplication is given by:
$$(a\ts b)(a'\ts b')=\sum \vp(S^{-1}(a_{(1)}'),b_{(1)})\vp(a_{(3)}',b_{(3)})aa_{(2)}'\ts b_{(2)}b'.$$
\end{enumerate}
\end{definition}

Then we construct the Heisenberg double
of $A$ and $B$: it is a crossed product of them where the module algebra type action of $A$ on $B$ is given by the Hopf pairing.
\begin{definition}[\cite{Lu94}]
The Heisenberg double $H_\vp(A,B)$ of $A$ and $B$ is an algebra defined as follows:
\begin{enumerate}
\item As a vector space, it is $B\ts A$ and we let $b\sharp a$ denote a pure tensor;
\item The product is given by: for $a,a'\in A$, $b,b'\in B$,
$$(b\sharp a)(b'\sharp a')=\sum \vp(a_{(1)},b_{(1)}')bb_{(2)}'\sharp a_{(2)}a'.$$
\end{enumerate}
\end{definition}

\section{Dynkin operators and their properties}
In this section, we will define Dynkin operators in the group algebras of braid groups. The definition of these operators is motivated by the iterated brackets in Lie algebras which are used by Dynkin in the proof of the Dynkin-Wever-Spechet theorem for characterizing elements in free Lie algebras (for example, see \cite{Reu93}).
\par
As will be shown in this section, Dynkin operators have good properties under the convolution product, which generalizes the corresponding result in free Lie algebras. This will be used to detect primitive elements later.

\subsection{Definition of Dynkin operators}\label{Dynkin}
We suppose that $n\geq 2$ is an integer.
\par
Let $\mathfrak{S}_n$ denote the symmetric group: it acts on an alphabet with $n$ letters by permuting their positions. It can be generated by the set of transpositions $\{s_i=(i,i+1)|\ 1\leq i\leq n-1\}$. 
\par
Let $\mathfrak{B}_n$ be the braid group of $n$ strands, it is defined by generators $\s_i$ for $1\leq i\leq n-1$ and relations:
$$\s_i\s_j=\s_j\s_i,\ \ \text{for}\ |i-j|\geq 2;\ \ \s_i\s_{i+1}\s_i=\s_{i+1}\s_i\s_{i+1},\ \ \text{for}\ 1\leq i\leq n-2.$$ 
Let $\pi:\mathfrak{B}_n\ra\mathfrak{S}_n$ be the canonical surjection which maps $\s_i\in\mathfrak{B}_n$ to $s_i=(i,i+1)\in\mathfrak{S}_n$.
\par
We consider the group $\mathfrak{S}_n^\pm=\mz/2\mz\times\mathfrak{S}_n$, where $\mz/2\mz=\{\pm 1\}$ is the signature.
We are going to define a subset $\pp_{i,j}\subset\mathfrak{S}_n^\pm$ by induction on $|i-j|$ for $1\leq i\leq j\leq n$.
\par
We omit the signature $1$. Define $\pp_{i,i}=\{(1)\}$, $\pp_{i,i+1}=\{(1), -(i,i+1)\}$, and 
$$\pp_{i,j}=\pp_{i+1,j}\cup (\pp_{i,j-1}\circ -(i,j,j-1,\cdots,i+1)),$$
where $\circ$ is the product in $\mc[\mathfrak{S}_n]$.
\par 
Moreover, we define 
$$P_{i,j}=\sum_{(\ve,\omega)\in\mathcal{P}_{i,j}}\ve\omega\in\mathbb{C}[\mathfrak{S}_n].$$
\indent
Let $\sigma\in\mathfrak{S}_n$ and $\sigma=s_{i_1}\cdots s_{i_r}$ be a reduced expression of $\sigma$. It is possible to define a corresponding lifted element $T_\sigma=\s_{i_1}\cdots\s_{i_r}\in\mathfrak{B}_n$. This gives a linear map $T:\mathbb{C}[\mathfrak{S}_n]\ra\mathbb{C}[\mathfrak{B}_n]$ called Matsumoto section. For $0\leq k\leq n$, let $\mathfrak{S}_{k,n-k}\subset\mathfrak{S}_n$ denote the set of $(k,n-k)$-shuffles in $\mathfrak{S}_n$ defined by:
$$\mathfrak{S}_{k,n-k}=\{\sigma\in\mathfrak{S}_n|\ \sigma^{-1}(1)<\cdots<\s^{-1}(k),\ \s^{-1}(k+1)<\cdots<\s^{-1}(n)\}.$$

\begin{example} 
We explain the definition of these elements $\pp_{i,j}$ in an example for $\mathfrak{S}_4$:\\
$$\pp_{1,2}=\{(1),-(12)\},\ \ \pp_{1,3}=\pp_{2,3}\cup(\pp_{1,2}\circ -(132))=\{(1),-(23),-(132),(13)\},$$
$$\pp_{2,4}=\{(1),-(34),-(243),(24)\},$$
\begin{eqnarray*}
\pp_{1,4} &=& \pp_{2,4}\cup(\pp_{1,3}\circ -(1432))\\
&=& \{(1),-(34),-(243),(24),-(1432),(142),(1423),-(14)(23)\}.
\end{eqnarray*}
These elements $\mathcal{P}_{i,j}$ and $P_{i,j}$ come from iterated brackets: 
$$[a,[b,[c,d]]]=abcd-abdc-acdb+adcb-bcda+bdca+cdba-dcba.$$
When $\mathfrak{S}_4^\pm$ acts on letters $abcd$ by permuting their position and then multiplying by the signature, an easy computation gives:
$$[a,[b,[c,d]]]=T_{P_{1,4}}(abcd).$$
\end{example}
\begin{definition}
We call these $P_{i,j}$ Dynkin operators in $\mc[\mathfrak{S}_n]$ and corresponding elements $T_{P_{i,j}}$ Dynkin operators in $\mc[\mathfrak{B}_n]$.
\end{definition}
\indent
For $\sigma\in\mathfrak{S}_n$, let $l(\sigma)$ denote the length of $\sigma$. It is exactly the number of generators appearing in any reduced expression of $\sigma$.
\begin{remark}
In general, the Matsumoto section $T:\mathfrak{S}_n\ra \mathfrak{B}_n$ is not a group homomorphism, but we have the following property:
for $w,w'\in\mathfrak{S}_n$, if $l(ww')=l(w)+l(w')$, then $T_wT_{w'}=T_{ww'}$.
\end{remark}

\begin{lemma}\label{perm}
Let $w\in\mathcal{P}_{1,k}$ and $\sigma\in\mathfrak{S}_{k,n-k}$. Then $T_{w\sigma}=T_w T_\sigma$.
\end{lemma}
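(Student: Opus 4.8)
The plan is to reduce the claim $T_{w\sigma}=T_w T_\sigma$ to the length-additivity property of the Matsumoto section recalled just above, namely that $T_{uv}=T_uT_v$ whenever $l(uv)=l(u)+l(v)$. So the real content is to prove that for every $w\in\mathcal{P}_{1,k}$ (more precisely, for the underlying permutation of every signed element of $\mathcal{P}_{1,k}$) and every $(k,n-k)$-shuffle $\sigma$ one has $l(w\sigma)=l(w)+l(\sigma)$. First I would unwind the inductive definition $\mathcal{P}_{1,k}=\mathcal{P}_{2,k}\cup(\mathcal{P}_{1,k-1}\circ -(1,k,k-1,\dots,2))$ to get an explicit description of the permutations occurring in $\mathcal{P}_{1,k}$: one checks by induction that each such $w$ is supported on $\{1,\dots,k\}$, fixes $k+1,\dots,n$, and is of a very restricted form — essentially $w$ sends the block $\{1,\dots,k\}$ to a "descending then ascending" pattern, so that $w^{-1}(1)>w^{-1}(2)>\cdots$ up to some point and then increases; in particular $w$ itself is (the inverse of) a shuffle-type permutation of the first $k$ letters. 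The clean way to record this is via the inversion set: I would show that $w\in\mathcal{P}_{1,k}$ has all its inversions inside $\{1,\dots,k\}^2$.

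Next I would use the standard criterion that $l(w\sigma)=l(w)+l(\sigma)$ if and only if the inversion sets are "compatible", i.e. $\sigma^{-1}$ maps no inversion of $w$ to a non-inversion — concretely, $l(w\sigma)=l(w)+l(\sigma)$ iff for all $i<j$ with $\sigma(i)<\sigma(j)$ we have $w(\sigma(i))<w(\sigma(j))$ fails only... rather, the cleanest formulation: $\mathrm{Inv}(w\sigma)=\sigma^{-1}(\mathrm{Inv}(w))\ \triangle\ \mathrm{Inv}(\sigma)$ and additivity holds iff $\mathrm{Inv}(\sigma)\cap\sigma^{-1}(\mathrm{Inv}(w))=\emptyset$. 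Since $\sigma$ is a $(k,n-k)$-shuffle, $\sigma$ is increasing on $\{1,\dots,k\}$ and increasing on $\{k+1,\dots,n\}$; an inversion $(a,b)$ of $\sigma$ therefore has $a\le k<b$, so $\sigma(a)\le k$ or — being careful about which convention — in any case the values $\sigma(a),\sigma(b)$ that $\sigma$ transposes out of order include one from the first block and one from the second. Because $w$ acts only on $\{1,\dots,k\}$ and fixes everything $>k$, I would argue that $\sigma^{-1}(\mathrm{Inv}(w))$ lives entirely among pairs of positions both mapping into the first block, which are exactly the positions $\sigma^{-1}(\{1,\dots,k\})=\{1,\dots,k\}$ (shuffles fix the initial segment setwise, though not pointwise — here I must be slightly careful and instead note $\sigma$ restricted to $\{1,\dots,k\}$ is order-preserving so it cannot create or destroy order relations among those values), whence the two inversion sets are disjoint.

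I would then assemble these pieces: by the inversion-set computation $l(w\sigma)=l(w)+l(\sigma)$, and by the Matsumoto length-additivity remark this yields $T_{w\sigma}=T_wT_\sigma$. Finally, since $w$ really carries a sign $\varepsilon\in\{\pm1\}$ and Dynkin operators are $\mathbb{C}$-linear combinations, one extends by linearity: the sign is just a scalar and passes through, so the identity holds at the level of the elements $T_{P_{1,k}}$ as well, which is what will be needed in the sequel.

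The main obstacle I anticipate is the bookkeeping in the inductive step: showing that \emph{every} element produced by the recursion $\mathcal{P}_{1,k}=\mathcal{P}_{2,k}\cup(\mathcal{P}_{1,k-1}\circ -(1,k,\dots,2))$ has its inversion set confined to $\{1,\dots,k\}$ and has the precise shape needed for compatibility with an arbitrary $(k,n-k)$-shuffle. Both branches of the union must be handled: for $\mathcal{P}_{2,k}$ one uses the induction hypothesis on a shifted interval, and for the composite $w'\circ(1,k,k-1,\dots,2)$ with $w'\in\mathcal{P}_{1,k-1}$ one must verify that postcomposing with the cycle $(1,k,k-1,\dots,2)$ keeps the inversions inside $\{1,\dots,k\}$ and does not wreck the order-compatibility with shuffles — this is where a careful choice of the inductive invariant (stated in terms of the inversion set or equivalently the one-line notation on $\{1,\dots,k\}$) earns its keep. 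Everything else is a routine application of the standard combinatorics of the length function on $\mathfrak{S}_n$.
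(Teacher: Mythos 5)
Your proposal is correct and follows essentially the same route as the paper: establish $l(w\sigma)=l(w)+l(\sigma)$ by counting inversions, using that $w$ only permutes the first $k$ positions while the shuffle $\sigma$ preserves the order of those positions, and then invoke the length-additivity of the Matsumoto section. The only fact about $\mathcal{P}_{1,k}$ actually needed is that its elements fix $k+1,\dots,n$ (immediate from the recursion), so the finer "shape" analysis you anticipate as the main obstacle is unnecessary.
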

\begin{proof}
Recall that the length of an element in $\mathfrak{S}_n$ equals to the number of inversions of its action on $\{1,\cdots,n\}$. As $w$ permutes only the first $k$ positions, and the $(k,n-k)$-shuffle $\sigma$ preserves the order of first $k$ positions, the number of inversions of $w\sigma$ is the sum of those for $w$ and $\sigma$, which means that $l(w\sigma)=l(w)+l(\sigma)$ and then the lemma comes from the remark above.
\end{proof}

The following lemma is helpful for the understanding of the operator $P_{i,j}$ and for our further applications.

\begin{lemma}\label{decomposition:pn}
For $n\geq 2$ and $1\leq i<j\leq n$, the following identity holds in $\mc[\fb_n]$:
$$T_{P_{i,j}}=(1-\s_{j-1}\s_{j-2}\cdots\s_i)(1-\s_{j-1}\s_{j-2}\cdots\s_{i+1})\cdots(1-\s_{j-1}).$$
\end{lemma}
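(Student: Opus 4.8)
The plan is to argue by strong induction on $d=j-i\geq 1$, showing that both sides of the claimed identity satisfy one and the same recursion and that they agree when $d=1$. For $d=1$ the left-hand side is $T_{P_{i,i+1}}=T_{(1)}-T_{(i,i+1)}=1-\s_i$ by linearity of the Matsumoto section, which is exactly the single-factor right-hand side $1-\s_{j-1}=1-\s_i$.

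For the inductive step set $u=\s_{j-1}\s_{j-2}\cdots\s_i\in\mathfrak{B}_n$ and let $c=(i,j,j-1,\cdots,i+1)$ be the cycle occurring in the defining recursion $\mathcal{P}_{i,j}=\mathcal{P}_{i+1,j}\cup(\mathcal{P}_{i,j-1}\circ -c)$. I would first record three elementary facts. (a) A straightforward induction on $j-i$ using the defining recursion shows that every element of $\mathcal{P}_{i,j}$ is supported on $\{i,i+1,\cdots,j\}$, and that in this recursion the two pieces $\mathcal{P}_{i+1,j}$ and $\mathcal{P}_{i,j-1}\circ -c$ are disjoint as subsets of $\mz/2\mz\times\mathfrak{S}_n$, since a permutation in the first fixes the position $i$ whereas one in the second sends $i$ to $j$. (b) A short inversion count gives $\pi(u)=c$, $l(u)=j-i$, and that $\s_{j-1}\cdots\s_i$ is a reduced word, hence $T_c=u$. (c) For every $\omega\in\mathcal{P}_{i,j-1}$ one has $l(\omega c)=l(\omega)+l(c)$: by (a), $\omega$ fixes $j$ and preserves $\{i,\cdots,j-1\}$, while all inversions of $c$ involve the position $i$, so an inversion count exactly in the spirit of the proof of Lemma \ref{perm} applies. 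By the multiplicativity of the Matsumoto section on length-additive products, (c) gives $T_{\omega c}=T_\omega T_c$, and applying the linear map $T$ to the defining recursion and using the disjointness from (a) yields
$$T_{P_{i,j}}=T_{P_{i+1,j}}-T_{P_{i,j-1}}\,u .$$

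It remains to check that $A_{i,j}:=(1-\s_{j-1}\cdots\s_i)(1-\s_{j-1}\cdots\s_{i+1})\cdots(1-\s_{j-1})$ obeys the same recursion $A_{i,j}=A_{i+1,j}-A_{i,j-1}\,u$. The decisive computation is the braid identity $u\s_m=\s_{m-1}u$ for $i+1\leq m\leq j-1$, which follows by sliding $\s_m$ past the generators of $u$ that commute with it and applying $\s_m\s_{m-1}\s_m=\s_{m-1}\s_m\s_{m-1}$ once; iterating it gives $u(\s_{j-1}\cdots\s_k)=(\s_{j-2}\cdots\s_{k-1})u$ for $i+1\leq k\leq j-1$, whence $u\,A_{i+1,j}=A_{i,j-1}\,u$. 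Since the first factor of $A_{i,j}$ is $1-u$ and the remaining factors form $A_{i+1,j}$, this gives $A_{i,j}=(1-u)A_{i+1,j}=A_{i+1,j}-A_{i,j-1}\,u$. Feeding in the induction hypotheses $T_{P_{i+1,j}}=A_{i+1,j}$ and $T_{P_{i,j-1}}=A_{i,j-1}$ then closes the induction.

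The step I expect to require the most care is the passage from the recursion for $\mathcal{P}_{i,j}$ to the one for $T_{P_{i,j}}$: the Matsumoto section is not a group homomorphism, so one must genuinely verify the length-additivity $l(\omega c)=l(\omega)+l(c)$ for $\omega\in\mathcal{P}_{i,j-1}$ (where the support property of the elements of $\mathcal{P}_{i,j-1}$ is what makes it true), together with the disjointness of the two pieces of the union, so that no cancellation occurs when $T$ is applied. By contrast, the braid identity $u\s_m=\s_{m-1}u$ and the base case are routine manipulations.
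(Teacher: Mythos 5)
Your proof is correct and follows essentially the same route as the paper: induction on $j-i$ using the defining recursion of $\mathcal{P}_{i,j}$, together with the shift identity (your $uA_{i+1,j}=A_{i,j-1}u$, which is the braid-group lift of the paper's $P_{1,n-1}\circ(1,n,\cdots,2)=(1,n,\cdots,2)\circ P_{2,n}$). The only difference is that you make explicit the disjointness and length-additivity needed to push the recursion through the Matsumoto section, which the paper leaves implicit.
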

\begin{proof}
It suffices to show it for $i=1$ and $j=n$. We prove it by induction on $n$. The case $n=2$ is clear.
\par
Suppose that the lemma holds for $n-1$. From the definition of $P_{1,n}$, 
$P_{1,n}=P_{2,n}-P_{1,n-1}\circ(1,n,\cdots,2)$, so $P_{1,n}=P_{2,n}-(1,n,\cdots,2)\circ P_{2,n}$ and then
$$T_{P_{1,n}}=(1-\s_{n-1}\s_{n-2}\cdots\s_1)T_{P_{2,n}}.$$
From the induction hypothesis, 
$$T_{P_{2,n}}=(1-\s_{n-1}\s_{n-2}\cdots\s_2)\cdots(1-\s_{n-1}\s_{n-2})(1-\s_{n-1}),$$
which finishes the proof.
\end{proof}

\subsection{Properties of Dynkin operators}\label{3.2}
We treat $T(V)$ as a braided Hopf algebra as in Section \ref{section2}.
\par
At first, we define Dynkin operators on $T(V)$.
\begin{definition}
We define a graded endomorphism $
\Phi\in\bigoplus_{n=0}^\infty End(V^{\ts n})$ by: $\Phi(1)=0$ and for $x\in V^{\ts n}$ with $n\geq 1$,
$$\Phi(x)=T_{P_{1,n}}(x)\in V^{\ts n}.$$
It can be viewed as a linear map $\Phi:T(V)\ra T(V)$ and is called a Dynkin operator on $T(V)$.
\end{definition}

Using this notation, we can deduce from Lemma \ref{decomposition:pn} the following inductive characterization of $\Phi$:
for $v\in V$ and $w\in V^{\ts n}$, we have:
\begin{equation}\label{eqn1}
\Phi(vw)=\left\{\begin{array}{cc}vw,\ \ \  & \text{if}\ w\in\mathbb{C}, \\(1-T_{(1,n+1,\cdots,2)})(v\Phi(w)),\ \ \  & \text{if}\ n\geq 1.  \end{array}\right.
\end{equation}
Moreover, the following identity is clear:
\begin{equation}\label{eqn2}
T_{(1,n+1,\cdots,2)}(v\Phi(w))=(\Phi|_{V^{\ts n}}\ts id)(T_{(1,n+1,\cdots,2)}(vw)).
\end{equation}

The following proposition can be viewed as a generalization of a classical result for free Lie algebras in \cite{Reu93}. As $T(V)$ is a braided Hopf algebra, we let $\ast$ denote the convolution product in $End(T(V))$.
\begin{theorem}\label{theorem1}
Let $x\in V^{\ts n}$. Then
$$(\Phi\ast id)(x)=nx.$$
\end{theorem}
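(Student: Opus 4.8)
The plan is to prove the identity $(\Phi\ast id)(x)=nx$ for $x\in V^{\ts n}$ by induction on $n$, using the inductive characterization \eqref{eqn1} of $\Phi$ together with compatibility of the braided coproduct with the multiplication $\ast$. For $n=1$ the claim is immediate: $\Phi(v)=v$ and $\Delta(v)=v\ts 1+1\ts v$, so $(\Phi\ast id)(v)=\Phi(v)\cdot 1+\Phi(1)\cdot v+ (\text{the two mixed terms vanish since }\Phi(1)=0)=v$, and this matches $1\cdot v$. So the base case holds.

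For the inductive step, I would write a generic element of $V^{\ts n}$ as $vw$ with $v\in V$ and $w\in V^{\ts(n-1)}$ (extending by linearity), and compute $(\Phi\ast id)(vw)$ by expanding the coproduct $\Delta(vw)=\Delta(v)\Delta(w)$ in $T(V)\underline{\ts}T(V)$. The key structural fact is that the lowest and highest homogeneous components of $\Delta(vw)$ are controlled, so that $(\Phi\ast id)(vw)=\sum \Phi(x_{(1)})x_{(2)}$ splits into a term where $x_{(2)}=1$ (contributing $\Phi(vw)$), a term where $x_{(1)}\in V$ (contributing, after the braiding is accounted for, something that relates to $w$ via the induction hypothesis on $\Phi\ast id$ applied in degree $n-1$), and intermediate terms. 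I would use \eqref{eqn1} to rewrite $\Phi(vw)=(1-T_{(1,n,\dots,2)})(v\Phi(w))$ and \eqref{eqn2} to move $\Phi$ past the cyclic braiding. The combinatorial heart is to recognize that the defect term $T_{(1,n,\dots,2)}(v\Phi(w))$ produced by $\Phi(vw)$ is exactly cancelled, up to the expected multiple of $vw$, by the contribution of the degree-$(1,n-1)$ part of the coproduct, where one invokes $(\Phi\ast id)(w)=(n-1)w$.

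More concretely, I expect the computation to run as follows. Writing $\Delta$ on $T(V)$ in terms of shuffle-type sums indexed by $\mathfrak{S}_{k,n-k}$ (using the Matsumoto lifts $T_\sigma$ as in the setup), Lemma \ref{perm} guarantees that $\Phi$ applied to the left tensor leg interacts cleanly with the shuffle permutations, since $\Phi$ on $V^{\ts k}$ is $T_{P_{1,k}}$ and $P_{1,k}$ only permutes the first $k$ positions. This lets me factor $(\Phi\ast id)(vw)$ as a sum over $k$ of $(T_{P_{1,k}}\ts id)$ applied to the $k$-th shuffle component; the $k=n$ term is $\Phi(vw)$ and the $k=n-1$ term, after peeling off the braiding, is where the induction hypothesis enters. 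The terms $1\le k\le n-2$ must be shown to reorganize into $(n-1)vw$ minus exactly the defect term from $\Phi(vw)$. I anticipate the main obstacle is precisely this bookkeeping: tracking how the cyclic braiding operator $T_{(1,n,\dots,2)}$ threads through the shuffle decomposition of $\Delta(w)$ and verifying the cancellation is clean over a general braiding rather than the symmetric flip. The associativity/naturality of the braiding (the hexagon axioms, encoded in the braid relations satisfied by the $\s_i$) is what ultimately makes the telescoping work, but making that fully rigorous term-by-term is the delicate part. An alternative, perhaps cleaner route would be to prove the operator identity $\Phi\ast id = \Phi + (\text{degree}) - (\text{shift of }\Phi\ast id)$ directly in $\bigoplus_n \mathrm{End}(V^{\ts n})$ using only \eqref{eqn1}–\eqref{eqn2} and the explicit formula for $\Delta$ restricted to the relevant bidegrees, avoiding any appeal to the full shuffle expansion; I would try that first and fall back on the shuffle computation if the intermediate degrees resist.
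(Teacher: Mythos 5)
Your overall strategy coincides with the paper's: induction on $n$, peel one letter $v$ off $vw$, expand $\Delta(vw)=\Delta(v)\Delta(w)$ in the braided tensor product, and combine \eqref{eqn1}, \eqref{eqn2} and the induction hypothesis. The base case is fine. But the cancellation as you have set it up will not close, for two reasons. First, you apply \eqref{eqn1} only to the single term $\Phi(vw)$ (the component with $x_{(2)}=1$), whereas it must be applied to \emph{every} term $\Phi(vw')$ arising from $\Delta(w)=1\ts w+\sum w'\ts w''$: each such application produces its own defect $T_{(1,l(w')+1,\dots,2)}(v\Phi(w'))\,w''$, and it is the sum $\sum v\Phi(w')w''$ of the non-defect parts that the induction hypothesis, in the form $\sum\Phi(w')w''=(\Phi\ast id)(w)=(n-1)w$, converts into $(n-1)vw$ --- the hypothesis is not invoked on any single bidegree component of $\Delta(vw)$. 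Second, your pairing of terms is misattributed: the defect produced by $\Phi(vw)$ is cancelled by the bidegree-$(n-1,1)$ contribution $(1\ts v)(w\ts 1)$, whose image under $\Phi\ts id$ is exactly $T_{(1,n,\dots,2)}(v\Phi(w))$ by \eqref{eqn2}, not by the $(1,n-1)$ part; more generally each defect $T_{(1,l(w')+1,\dots,2)}(v\Phi(w'))w''$ cancels precisely the contribution of $(1\ts v)(w'\ts w'')$, i.e.\ the terms of $\Delta(vw)$ in which $v$ has been braided into the right tensor leg.

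The repair is purely organizational: group $\Delta(vw)=(v\ts 1+1\ts v)\Delta(w)$ according to whether $v$ lands in the left or the right leg, rather than by output bidegree. The left-leg terms give $\Phi(v)w+\sum\Phi(vw')w''$; expanding each $\Phi(vw')$ by \eqref{eqn1} yields $(n-1)vw$ (by the induction hypothesis) minus the sum of the defects; the right-leg terms, rewritten by \eqref{eqn2}, are exactly those defects. Everything then telescopes to $vw+(n-1)vw=nvw$, with no shuffle expansion, no appeal to Lemma \ref{perm}, and no hexagon bookkeeping: the only inputs are that $\Delta$ is an algebra map for $\underline{\ts}$ together with the identities \eqref{eqn1} and \eqref{eqn2}. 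Your proposed fallback to the full shuffle decomposition is therefore unnecessary, and your ``alternative operator-identity route'' is, once the terms are grouped this way, exactly the paper's argument.
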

\begin{proof}
The proof is given by induction on the degree $n$. The case $n=1$ is trivial.
\par
Let $n\geq 2$. Suppose that the theorem holds for all elements of degree $n-1$. It suffices to show that for any $v\in V$ and $v\in V^{\ts n-1}$,
$$(\Phi\ast id)(vw)=nvw.$$
We write $\Delta(w)=1\ts w+\sum w'\ts w''$ where $w'\in\ker\ve=\bigoplus_{k=1}^\infty
V^{\ts k}$. For a homogeneous element $x\in T(V)$, we let $l(x)$ denote its degree. As $\Delta$ is an algebra morphism, with these notations,
$$\Delta(vw)=v\ts w+1\ts vw+\sum vw'\ts w''+(1\ts v)(\sum w'\ts w''),$$
and then
$$(\Phi\ast id)(vw)=\Phi(v)w+\sum\Phi(vw')w''+(\Phi|_{V^{\ts l(w')}}\ts id)(T_{(1,l(w')+1,\cdots,2)}(vw'))w''.$$
By the induction hypothesis, 
$$(n-1)w=(\Phi\ast id)(w)=\sum\Phi(w')w'',$$
then after (\ref{eqn1}),
$$\sum\Phi(vw')w''=\sum v\Phi(w')w''-\sum T_{(1,l(w')+1,\cdots,2)}(v\Phi(w'))w''.$$
In this formula, the first term is $(n-1)vw$ and the second one, after (\ref{eqn2}), equals to
$$(\Phi|_{V^{\ts l(w')}}\ts id)(T_{(1,l(w')+1,\cdots,2)}(vw'))w''.$$
Combining these formulas terminates the proof of the theorem.
\end{proof}
To write down the formula in a more compact form, we define the number operator:
\begin{definition}
The number operator $\mathcal{N}:T(V)\ra T(V)$ is the linear map defined by: $\mathcal{N}(1)=0$ and for any $x\in V^{\ts n}$ with $n\geq 1$,
$$\mathcal{N}(x)=nx.$$
\end{definition}
So the formula in Theorem \ref{theorem1} can be written as
$$(\Phi\ast id) (x)=\mathcal{N}(x).$$
As $S$ is the convolution inverse of the identity map, we have:
\begin{corollary}\label{NS}
Let $x\in T(V)$. Then:
$$(\mathcal{N}\ast S)(x)=\Phi(x).$$
\end{corollary}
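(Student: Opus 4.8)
The statement to prove is Corollary \ref{NS}: that $(\mathcal{N}\ast S)(x) = \Phi(x)$ for all $x\in T(V)$. The plan is to derive this directly from Theorem \ref{theorem1} by manipulating convolution products in $End(T(V))$, using that $S$ is by definition the convolution inverse of $id$ (so $id\ast S = S\ast id = \eta\ve$, the unit of the convolution algebra $End(T(V))$). First I would record that Theorem \ref{theorem1} says precisely $\Phi\ast id = \mathcal{N}$ as elements of $End(T(V))$, since both sides agree on each homogeneous component $V^{\ts n}$ (and on $V^{\ts 0}=\mc$ both send $1$ to $0$, matching $\Phi(1)=0$ and $\mathcal{N}(1)=0$). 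Then convolving both sides on the right with $S$ gives $(\Phi\ast id)\ast S = \mathcal{N}\ast S$.

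The key step is associativity of the convolution product $\ast$ on $End(T(V))$, which holds because $T(V)$ is a (braided) bialgebra: $\ast$ is defined via the coproduct $\Delta$ and the multiplication $m$, and coassociativity of $\Delta$ together with associativity of $m$ yield associativity of $\ast$. Granting this, $(\Phi\ast id)\ast S = \Phi\ast(id\ast S) = \Phi\ast(\eta\ve)$. Finally I would check that $\eta\ve$ is the two-sided identity for $\ast$, i.e. $\Phi\ast(\eta\ve) = \Phi$: unwinding the definition, $(\Phi\ast(\eta\ve))(x) = \sum \Phi(x_{(1)})\,\eta\ve(x_{(2)}) = \Phi\big(\sum x_{(1)}\ve(x_{(2)})\big) = \Phi(x)$ by the counit axiom. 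Stringing these equalities together gives $\mathcal{N}\ast S = \Phi$, which is the claim.

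I do not anticipate a genuine obstacle here; this is a formal consequence of Theorem \ref{theorem1} and the general fact that $(End(T(V)),\ast)$ is a monoid with identity $\eta\ve$ in which $S$ inverts $id$. The only point requiring a word of care is that $\Phi$, $\mathcal{N}$, $id$ and $S$ all genuinely lie in $End(T(V))$ (a priori $\Phi$ and $\mathcal{N}$ are only defined componentwise, but they assemble into honest linear endomorphisms, as already noted when $\Phi$ was introduced), so that the convolution manipulations take place in a single associative algebra. With that observed, the proof is a two-line computation: $\mathcal{N}\ast S = (\Phi\ast id)\ast S = \Phi\ast(id\ast S) = \Phi\ast\eta\ve = \Phi$.
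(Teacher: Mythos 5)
Your argument is correct and is exactly the paper's (implicit) proof: the paper deduces the corollary in one line from Theorem \ref{theorem1} and the fact that $S$ is the convolution inverse of $id$, i.e.\ $\mathcal{N}\ast S=(\Phi\ast id)\ast S=\Phi\ast(\eta\ve)=\Phi$. Your additional checks of associativity of $\ast$ and of $\eta\ve$ being the convolution unit are sound and merely make explicit what the paper leaves unsaid.
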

As an application of Corollary \ref{NS}, we may descend $\Phi$ from braided tensor Hopf algebra $T(V)$ to Nichols algebra $\mathfrak{N}(V)$.
\begin{proposition}
$\Phi(\mathfrak{I}(V))\subset \mathfrak{I}(V)$, so $\Phi$ induces a linear map $\Phi:\mathfrak{N}(V)\ra\mathfrak{N}(V)$.
\end{proposition}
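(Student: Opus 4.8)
The plan is to use the convolution identity $\Phi=\mathcal{N}\ast S$ from Corollary \ref{NS}, where $S$ denotes the antipode of the braided Hopf algebra $T(V)$, together with the structure of $\mathfrak{I}(V)$ as a graded Hopf ideal. Being the kernel of the graded braided Hopf algebra projection $T(V)\ra\mathfrak{N}(V)$, the space $\mathfrak{I}(V)$ is homogeneous, is a two-sided ideal, is a coideal, and is stable under $S$; all four properties enter the argument.

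First I would fix $x\in\mathfrak{I}(V)$ and expand: writing $\Delta(x)=\sum x_{(1)}\ts x_{(2)}$ for the coproduct of $T(V)$ and taking the product below by concatenation, Corollary \ref{NS} gives $\Phi(x)=\sum\mathcal{N}(x_{(1)})S(x_{(2)})$. By the coideal property I can write $\Delta(x)=\sum_i a_i\ts b_i+\sum_j c_j\ts d_j$ with $a_i\in\mathfrak{I}(V)$, $d_j\in\mathfrak{I}(V)$ and $b_i,c_j\in T(V)$, so that $\Phi(x)=\sum_i\mathcal{N}(a_i)S(b_i)+\sum_j\mathcal{N}(c_j)S(d_j)$. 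In the first sum, homogeneity of $\mathfrak{I}(V)$ forces $\mathcal{N}(a_i)\in\mathfrak{I}(V)$ (on each graded component $\mathcal{N}$ acts by a scalar), and then $\mathcal{N}(a_i)S(b_i)\in\mathfrak{I}(V)$ since $\mathfrak{I}(V)$ is a two-sided ideal. In the second sum, $S(d_j)\in\mathfrak{I}(V)$ by antipode stability, and again $\mathcal{N}(c_j)S(d_j)\in\mathfrak{I}(V)$. Hence $\Phi(x)\in\mathfrak{I}(V)$; as $x$ was arbitrary and $\mathfrak{I}(V)$ is a linear subspace, $\Phi(\mathfrak{I}(V))\subseteq\mathfrak{I}(V)$, so $\Phi$ descends to a linear endomorphism of $\mathfrak{N}(V)=T(V)/\mathfrak{I}(V)$.

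The only point requiring care is the appeal to $\mathfrak{I}(V)$ being a genuine Hopf ideal — in particular $S(\mathfrak{I}(V))\subseteq\mathfrak{I}(V)$ together with the homogeneity of $\mathfrak{I}(V)$; both are standard consequences of $\mathfrak{N}(V)$ being a graded connected braided Hopf algebra, so that the antipode of $T(V)$ passes to the quotient and $\mathfrak{I}(V)$ is a graded subspace. Beyond that, the argument is purely formal manipulation of the convolution product, and I do not anticipate a genuine obstacle.
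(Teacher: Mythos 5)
Your argument is correct and is essentially the paper's own proof: both rest on Corollary \ref{NS} ($\Phi=\mathcal{N}\ast S$), the coideal decomposition $\Delta(\mathfrak{I}(V))\subset\mathfrak{I}(V)\ts T(V)+T(V)\ts\mathfrak{I}(V)$, the stability of $\mathfrak{I}(V)$ under $S$ and under $\mathcal{N}$ (via homogeneity), and the two-sided ideal property. You have merely written out explicitly the convolution expansion that the paper leaves implicit.
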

\begin{proof}
From the definition of $\mathfrak{I}(V)$, it is both a coideal and a two-sided ideal of $T(V)$. So the coproduct on it satisfies:
$$\Delta(\mathfrak{I}(V))\subset \mathfrak{I}(V)\ts T(V)+T(V)\ts\mathfrak{I}(V).$$
From Corollary \ref{NS}, $\Phi(\mathfrak{I}(V))=(\mathcal{N}\ast S)(\mathfrak{I}(V))\subset \mathfrak{I}(V)$ because $S(\mathfrak{I}(V))\subset \mathfrak{I}(V)$ and $\mathcal{N}$ respects $\mathfrak{I}(V)$ (note that $\mathfrak{I}(V)$ is a homogeneous ideal).
\end{proof}

\section{Decompositions in braid groups}
The objective of this section is to give a preparation for results that will be used in our investigations on the Dynkin operators and their relations with the structure of Nichols algebras. As such operators live in the group algebra $\mc[\fb_n]$, we would like to give first some decomposition results for some specific elements in $\mc[\fb_n]$.
\subsection{Central element}\label{4.1}
Let $n\geq 2$ be an integer and $Z(\fb_n)$ denote the center of $\fb_n$. 
\par
In the braid group $\mathfrak{B}_n$, there is a Garside element 
$$\Delta_n=(\sigma_1\cdots\sigma_{n-1})(\sigma_1\cdots\sigma_{n-2})\cdots(\sigma_1\sigma_2)\sigma_1.$$
The following characterization of $Z(\fb_n)$ is well known. 
\begin{proposition}[\cite{KT08}, Theorem 1.24]\label{cent}
For $n\geq 3$, let $\theta_n=\Delta_n^2$. Then $Z(\fb_n)$ is generated by $\theta_n$.
\end{proposition}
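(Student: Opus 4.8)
The plan is to combine three facts: that $\theta_n=\Delta_n^2$ is central in $\fb_n$ and has infinite order, that every central element of $\fb_n$ is a pure braid, and that the center of the pure braid group $P_n=\ker\pi$ is exactly $\langle\theta_n\rangle$.

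The first fact rests on the Garside identity $\Delta_n\s_i\Delta_n^{-1}=\s_{n-i}$ for $1\le i\le n-1$, which one proves by induction on $n$ from the factorization $\Delta_n=(\s_1\s_2\cdots\s_{n-1})\Delta_{n-1}$ together with the braid relations (equivalently, one checks $(\s_1\cdots\s_{n-1})\s_i(\s_1\cdots\s_{n-1})^{-1}=\s_{i+1}$ for $i\le n-2$ and uses $\theta_n=(\s_1\cdots\s_{n-1})^n$). In either formulation, conjugation by $\Delta_n$ is the involutive ``flip'' automorphism $\s_i\mapsto\s_{n-i}$, so conjugation by $\theta_n=\Delta_n^2$ is the identity automorphism and $\theta_n\in Z(\fb_n)$; moreover $\theta_n$ has infinite order because the abelianization $\fb_n\ra\mz$ sending each $\s_i$ to $1$ sends $\theta_n$ to $n(n-1)\neq0$.

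For the second fact, post-compose with the canonical surjection $\pi:\fb_n\ra\mathfrak{S}_n$: the image $\pi(Z(\fb_n))$ commutes with all of $\pi(\fb_n)=\mathfrak{S}_n$, hence lies in $Z(\mathfrak{S}_n)$, which is trivial since $n\ge3$; thus $Z(\fb_n)\subseteq\ker\pi=P_n$, and being central in $\fb_n$ it is a fortiori central in the subgroup $P_n$. This yields the chain $\langle\theta_n\rangle\subseteq Z(\fb_n)\subseteq Z(P_n)$, so it remains to prove $Z(P_n)=\langle\theta_n\rangle$, which is the step I expect to carry the real weight. Here I would use the split Birman exact sequence $1\ra F_{n-1}\ra P_n\ra P_{n-1}\ra1$, where $p:P_n\ra P_{n-1}$ forgets the last strand and $\ker p=F_{n-1}$ is free of rank $n-1$ on $A_{1,n},\dots,A_{n-1,n}$; note $p(\theta_n)=\theta_{n-1}$, since forgetting a strand turns the full twist on $n$ strands into the full twist on $n-1$ strands. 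One inducts on $n$ with base case $n=2$ (where $P_2\cong\mz=\langle\theta_2\rangle$). For the inductive step, take $z\in Z(P_n)$; then $p(z)\in Z(P_{n-1})=\langle\theta_{n-1}\rangle$, say $p(z)=\theta_{n-1}^k=p(\theta_n^k)$, so $z\theta_n^{-k}\in\ker p=F_{n-1}$ is still central in $P_n$, hence central in the subgroup $F_{n-1}$; since a free group of rank $\ge2$ has trivial center and $n-1\ge2$ for $n\ge3$, we get $z\theta_n^{-k}=1$, i.e. $z\in\langle\theta_n\rangle$. Combined with the chain above, $Z(\fb_n)=\langle\theta_n\rangle$.

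The delicate point in the last argument is the semidirect-product bookkeeping: an element that is central in the quotient $P_{n-1}$ need not have a central lift, so it is crucial that $\theta_n$ is itself central in $P_n$ and maps onto a generator of $Z(P_{n-1})$, which is exactly what makes the correction $z\mapsto z\theta_n^{-k}$ legitimate. An alternative route that avoids the pure braid group is Garside-theoretic: writing $x$ in left normal form $\Delta_n^p s_1\cdots s_\ell$, one shows centrality forces the canonical length $\ell$ to vanish, so $x=\Delta_n^p$, and then the flip automorphism forces $p$ to be even; this trades the structure theory of $P_n$ for the normal-form machinery of Garside groups.
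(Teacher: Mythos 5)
Your argument is correct. Note, however, that the paper does not prove this statement at all: it is imported verbatim from \cite{KT08}, Theorem 1.24, so there is no in-paper proof to compare against. Your route --- centrality of $\theta_n$ via the flip $\Delta_n\s_i\Delta_n^{-1}=\s_{n-i}$, reduction of $Z(\fb_n)$ into $Z(P_n)$ using the triviality of $Z(\mathfrak{S}_n)$ for $n\geq 3$, and then induction on $n$ via the strand-forgetting exact sequence $1\ra F_{n-1}\ra P_n\ra P_{n-1}\ra 1$ with the trivial-center property of free groups of rank at least $2$ --- is the standard proof and is essentially the one given in the cited reference, with all the delicate points (the legitimacy of the correction $z\mapsto z\theta_n^{-k}$, and why an element of $\ker p$ central in $P_n$ dies) handled correctly.
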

\indent
For the particular case $n=2$, we have $\theta_2=\Delta_2^2=\s_1^2$.
\par
Between lines of the proof of the proposition above in \cite{KT08}, the following lemma is obtained.
\begin{lemma}[\cite{KT08}] \label{changing}
For any $1\leq i\leq n-1$, $\sigma_i\Delta_n=\Delta_n\s_{n-i}$.
\end{lemma}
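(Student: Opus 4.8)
The statement is equivalent to the assertion that conjugation by $\Delta_n$ sends each generator $\sigma_i$ to $\sigma_{n-i}$. The plan is to prove the identity in the form stated, $\sigma_i\Delta_n=\Delta_n\sigma_{n-i}$ for all $1\le i\le n-1$, by induction on $n$. The base case $n=2$ is immediate: $\Delta_2=\sigma_1$, so $\sigma_1\Delta_2=\sigma_1^2=\Delta_2\sigma_1=\Delta_2\sigma_{2-1}$.

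Before the inductive step I would record two factorizations of the Garside element together with two commutation relations. Put $\delta_n=\sigma_1\sigma_2\cdots\sigma_{n-1}$ and $\rho_n=\sigma_{n-1}\sigma_{n-2}\cdots\sigma_1$, and let $\Delta_{n-1}$ denote the Garside element of the subgroup generated by $\sigma_1,\dots,\sigma_{n-2}$, a natural copy of $\fb_{n-1}$ inside $\fb_n$, so that the inductive hypothesis applies to it verbatim. Straight from the definition one has $\Delta_n=\delta_n\Delta_{n-1}$. A short auxiliary induction gives the mirrored factorization $\Delta_n=\Delta_{n-1}\rho_n$: assuming $\Delta_{n-1}=\Delta_{n-2}\rho_{n-1}$ and using that $\sigma_{n-1}$ commutes with $\Delta_{n-2}$, one gets $\delta_n\Delta_{n-2}=(\sigma_1\cdots\sigma_{n-2})\sigma_{n-1}\Delta_{n-2}=(\sigma_1\cdots\sigma_{n-2})\Delta_{n-2}\sigma_{n-1}=\Delta_{n-1}\sigma_{n-1}$, whence $\Delta_n=\delta_n\Delta_{n-1}=\delta_n\Delta_{n-2}\rho_{n-1}=\Delta_{n-1}\sigma_{n-1}\rho_{n-1}=\Delta_{n-1}\rho_n$. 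Finally, pushing a generator through the word $\delta_n$ (resp. $\rho_n$) using the far-commutation relations and a single braid relation yields $\delta_n\sigma_j=\sigma_{j+1}\delta_n$ and $\sigma_j\rho_n=\rho_n\sigma_{j+1}$ for all $1\le j\le n-2$.

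Now the inductive step. For $2\le i\le n-1$ I would use the first factorization: by $\delta_n\sigma_{i-1}=\sigma_i\delta_n$ and then the inductive hypothesis applied inside the copy of $\fb_{n-1}$,
$$\sigma_i\Delta_n=\sigma_i\delta_n\Delta_{n-1}=\delta_n\sigma_{i-1}\Delta_{n-1}=\delta_n\Delta_{n-1}\sigma_{(n-1)-(i-1)}=\Delta_n\sigma_{n-i}.$$
The relation $\delta_n\sigma_{i-1}=\sigma_i\delta_n$ is not available when $i=1$, and this is precisely where the mirrored factorization enters: using the inductive hypothesis inside $\fb_{n-1}$ and then $\sigma_{n-2}\rho_n=\rho_n\sigma_{n-1}$,
$$\sigma_1\Delta_n=\sigma_1\Delta_{n-1}\rho_n=\Delta_{n-1}\sigma_{n-2}\rho_n=\Delta_{n-1}\rho_n\sigma_{n-1}=\Delta_n\sigma_{n-1}.$$
This exhausts all values of $i$ and completes the induction.

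All the computations above are elementary shuffles with braid relations, so there is no deep obstacle here; the one genuine subtlety is the boundary case $i=1$, where the identity $\sigma_i\delta_n=\delta_n\sigma_{i-1}$ fails, which is the reason for carrying along the second factorization $\Delta_n=\Delta_{n-1}\rho_n$, and one must be careful that throughout, "$\Delta_{n-1}$" denotes the Garside element of the subgroup on strands $1,\dots,n-1$ so that the induction closes. A more structural alternative avoids the two-case analysis altogether: conjugation by $\Delta_n$ is an automorphism of the positive braid monoid $\fb_n^{+}$, hence permutes its atoms $\sigma_1,\dots,\sigma_{n-1}$; projecting along $\pi:\fb_n\ra\mathfrak{S}_n$ identifies this permutation with conjugation by the longest element of $\mathfrak{S}_n$, which sends $s_i$ to $s_{n-i}$, and this at once gives $\sigma_i\Delta_n=\Delta_n\sigma_{n-i}$.
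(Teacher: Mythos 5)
Your proof is correct, and it is worth noting that the paper itself offers no argument for this lemma at all: it is simply quoted from \cite{KT08}, being extracted from the proof of Theorem 1.24 there. Your induction therefore supplies a genuine, self-contained proof where the paper has none. The ingredients all check out: $\Delta_n=\delta_n\Delta_{n-1}$ is immediate from the definition; the mirrored factorization $\Delta_n=\Delta_{n-1}\rho_n$ follows from your auxiliary induction (and is consistent with part (2) of Lemma \ref{moving}); the commutation rules $\delta_n\sigma_j=\sigma_{j+1}\delta_n$ and $\sigma_j\rho_n=\rho_n\sigma_{j+1}$ for $1\leq j\leq n-2$ are each a single braid relation plus far commutations (the second is exactly part (1) of Lemma \ref{moving}); and the case split $i=1$ versus $2\leq i\leq n-1$ is exhaustive, with the indices $(n-1)-(i-1)=n-i$ and $n-2$ landing in the correct ranges for the copy of $\mathfrak{B}_{n-1}$ on the first $n-1$ strands. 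The one caveat concerns your ``more structural alternative'': the assertion that conjugation by $\Delta_n$ preserves the positive monoid $\mathfrak{B}_n^{+}$ and hence permutes its atoms is itself a nontrivial piece of Garside theory, of essentially the same depth as the lemma being proved, so it should not be presented as a shortcut without justification; since it is only an aside, this does not affect the validity of your main argument.
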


\begin{lemma}\label{moving}
The following identities hold:
\begin{enumerate}
\item
For any $1\leq s\leq n-2$, 
$$\s_s(\s_{n-1}\s_{n-2}\cdots\s_1)=(\s_{n-1}\s_{n-2}\cdots\s_1)\s_{s+1}.$$
\item
The element $\Delta_n$ has another presentation:
$$\Delta_n=\s_{n-1}(\s_{n-2}\s_{n-1})\cdots(\s_1\cdots\s_{n-2}\s_{n-1}).$$
\item
The element $\theta_n$ has another presentation:
$$\theta_n=\Delta_n^2=(\s_{n-1}\s_{n-2}\cdots\s_1)^n.$$
\end{enumerate}
\end{lemma}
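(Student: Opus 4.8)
The plan is to handle the three parts in order: (1) is a direct braid-relation computation, (2) an induction on $n$ resting on (1), and (3) the substantial part.

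\emph{Part (1).} I would compute directly in $\fb_n$. Writing $\s_{n-1}\s_{n-2}\cdots\s_1=(\s_{n-1}\cdots\s_{s+2})(\s_{s+1}\s_s)(\s_{s-1}\cdots\s_1)$ and using that $\s_s$ commutes with $\s_i$ for every $i\ge s+2$, I multiply on the left by $\s_s$ and slide it past the first block to obtain $(\s_{n-1}\cdots\s_{s+2})(\s_s\s_{s+1}\s_s)(\s_{s-1}\cdots\s_1)$; I then apply the braid relation $\s_s\s_{s+1}\s_s=\s_{s+1}\s_s\s_{s+1}$ and slide the new leftmost $\s_{s+1}$ to the far right through $\s_{s-1}\cdots\s_1$ (which it commutes with, those indices being $\le s-1$), landing on $(\s_{n-1}\cdots\s_1)\s_{s+1}$. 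The extreme cases $s=1$ and $s=n-2$ simply make one outer block empty.

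\emph{Part (2).} I would induct on $n$, the case $n=2$ being $\Delta_2=\s_1$. By definition $\Delta_n=(\s_1\cdots\s_{n-1})\Delta_{n-1}$ with $\Delta_{n-1}$ living in the subgroup $\langle\s_1,\dots,\s_{n-2}\rangle$. Applying the word-reversal anti-automorphism of $\fb_n$ to the identity in (1) (or redoing the same manipulation) gives the mirror identity $(\s_1\cdots\s_{n-1})\s_s=\s_{s+1}(\s_1\cdots\s_{n-1})$ for $1\le s\le n-2$; iterating it, with $\delta:=\s_1\cdots\s_{n-1}$, yields $\delta\,(\s_j\s_{j+1}\cdots\s_{n-2})=(\s_{j+1}\s_{j+2}\cdots\s_{n-1})\,\delta$ for each $j$. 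Substituting the inductive presentation $\Delta_{n-1}=\s_{n-2}(\s_{n-3}\s_{n-2})\cdots(\s_1\cdots\s_{n-2})$ into $\Delta_n=\delta\Delta_{n-1}$ and pushing $\delta$ to the right through the blocks one at a time, the block $\s_{n-1-k}\cdots\s_{n-2}$ is turned into $\s_{n-k}\cdots\s_{n-1}$ and emerges to the left of $\delta$, and after the last block a trailing factor $\delta=\s_1\cdots\s_{n-1}$ is left over; collecting the accumulated factors gives exactly $\s_{n-1}(\s_{n-2}\s_{n-1})\cdots(\s_1\cdots\s_{n-1})$.

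\emph{Part (3).} Since $\theta_n=\Delta_n^2$ by definition (Proposition \ref{cent}), the content is the identity $\Delta_n^2=(\s_{n-1}\cdots\s_1)^n$. I would first establish the ascending version $\Delta_n^2=(\s_1\cdots\s_{n-1})^n$ and then apply the word-reversal anti-automorphism $\rho$: because $\Delta_n$ is the Matsumoto lift $T_{w_0}$ of the longest element $w_0\in\mathfrak{S}_n$ and $w_0=w_0^{-1}$, the reverse of any reduced word for $w_0$ is again reduced, so $\rho(\Delta_n)=\Delta_n$ and hence $\rho(\Delta_n^2)=\Delta_n^2$; on the other hand $\rho\big((\s_1\cdots\s_{n-1})^n\big)=(\s_{n-1}\cdots\s_1)^n$, which is the claim. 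The ascending version I would prove by induction on $n$, using $\Delta_n=(\s_1\cdots\s_{n-1})\Delta_{n-1}$ at two levels, the commutation of $\Delta_{n-2}$ with $\s_{n-1}$, Lemma \ref{changing} inside $\fb_{n-1}$ (which gives $\Delta_{n-1}(\s_1\cdots\s_{n-2})=(\s_{n-2}\cdots\s_1)\Delta_{n-1}$), and the inductive hypothesis; these reduce the step to an identity of the shape $\Delta_{n-1}(\s_1\cdots\s_{n-1})\Delta_{n-1}=(\s_1\cdots\s_{n-1})^{n-1}$, whose verification — again via Lemma \ref{changing} and a little more bookkeeping — is the crux. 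A self-contained alternative: show directly that $(\s_{n-1}\cdots\s_1)^n$ is central, noting by (1) that every $\s_j$ is a conjugate of $\s_1$ by a power of $\s_{n-1}\cdots\s_1$, so that it is enough for $(\s_{n-1}\cdots\s_1)^n$ to commute with $\s_1$, which comes down to one explicit computation of $(\s_{n-1}\cdots\s_1)^{-1}\s_{n-1}(\s_{n-1}\cdots\s_1)$; then, $(\s_{n-1}\cdots\s_1)^n$ being central, Proposition \ref{cent} gives $(\s_{n-1}\cdots\s_1)^n=\theta_n^k$ for some $k$, and the abelianization $\fb_n\to\mz$ (under which both $(\s_{n-1}\cdots\s_1)^n$ and $\theta_n$ have image $n(n-1)$) forces $k=1$. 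The case $n=2$ is immediate. Either way, the computational bookkeeping in part (3) is the main obstacle; parts (1) and (2) are routine once the braid relations are applied with care.
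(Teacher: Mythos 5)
Your proposal is correct in outline but takes a genuinely different route from the paper in parts (2) and (3), and in part (3) it defers the hardest computation. Part (1) matches the paper (the paper just says ``direct verification''; your block-splitting argument is exactly that verification). For part (2) the paper does not induct: it notes that $\Delta_n=T_{w_0}$ for the longest element $w_0\in\mathfrak{S}_n$, that the right-hand side also projects onto $w_0$, and that both words have the same length $\binom{n}{2}$, hence are reduced; the Matsumoto section then identifies them in $\fb_n$. Your induction via the mirror identity $(\s_1\cdots\s_{n-1})\s_s=\s_{s+1}(\s_1\cdots\s_{n-1})$ is valid but longer than this one-line length argument. For part (3) the paper's proof is a short direct manipulation: write $\Delta_n^2=\Delta_n\cdot\Delta_n$, expand the right-hand copy by its defining word, push the left-hand copy through it using Lemma \ref{changing} in the form $\Delta_n\s_i=\s_{n-i}\Delta_n$ (which reverses every index and turns the ascending blocks into the descending blocks $(\s_{n-1}\cdots\s_1)(\s_{n-1}\cdots\s_2)\cdots\s_{n-1}$), then expand the trailing $\Delta_n$ and regroup. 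Your Route A reduces the induction step to $\Delta_{n-1}(\s_1\cdots\s_{n-1})\Delta_{n-1}=(\s_1\cdots\s_{n-1})^{n-1}$, which you correctly flag as the crux but do not verify --- as written, all of the content of part (3) still sits in that unproved identity. Your Route B (centrality of $\delta'^{\,n}$ for $\delta'=\s_{n-1}\cdots\s_1$, then Proposition \ref{cent} plus the abelianization $\fb_n\ra\mz$ to force the exponent $k=1$ from $n(n-1)=k\,n(n-1)$) is the more attractive alternative and is logically complete except for one braid computation: since $\delta'^{-(n-2)}\s_1\delta'^{\,n-2}=\s_{n-1}$ by (1), commutation of $\delta'^{\,n}$ with $\s_1$ amounts to $\delta'^{-2}\s_{n-1}\delta'^{\,2}=\s_1$, so you must conjugate $\s_{n-1}$ by $\delta'$ \emph{twice} (a single conjugation $\delta'^{-1}\s_{n-1}\delta'$ is not a generator), and that two-step computation is not carried out. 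Route B also uses the full strength of Proposition \ref{cent} (the center is exactly $\langle\theta_n\rangle$, available only for $n\geq 3$, so $n=2$ must be checked by hand as you note), whereas the paper's argument needs only Lemma \ref{changing}. If you perform that one remaining conjugation computation, Route B is a complete, self-contained, and arguably cleaner proof of (3).
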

\begin{proof}
\begin{enumerate}
\item This can be proved by a direct verification.
\item
$\Delta_n$ is the image under the Matsumoto section of the element $\sigma\in\mathfrak{S}_n$ such that for any $1\leq i\leq n$, $\sigma(i)=n-i+1$. It is easy to check that once projected to $\mathfrak{S}_n$, the element on the right hand side is exactly $\sigma$. Moreover, this decomposition is reduced because both sides have the same length, which finishes the proof.
\item
This identity comes from a direct computation using Lemma \ref{changing}:
\begin{eqnarray*}
\Delta_n^2 &=& \Delta_n\Delta_n\\
&=& \Delta_n (\s_1\s_2\cdots\s_{n-1})(\s_1\s_2\cdots\s_{n-2})\cdots(\s_1\s_2)\s_1\\
&=& (\s_{n-1}\s_{n-2}\cdots\s_1)(\s_{n-1}\s_{n-2}\cdots\s_2)\cdots(\s_{n-1}\s_{n-2})\s_{n-1}\Delta_n\\
&=& (\s_{n-1}\cdots\s_1)\cdots(\s_{n-1}\s_{n-2})\s_{n-1}(\s_1\cdots\s_{n-1})\cdots(\s_1\s_2)\s_1\\
&=& (\s_{n-1}\s_{n-2}\cdots\s_1)^n.
\end{eqnarray*}
\end{enumerate}
\end{proof}

The following proposition is the main result of this subsection.
\begin{proposition}
The element $\theta_n$ has another presentation:
$$\theta_n=\Delta_n^2=(\s_{n-1}^2\s_{n-2}\cdots\s_1)^{n-1}.$$
\end{proposition}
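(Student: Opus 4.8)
The plan is to reduce the identity, via Lemma \ref{moving}(3), to a purely combinatorial statement about powers of $c:=\sigma_{n-1}\sigma_{n-2}\cdots\sigma_1$, and then to dispatch that statement by a short telescoping induction fed by the ``index shift'' relation of Lemma \ref{moving}(1). For $n=2$ the claim is trivial, since $(\sigma_1^2)^1=\sigma_1^2=\Delta_2^2=\theta_2$; so I assume $n\geq 3$ below.

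First I would recall from Lemma \ref{moving}(3) that $\theta_n=\Delta_n^2=c^n$, and observe that $\sigma_{n-1}^2\sigma_{n-2}\cdots\sigma_1=\sigma_{n-1}c$; thus it suffices to prove $(\sigma_{n-1}c)^{n-1}=c^n$. The only extra ingredient needed is Lemma \ref{moving}(1), which, written as $\sigma_s c=c\sigma_{s+1}$ for $1\leq s\leq n-2$, is equivalent to $c\sigma_j=\sigma_{j-1}c$ for $2\leq j\leq n-1$.

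The heart of the argument is the auxiliary identity
$$(\sigma_k c)^k=(\sigma_k\sigma_{k-1}\cdots\sigma_1)\,c^k,\qquad 1\leq k\leq n-1,$$
which I would establish by induction on $k$. The case $k=1$ is immediate. For $2\leq k\leq n-1$ one regroups $(\sigma_k c)^k=\sigma_k\,(c\sigma_k)^{k-1}\,c$; since $k$ lies in the admissible range, $c\sigma_k=\sigma_{k-1}c$, so $(c\sigma_k)^{k-1}=(\sigma_{k-1}c)^{k-1}$, and the induction hypothesis turns this into $(\sigma_{k-1}\cdots\sigma_1)c^{k-1}$. Hence $(\sigma_k c)^k=\sigma_k(\sigma_{k-1}\cdots\sigma_1)c^{k-1}c=(\sigma_k\sigma_{k-1}\cdots\sigma_1)c^k$, as claimed.

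Specializing this auxiliary identity to $k=n-1$ then yields $(\sigma_{n-1}c)^{n-1}=(\sigma_{n-1}\sigma_{n-2}\cdots\sigma_1)\,c^{n-1}=c\cdot c^{n-1}=c^n=\theta_n$, which is the assertion. The one thing to watch is that the shift relation $c\sigma_k=\sigma_{k-1}c$ is only available for $k\in\{2,\dots,n-1\}$, and indeed the induction never leaves that range; beyond spotting the regrouping $(\sigma_k c)^k=\sigma_k(c\sigma_k)^{k-1}c$, which is exactly what lets Lemma \ref{moving}(1) act, the computation is routine and I do not anticipate a genuine obstacle.
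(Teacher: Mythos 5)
Your proof is correct and follows essentially the same route as the paper: both reduce the claim to $\theta_n=c^n$ with $c=\sigma_{n-1}\cdots\sigma_1$ via Lemma \ref{moving}(3) and then shuttle generators through powers of $c$ using the shift relation of Lemma \ref{moving}(1). Your auxiliary identity $(\sigma_k c)^k=(\sigma_k\cdots\sigma_1)c^k$ is just a cleaner, fully inductive packaging of the paper's informal ``move each $\sigma_i$ to the right and repeat'' manipulation.
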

\begin{proof}
From Lemma \ref{moving},
$$\Delta_n^2=(\s_{n-1}\s_{n-2}\cdots\s_1)(\s_{n-1}\s_{n-2}\cdots\s_1)\cdots(\s_{n-1}\s_{n-2}\cdots\s_1).$$
At first, using Lemma \ref{moving}, it is possible to move the first $\s_1$ towards right until it can not move anymore. We exchange it with $(\s_{n-1}\s_{n-2}\cdots\s_1)$ for $n-2$ times, which gives $\s_{n-1}$ finally and so:
$$\Delta_n^2=(\s_{n-1}\s_{n-2}\cdots\s_2)(\s_{n-1}\s_{n-2}\cdots\s_1)\cdots(\s_{n-1}^2\s_{n-2}\cdots\s_1).$$
Repeating this procedure with the help of Lemma \ref{moving} for the first $\s_2,\cdots,\s_{n-2}$, we will obtain the presentation as announced in the proposition.
\end{proof}

\subsection{Decompositions in the group algebra}\label{decomposition}
In this subsection, we will work in the group algebra $\mc[\fb_n]$ for some $n\geq 2$.
\par
The symmetrization operator in $\mathbb{C}[\mathfrak{B}_n]$ is defined by:
$$S_n=\sum_{\sigma\in\mathfrak{S}_n}T_\sigma\in\mc[\fb_n].$$
Because $V$ is a braided vector space and $\fb_n$ acts naturally on $V^{\ts n}$, we may treat $S_n$ as a linear operator in $End(V^{\ts n})$.
\par
For $1\leq i\leq n-1$, let $(i,i+1)\in\mathfrak{S}_n$ be a transposition. We have seen that $T_{(i,i+1)}=\sigma_i$.
\par
The element $S_n\in\mc[\fb_n]$ has a remarkable decomposition as shown in \cite{DKKT}. For any $2\leq m\leq n$, we define 
$$T_m=1+\s_{m-1}+\s_{m-1}\s_{m-2}+\cdots+\s_{m-1}\s_{m-2}\cdots\s_1\in\mc[\fb_n].$$
\begin{proposition}[\cite{DKKT}]\label{decomposition:sn}
For any $n\geq 2$, 
$$S_n=T_2T_3\cdots T_n\in\mc[\fb_n].$$
\end{proposition}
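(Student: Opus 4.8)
The plan is to establish, for every $n\geq 2$, the recursion
\[
S_n = S_{n-1}\,T_n \qquad\text{in } \mc[\fb_n],
\]
where $S_{n-1}=\sum_{w\in\mathfrak{S}_{n-1}}T_w$ is built from the parabolic subgroup $\mathfrak{S}_{n-1}=\langle s_1,\dots,s_{n-2}\rangle\subset\mathfrak{S}_n$; since reduced expressions of elements of $\mathfrak{S}_{n-1}$ involve only $s_1,\dots,s_{n-2}$, this $S_{n-1}$ is literally the symmetrization element on $n-1$ strands viewed in $\mc[\fb_n]$, and by induction it equals $T_2T_3\cdots T_{n-1}$. Granting the recursion, the base case $n=2$ reads $S_2=1+\s_1=T_2$, and the inductive step gives $S_n=(T_2T_3\cdots T_{n-1})T_n$, which is the claim.

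To prove the recursion I would use the coset decomposition of $\mathfrak{S}_n$ along $\mathfrak{S}_{n-1}$. Put $c_j=s_{n-1}s_{n-2}\cdots s_j$ for $1\leq j\leq n-1$ and $c_n=\mathrm{id}$; each $c_j$ is written here in a reduced form, so $l(c_j)=n-j$ and $T_{c_j}=\s_{n-1}\s_{n-2}\cdots\s_j$ (resp. $T_{c_n}=1$), whence $\sum_{j=1}^{n}T_{c_j}=T_n$. Because $\mathfrak{S}_{n-1}$ is the stabiliser of the letter $n$, the right coset $\mathfrak{S}_{n-1}\sigma$ depends only on $\sigma^{-1}(n)$, and $c_j$ is the unique minimal-length representative of the coset $\{\tau:\tau^{-1}(n)=j\}$ (any such $\tau$ has at least the $n-j$ inversions $(j,j+1),\dots,(j,n)$). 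Hence every $\sigma\in\mathfrak{S}_n$ factors uniquely as $\sigma=w\,c_j$ with $w\in\mathfrak{S}_{n-1}$. The crucial point is that this factorisation is length-additive, $l(\sigma)=l(w)+l(c_j)$; this can be verified exactly as in Lemma \ref{perm} by counting inversions — $c_j$ is increasing on $\{1,\dots,n\}\setminus\{j\}$ and carries $n$ into position $j$, so the inversions of $wc_j$ are precisely the $n-j$ pairs $(j,j+1),\dots,(j,n)$ together with a relabelled copy of the inversions of $w$ — or it may simply be quoted from the theory of Coxeter groups.

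Finally, the length-additivity property of the Matsumoto section recalled in Section \ref{Dynkin} yields $T_\sigma=T_wT_{c_j}$ for each $\sigma=wc_j$ as above, so
\[
S_n=\sum_{\sigma\in\mathfrak{S}_n}T_\sigma
=\sum_{j=1}^{n}\sum_{w\in\mathfrak{S}_{n-1}}T_wT_{c_j}
=\Bigl(\sum_{w\in\mathfrak{S}_{n-1}}T_w\Bigr)\Bigl(\sum_{j=1}^{n}T_{c_j}\Bigr)
=S_{n-1}\,T_n ,
\]
and the induction is complete. I expect the only genuine obstacle to be the length-additive parabolic factorisation $l(wc_j)=l(w)+l(c_j)$; everything else is bookkeeping with reduced words. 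One should also be careful about the order of the product: it is essential that the factorisation is $\sigma=w\,c_j$ with the parabolic part on the left, so that $S_{n-1}$ emerges on the left and $T_n$ on the right, matching $S_n=T_2\cdots T_n$.
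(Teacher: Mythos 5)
Your proof is correct and is essentially the approach the paper takes, which simply remarks that the identity holds after projection to $\mc[\mathfrak{S}_n]$ (the parabolic coset decomposition you spell out) and that the expansion of $T_2\cdots T_n$ contains only reduced terms (your length-additivity $l(wc_j)=l(w)+l(c_j)$, which makes the Matsumoto section multiplicative). You have merely written out in full, via the recursion $S_n=S_{n-1}T_n$, what the paper leaves as a two-line sketch with a reference to \cite{DKKT}.
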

In fact, this proposition is true when being projected to $\mc[\mathfrak{S}_n]$; then notice that the expansion of the product on the right hand side contains only reduced terms.
\par
Recall the definition of $P_{1,n}$ in Section \ref{Dynkin}. To simplify the notation, we denote 
$$P_n=T_{P_{1,n}}\in\mc[\fb_n].$$
This element will be an important ingredient in our further discussion.
\par
For $n\geq 2$, recall the decomposition of $P_n$ given in Lemma \ref{decomposition:pn}:
$$P_{n}=(1-\s_{n-1}\s_{n-2}\cdots\s_1)(1-\s_{n-1}\s_{n-2}\cdots\s_{2})\cdots(1-\s_{n-1}).$$

This element $P_n$ permits us to give a much more refined structure of $T_n$. We introduce another member
$$T_n'=(1-\s_{n-1}^2\s_{n-2}\cdots\s_1)(1-\s_{n-1}^2\s_{n-2}\cdots\s_2)\cdots(1-\s_{n-1}^2)\in\mathbb{C}[\mathfrak{B}_n].$$
\begin{proposition}\label{decomposition:tn}
For $n\geq 2$, the decomposition $T_nP_n=T_n'$ holds in $\mc[\fb_n]$.
\end{proposition}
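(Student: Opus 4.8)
The plan is to prove the identity by induction on $n$, after rewriting all three elements in a uniform notation and isolating the single genuinely computational ingredient, which is a commutation relation obtained from Lemma~\ref{moving}(1).

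First I would introduce the shorthand $a_k=\s_{n-1}\s_{n-2}\cdots\s_k$ for $1\leq k\leq n-1$, with $a_n=1$, and $b_k=\s_{n-1}^2\s_{n-2}\cdots\s_k=\s_{n-1}a_k$. With this notation $T_n=\sum_{k=1}^n a_k$, $P_n=(1-a_1)(1-a_2)\cdots(1-a_{n-1})$ and $T_n'=(1-b_1)(1-b_2)\cdots(1-b_{n-1})$. The base case $n=2$ is immediate: $T_2P_2=(1+\s_1)(1-\s_1)=1-\s_1^2=T_2'$.

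For the inductive step ($n\geq 3$) I would first establish, purely inside $\mc[\fb_n]$, the identity
$$(1-b_1)(T_n-a_1)=T_n(1-a_1).$$
Expanding both sides, this reduces to $T_na_1-b_1T_n=(1-b_1)a_1$; writing $T_na_1=\sum_{k=1}^n a_ka_1$ and $b_1T_n=\sum_{k=1}^n b_1a_k$, it follows once we know the term-by-term relation $a_ka_1=b_1a_{k+1}$ for $1\leq k\leq n-1$, since the only uncancelled boundary contributions $a_na_1=a_1$ and $b_1a_1$ then combine to exactly $(1-b_1)a_1$. The relation $a_ka_1=b_1a_{k+1}$ is where Lemma~\ref{moving}(1) enters: that lemma is equivalent to $a_1\s_t=\s_{t-1}a_1$ for $2\leq t\leq n-1$, so pushing $a_1$ rightwards through the factors $\s_{n-1},\s_{n-2},\cdots,\s_{k+1}$ of $a_{k+1}$ yields $a_1a_{k+1}=(\s_{n-2}\s_{n-3}\cdots\s_k)a_1$, and multiplying on the left by $\s_{n-1}$ gives $b_1a_{k+1}=(\s_{n-1}\s_{n-2}\cdots\s_k)a_1=a_ka_1$.

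To close the induction I would use the algebra homomorphism $\phi:\mc[\fb_{n-1}]\to\mc[\fb_n]$ determined by $\s_i\mapsto\s_{i+1}$. A direct inspection of the defining expressions shows $\phi(T_{n-1})=T_n-a_1$, $\phi(P_{n-1})=(1-a_2)\cdots(1-a_{n-1})$ and $\phi(T_{n-1}')=(1-b_2)\cdots(1-b_{n-1})$, hence $P_n=(1-a_1)\phi(P_{n-1})$ and $T_n'=(1-b_1)\phi(T_{n-1}')$. Applying $\phi$ to the induction hypothesis $T_{n-1}P_{n-1}=T_{n-1}'$ gives $(T_n-a_1)\phi(P_{n-1})=\phi(T_{n-1}')$, so
$$T_n'=(1-b_1)\phi(T_{n-1}')=(1-b_1)(T_n-a_1)\phi(P_{n-1})=T_n(1-a_1)\phi(P_{n-1})=T_nP_n,$$
the third equality being the commutation identity above. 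I expect the only real obstacle to be the bookkeeping in that commutation identity: keeping the index ranges correct while iterating Lemma~\ref{moving}(1), and verifying that exactly the two boundary terms $a_na_1$ and $b_1a_1$ remain uncancelled. Everything else — the shift homomorphism $\phi$ and the induction itself — is routine.
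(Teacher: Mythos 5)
Your proof is correct, but it takes a genuinely different route from the paper. The paper's proof is a two-line citation: it invokes Proposition 6.11 of \cite{DKKT}, which factors $T_n$ as
$(1-\s_{n-1}^2\cdots\s_1)\cdots(1-\s_{n-1}^2)(1-\s_{n-1})^{-1}\cdots(1-\s_{n-1}\cdots\s_1)^{-1}$
``if all inverses appearing are well defined,'' and then cancels the inverse factors against the factorization of $P_n$ from Lemma \ref{decomposition:pn}. You instead give a self-contained induction whose computational core is the identity $(1-b_1)(T_n-a_1)=T_n(1-a_1)$, proved term by term from the commutation relation of Lemma \ref{moving}(1), combined with the shift homomorphism $\s_i\mapsto\s_{i+1}$ to import the inductive hypothesis; I checked the key relation $a_ka_1=b_1a_{k+1}$ and the bookkeeping of the two boundary terms, and they are right. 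What your approach buys is twofold: it avoids reliance on the external result of \cite{DKKT}, and it never leaves $\mc[\fb_n]$, so you do not need to worry about the caveat on invertibility of the factors $(1-\s_{n-1}\cdots\s_k)$ (which in the group algebra itself, or acting on a given module, need not be invertible; the paper's cancellation argument implicitly requires a genericity or polynomial-identity justification that your argument renders unnecessary). What the paper's approach buys is brevity and a conceptual link to the quasi-determinant factorizations of \cite{DKKT}, which reappear in Corollary \ref{1-thetan}. Either proof is acceptable; yours is arguably the more rigorous as written.
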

\begin{proof}
The Proposition 6.11 in \cite{DKKT} affirms that if all inverses appearing are well defined, then
$$T_n=(1-\s_{n-1}^2\s_{n-2}\cdots\s_1)\cdots(1-\s_{n-1}^2)(1-\s_{n-1})^{-1}\cdots(1-\s_{n-1}\cdots\s_1)^{-1}.$$
So the proposition follows from Lemma \ref{decomposition:pn}.
\end{proof}
\begin{corollary}\label{1-thetan}
The following identity holds in $\mc[\fb_n]$:
$$\left(\sum_{k=0}^{n-2}(\s_{n-1}^2\s_{n-2}\cdots\s_1)^k\right)(1-\s_{n-1}^2\s_{n-2}\cdots\s_1)=1-\Delta_n^2=1-\theta_n.$$
Moreover, for $3\leq s\leq n-1$, let $\iota_s:\fb_s\hookrightarrow\fb_n$ be the canonical embedding of braid groups on the last $s$ strands. If $\theta_s$ is the central element in $\fb_s$, we denote $\theta_s^{\iota_s}=\iota_s(\theta_s)$ and $\theta_2^{\iota_2}=\s_{n-1}^2$, then there exists an element
$$L_n=\left(\sum_{k=0}^1(\s_{n-1}^2\s_{n-2})^k\right)\cdots\left(\sum_{k=0}^{n-2}(\s_{n-1}^2\s_{n-2}\cdots\s_1)^k\right)\in\mc[\fb_n],$$
such that in $\mc[\fb_n]$,
$$L_nT_n'=(1-\theta_n)(1-\theta_{n-1}^{\iota_{n-1}})\cdots (1-\theta_2^{\iota_2}).$$
\end{corollary}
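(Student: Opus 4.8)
The plan is to handle the two assertions in turn: the first is a telescoping identity, and the second follows from it by induction on $n$, the key point being that $1-\theta_n$ is central in $\mc[\fb_n]$.

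For the first identity, set $x=\s_{n-1}^2\s_{n-2}\cdots\s_1$. Then the left-hand side equals $\bigl(\sum_{k=0}^{n-2}x^k\bigr)(1-x)=1-x^{n-1}$, and since $x^{n-1}=\theta_n$ by the presentation $\theta_n=\Delta_n^2=(\s_{n-1}^2\s_{n-2}\cdots\s_1)^{n-1}$ proved earlier in Subsection~\ref{4.1}, we are done.

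For the second identity I would induct on $n\geq 3$. The base case $n=3$ is immediate: $L_3=1+\s_2^2\s_1$, $T_3'=(1-\s_2^2\s_1)(1-\s_2^2)$, so by the first identity $L_3T_3'=(1-\theta_3)(1-\s_2^2)=(1-\theta_3)(1-\theta_2^{\iota_2})$. For the inductive step, write $L_n$ (read left to right) as the product of the factors $A_j=\sum_{k=0}^{j-1}(\s_{n-1}^2\s_{n-2}\cdots\s_{n-j})^k$ for $j=2,\cdots,n-1$, and $T_n'$ as the product of $B_i=1-\s_{n-1}^2\s_{n-2}\cdots\s_i$ for $i=1,\cdots,n-1$. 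The outermost adjacent pair is $A_{n-1}B_1$, which by the first identity equals $1-\theta_n$; since $\theta_n$ generates $Z(\fb_n)$ by Proposition~\ref{cent}, the factor $1-\theta_n$ is central and can be moved to the front, giving
$$L_nT_n'=(1-\theta_n)\,(A_2\cdots A_{n-2})(B_2\cdots B_{n-1}).$$
Letting $\iota_{n-1}:\fb_{n-1}\hookrightarrow\fb_n$, $\s_i\mapsto\s_{i+1}$, be the embedding on the last $n-1$ strands, a shift of subscripts identifies $A_2\cdots A_{n-2}$ with $\iota_{n-1}(L_{n-1})$ and $B_2\cdots B_{n-1}$ with $\iota_{n-1}(T_{n-1}')$, so $L_nT_n'=(1-\theta_n)\,\iota_{n-1}(L_{n-1}T_{n-1}')$. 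Applying the induction hypothesis in $\mc[\fb_{n-1}]$, then $\iota_{n-1}$, and noting that $\iota_{n-1}$ composed with the ``last $s$ strands'' embedding into $\fb_{n-1}$ is the ``last $s$ strands'' embedding into $\fb_n$ (so that $\iota_{n-1}$ sends each $\theta_s^{\iota_s}$ of $\fb_{n-1}$ to the corresponding element of $\fb_n$ and $\theta_{n-1}$ to $\theta_{n-1}^{\iota_{n-1}}$), we obtain $\iota_{n-1}(L_{n-1}T_{n-1}')=(1-\theta_{n-1}^{\iota_{n-1}})\cdots(1-\theta_2^{\iota_2})$, which completes the induction.

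The only care needed is bookkeeping: aligning the index ranges in the definitions of $L_n$ and $T_n'$ so that precisely the outermost adjacent pair is the one the first identity handles, and verifying the compatibility of the embeddings $\iota_s$ under composition. I expect no genuine obstacle here; the real content is the telescoping of the first identity together with the centrality of $1-\theta_n$.
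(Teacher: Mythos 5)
Your proposal is correct and is essentially the argument the paper intends: the paper states this as a corollary without a written proof, relying on the telescoping consequence of the presentation $\theta_n=(\s_{n-1}^2\s_{n-2}\cdots\s_1)^{n-1}$ from the preceding proposition, which is exactly your first step. Your inductive bookkeeping for the second identity (pulling out the central factor $1-\theta_n$ and identifying the remaining factors with $\iota_{n-1}(L_{n-1})$ and $\iota_{n-1}(T_{n-1}')$) checks out and fills in the details the paper leaves implicit.
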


\section{The study of the ideal $\mathfrak{I}(V)$}
We keep assumptions and notations in previous sections.
\subsection{A result on the quotient ideal}
As we have seen, the Nichols algebra associated to an $H$-Yetter-Drinfel'd module $V$ is a quotient of the braided tensor Hopf algebra $T(V)$ by a maximal coideal $\mathfrak{I}(V)$ contained in $T^{\geq 2}(V)$. This definition tells us almost nothing about the concrete structure of $\mathfrak{I}(V)$: as $\mathfrak{I}(V)$ is an ideal, the Nichols algebra $\mathfrak{N}(V)$ can be viewed as imposing some relations in $T(V)$, but such relations can never be read directly from the definition.
\par
As we know, the best result for the structure of $\mathfrak{I}(V)$ is obtained by M. Rosso in \cite{Ros98} in a dual point of view and by P. Schaurenburg in \cite{Sbg96}. We recall this result in this subsection.
\par
Let $S_n:V^{\ts n}\ra V^{\ts n}$ be the element in $\mc[\fb_n]$ defined in Section \ref{decomposition}.
\begin{proposition}[\cite{Ros98}, \cite{Sbg96}]\label{schauenburg}
Let $V$ be an $H$-Yetter-Drinfel'd module. Then
$$\mathfrak{N}(V)=\bigoplus_{n\geq 0} \left(V^{\ts n}/\ker(S_n)\right).$$
\end{proposition}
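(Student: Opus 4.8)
The plan is to show that for each $n$, the degree-$n$ component of the defining ideal $\mathfrak{I}(V)$ coincides with $\ker(S_n)\subset V^{\ts n}$; the statement then follows since $\mathfrak{N}(V)=T(V)/\mathfrak{I}(V)$ and $T(V)=\bigoplus_n V^{\ts n}$. One inclusion is the ``easy'' direction and rests on the duality between $T(V)$ and the quantum shuffle algebra: the symmetrization map $S=\bigoplus_n S_n$ is precisely the braided-Hopf-algebra morphism from $T(V)$ to the cofree side (the quantum shuffle algebra $S_\sigma(V)$ mentioned in Section~\ref{section2}), so $\mathfrak{N}(V)$ embeds as a subobject there and $\ker S$ is a Hopf ideal contained in $T^{\geq 2}(V)$; by maximality $\mathfrak{I}(V)\subseteq\ker S$, i.e.\ $\mathfrak{I}(V)\cap V^{\ts n}\subseteq\ker(S_n)$.

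For the reverse inclusion I would argue that $\ker S=\bigoplus_n\ker(S_n)$ is itself a coideal of $T(V)$ contained in $T^{\geq 2}(V)$, whence it is contained in the maximal such coideal $\mathfrak{I}(V)$. Being a (two-sided) ideal is automatic once it is a coideal contained in $T^{\geq 2}(V)$ by the same argument already used in the excerpt for $\mathfrak{I}(V)$ itself (a coideal inside $\ker\ve\cap\ker(\text{augmentation to }V)$ that is stable under the relevant structure is two-sided), so the real content is: $\Delta(\ker S)\subseteq \ker S\ts T(V)+T(V)\ts\ker S$. This I would deduce from the fact that $S:T(V)\to S_\sigma(V)$ is a morphism of coalgebras together with the explicit factorization $S_n=T_2T_3\cdots T_n$ of Proposition~\ref{decomposition:sn}, which makes $S_n$ compatible in each degree with the deconcatenation coproduct of $T(V)$; concretely, for $x\in\ker(S_n)$ one writes $\Delta(x)=\sum x_{(1)}\ts x_{(2)}$ with homogeneous components of bidegree $(k,n-k)$ and checks, using that $S$ is a coalgebra map and $S_k$ is injective on the image, that each component lies in $\ker(S_k)\ts V^{\ts(n-k)}+V^{\ts k}\ts\ker(S_{n-k})$.

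The main obstacle is precisely this last verification that $\ker S$ is a coideal, i.e.\ tracking the interaction of the symmetrizer $S_n$ with deconcatenation. The clean way is to invoke the established fact (Rosso, Schauenburg) that $S$ is a morphism of braided bialgebras $T(V)\to S_\sigma(V)$ onto its image, so that $\ker S$ is a bi-ideal by general nonsense; then the only thing left is the standard lemma that the maximal coideal of $T(V)$ in $T^{\geq2}(V)$ contains every coideal sitting in $T^{\geq2}(V)$, giving $\ker S\subseteq\mathfrak{I}(V)$ and hence, with the first paragraph, equality degree by degree: $\mathfrak{I}(V)\cap V^{\ts n}=\ker(S_n)$. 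Passing to the quotient yields $\mathfrak{N}(V)_n\cong V^{\ts n}/\ker(S_n)$, which is the assertion. If one prefers a self-contained route avoiding the reference to $S_\sigma(V)$, the factorization $S_n=T_2\cdots T_n$ together with the recursion for the coproduct on $T(V)$ can be used to prove the coideal property directly by induction on $n$; I expect that induction, not anything conceptual, to be where the bookkeeping is heaviest.
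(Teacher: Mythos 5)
The paper itself offers no proof of this proposition --- it is quoted from Rosso and Schauenburg --- so your argument has to stand on its own, and it has a genuine gap sitting exactly where you place the ``easy'' direction. The maximality of $\mathfrak{I}(V)$ among coideals contained in $T^{\geq 2}(V)$ says that every such coideal is contained in $\mathfrak{I}(V)$; applied to $\ker S$ this yields $\ker S\subseteq\mathfrak{I}(V)$ and nothing more. In your first paragraph you invoke precisely these two facts ($\ker S$ is a Hopf ideal inside $T^{\geq 2}(V)$, plus ``maximality'') to conclude the opposite inclusion $\mathfrak{I}(V)\subseteq\ker S$, while in your second and third paragraphs you correctly re-derive $\ker S\subseteq\mathfrak{I}(V)$ from the same premises. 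The substantive half of the statement, $\mathfrak{I}(V)\subseteq\ker S$, is therefore never established; the phrase ``$\mathfrak{N}(V)$ embeds as a subobject there'' is equivalent to what is to be proved and cannot serve as a premise.

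To close the gap you need one further idea, in either of two standard forms. (i) Show directly that $S_n$ annihilates every coideal $I\subseteq T^{\geq 2}(V)$: for homogeneous $x\in I\cap V^{\ts n}$, the component of the iterated coproduct $\Delta^{(n-1)}(x)$ of multidegree $(1,1,\dots,1)$ is exactly $S_n(x)$, while $\Delta^{(n-1)}(x)$ lies in $\sum_k T(V)^{\ts k}\ts I\ts T(V)^{\ts (n-1-k)}$, whose multidegree $(1,\dots,1)$ part vanishes because $I$ has no component in degree $0$ or $1$; hence $S_n(x)=0$ and $\mathfrak{I}(V)\subseteq\ker S$. (ii) Alternatively, use that the primitives of the deconcatenation coalgebra underlying $S_\sigma(V)$ are concentrated in $V$, so $T(V)/\ker S\cong \mathrm{Im}(S)$ has no primitive elements of degree $\geq 2$; a nonzero homogeneous element of minimal degree in the image of $\mathfrak{I}(V)$ inside $T(V)/\ker S$ would be such a primitive, again forcing $\mathfrak{I}(V)\subseteq\ker S$. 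Either supplement, combined with your (essentially correct) verification that $\ker S$ is a coideal contained in $T^{\geq 2}(V)$, gives $\mathfrak{I}(V)\cap V^{\ts n}=\ker(S_n)$ and hence the proposition. A minor further caveat: being a two-sided ideal is not automatic for an arbitrary coideal in $T^{\geq 2}(V)$ (the span of a single degree-$2$ primitive is a coideal but not an ideal); for $\ker S$ it follows from $S$ being an algebra morphism onto the shuffle side, which you have already invoked, so this point is only one of phrasing.
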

So to make more precise the structure of $\mathfrak{I}(V)$, it suffices to study each subspace $\ker(S_n)$. In the following part of this section, we want to characterize a part of elements in $\ker(S_n)$ and show that in cases of great interest, this part is the essential one for understanding the structure of $\ker(S_n)$.

\subsection{General assumption}\label{assumption}
From now on, assume that $n\geq 2$ is an integer. To study the structure of $\ker(S_n)$, we want first to concentrate on some essential "levels" in it.
\begin{definition}
Let $1<s<n$ be an integer and $i:\mathfrak{B}_s\ra \mathfrak{B}_n$ be an injection of groups. We call $i$ a positional embedding if there exists some integer $0\leq r\leq n-s$ such that for any $1\leq t\leq s-1$, $i(\s_t)=\s_{t+r}$.  
\end{definition}

\indent
For an element $v\in V^{\ts n}$, if $v\in\ker(S_n)$, there are two possibilities:
\begin{enumerate}
\item There exists some $2\leq s<n$ and some positional embedding of groups $\iota:\mathfrak{B}_s\hookrightarrow\mathfrak{B}_n$ such that $v$ is annihilated by $\iota(S_s)$;
\item For any $s$ and positional embedding $\iota$ as above, $v$ is not in $\ker(\iota(S_s))$.
\end{enumerate}
\indent
Elements falling in the case (2) are much more interesting in our framework. So we would like to give a more concrete assumption for the purpose of concentrating on such elements; here, we want to impose a somehow stronger restriction.
\par
Let $v\in V^{\ts n}$ be a non-zero element and $\mc[X_v]$ denote the $\mc[\fb_n]$-submodule of $V^{\ts n}$ generated by $v$, that is to say, $\mc[X_v]=\mc[\fb_n].v$. Because $\mc[X_v]$ is a $\mc[\fb_n]$-module, $S_n:\mc[X_v]\ra\mc[X_v]$ is well defined.
\par
We fix this $v\in V^{\ts n}$ as above, the restriction on $v$ we want to impose is as follows:
\begin{definition}\label{Def:leveln}
An element $v\in V^{\ts n}$ is called of level $n$ if $S_nv=0$ and for any $2\leq s\leq n-1$ and any positional embedding $\iota:\fb_s\hookrightarrow\fb_n$, the equation $\iota(\theta_s)x=x$ has no solution in $\mc[X_v]$.
\end{definition}
\begin{proposition}\label{prop1}
If $v\in V^{\ts n}$ is a non-zero solution of equation $(1-\s_{n-1}^2\s_{n-2}\cdots\s_1)x=0$, then $\theta_n.v=v$.
\end{proposition}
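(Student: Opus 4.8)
The plan is to derive the identity $\theta_n.v = v$ directly from the factorization of $1-\theta_n$ recorded in Corollary \ref{1-thetan}. Recall from that corollary that
$$\left(\sum_{k=0}^{n-2}(\s_{n-1}^2\s_{n-2}\cdots\s_1)^k\right)(1-\s_{n-1}^2\s_{n-2}\cdots\s_1)=1-\theta_n$$
holds as an identity in $\mc[\fb_n]$, hence also as an identity of operators on $V^{\ts n}$. First I would apply both sides of this identity to the vector $v$. Since by hypothesis $v$ satisfies $(1-\s_{n-1}^2\s_{n-2}\cdots\s_1)x = 0$, the left-hand side applied to $v$ gives $0$: indeed the inner factor $(1-\s_{n-1}^2\s_{n-2}\cdots\s_1)$ annihilates $v$ first, and then the polynomial in $\s_{n-1}^2\s_{n-2}\cdots\s_1$ acts on $0$. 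Therefore $(1-\theta_n).v = 0$, which is precisely $\theta_n.v = v$.

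In slightly more detail, writing $\tau = \s_{n-1}^2\s_{n-2}\cdots\s_1 \in \mc[\fb_n]$ so that $(1-\tau).v = 0$, we have
$$(1-\theta_n).v = \left(\sum_{k=0}^{n-2}\tau^k\right)(1-\tau).v = \left(\sum_{k=0}^{n-2}\tau^k\right).\bigl((1-\tau).v\bigr) = \left(\sum_{k=0}^{n-2}\tau^k\right).0 = 0,$$
where the crucial point is simply that $\mc[\fb_n]$ acts associatively on $V^{\ts n}$, so the product of operators $\left(\sum_{k=0}^{n-2}\tau^k\right)$ and $(1-\tau)$ may be applied to $v$ in that order. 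This already yields the claim; the nonvanishing hypothesis on $v$ is not even needed for this direction (it is presumably stated because in the surrounding development one wants $v$ itself to be a genuine level-$n$ element, and the relevant equation $\theta_s x = x$ should have no solution for $s < n$).

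The only genuine input here is Corollary \ref{1-thetan}, which in turn rests on the alternative presentation $\theta_n = \Delta_n^2 = (\s_{n-1}^2\s_{n-2}\cdots\s_1)^{n-1}$ established in the preceding proposition of Section \ref{4.1}, together with the elementary telescoping identity $\left(\sum_{k=0}^{n-2}a^k\right)(1-a) = 1 - a^{n-1}$ applied with $a = \s_{n-1}^2\s_{n-2}\cdots\s_1$. Thus there is no real obstacle: the argument is a one-line consequence of an already-proved factorization. If anything, the subtle point to be careful about is bookkeeping in the case $n=2$, where $\theta_2 = \s_1^2$ and $\s_{n-1}^2\s_{n-2}\cdots\s_1$ degenerates to $\s_1^2$, so that the sum $\sum_{k=0}^{n-2}\tau^k$ is the empty-looking single term $k=0$, i.e.\ $1$, and the identity reads $1\cdot(1-\s_1^2) = 1-\theta_2$, which is trivially true; the general argument covers this case verbatim.
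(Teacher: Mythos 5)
Your proof is correct and is essentially identical to the paper's: both apply the factorization $\left(\sum_{k=0}^{n-2}(\s_{n-1}^2\s_{n-2}\cdots\s_1)^k\right)(1-\s_{n-1}^2\s_{n-2}\cdots\s_1)=1-\theta_n$ from Corollary \ref{1-thetan} to $v$ and conclude $(1-\theta_n)v=0$. Your additional remarks on the $n=2$ case and the dispensability of the nonvanishing hypothesis are accurate but not needed.
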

\begin{proof}
If $(1-\s_{n-1}^2\s_{n-2}\cdots\s_1)v=0$, from Corollary \ref{1-thetan}, 
$$0=\left(\sum_{k=0}^{n-2}(\s_{n-1}^2\s_{n-2}\cdots\s_1)^k\right)(1-\s_{n-1}^2\s_{n-2}\cdots\s_1)v=(1-\theta_n)v.$$
\end{proof}
\begin{remark}\label{rem}
As $V^{\ts n}$ is a $\mc[\fb_n]$-module, we can define $\mathcal{H}\subset V^{\ts n}$ as the subspace of $V^{\ts n}$ formed by eigenvectors of $\theta_n$ with eigenvalue $1$ (that is to say, $\mathcal{H}=\{w\in V^{\ts n}|\ \theta_nw=w\}$).
\par
As $\theta_n\in Z(\fb_n)$, for any $w\in \mathcal{H}$ and $Y\in\mc[\fb_n]$, we have $\theta_nYw=Yw$, thus $\mathcal{H}$ is a $\mc[\fb_n]$-submodule of $V^{\ts n}$. It means that if $v\in \mathcal{H}$, then $\mc[X_v]\subset \mathcal{H}$.
\end{remark}
\begin{lemma}\label{2->1}
Let $v\in V^{\ts n}$ be a non-zero element of level $n$. Then for any $2\leq i\leq n-1$ and any positional embedding $\iota_i:\mathfrak{B}_i\hookrightarrow\mathfrak{B}_n$, $\iota_i(S_i)x=0$ has no solution on $\mc[X_v]$.
\end{lemma}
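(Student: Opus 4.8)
The plan is to show the contrapositive-style implication: if some $\iota_i(S_i)x = 0$ has a nonzero solution on $\mc[X_v]$, then some $\iota_j(\theta_j)y = y$ has a nonzero solution on $\mc[X_v]$, contradicting the hypothesis that $v$ is of level $n$. The key algebraic input is the factorization of $S_i$ inherited from Proposition \ref{decomposition:sn}, namely $S_i = T_2 T_3 \cdots T_i$, together with the refinement from Proposition \ref{decomposition:tn} and Corollary \ref{1-thetan} that relates the top factor $T_i$ to the central element $\theta_i$. So the first step is to transport everything along the positional embedding $\iota_i$: applying $\iota_i$ to the identity $S_i = T_2 \cdots T_i$ gives $\iota_i(S_i) = \iota_i(T_2) \cdots \iota_i(T_i)$ inside $\mc[\fb_n]$, and each $\iota_i(T_m)$ is again a sum of the appropriate braid words shifted into the strands indexed by the embedding.

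Next I would run a descent on the number of factors. Suppose $w \in \mc[X_v]$ is nonzero with $\iota_i(S_i) w = 0$. Set $w' = \iota_i(T_3) \cdots \iota_i(T_i) w$. Either $w' = 0$, in which case $w$ is annihilated by a product of fewer factors — in fact by $\iota'(S_{i-1})$ for a suitable positional embedding $\iota'$ of $\fb_{i-1}$ (here one uses that $T_2 \cdots T_{i-1}$ is exactly $S_{i-1}$, again by Proposition \ref{decomposition:sn}) — and we may induct downward on $i$; the base case $i = 2$ reads $\iota(T_2) w = (1 + \s_{r+1})w = 0$, so $\s_{r+1}^2 w = w$, i.e. $\iota_2(\theta_2) w = w$, the desired conclusion. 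Or $w' \neq 0$, and then $\iota_i(T_2) w' = 0$; but $\iota_i(T_2) = 1 + \s_{r+1}$ for the relevant shift $r$, so $\s_{r+1} w' = -w'$, hence $\s_{r+1}^2 w' = w'$, which is exactly $\iota'(\theta_2) w' = w'$ for the positional embedding $\iota'$ of $\fb_2$ onto the strands $\{r+1, r+2\}$. Since $w' \in \mc[X_v]$ (it is obtained from $w$ by acting with an element of $\mc[\fb_n]$, and $\mc[X_v] = \mc[\fb_n].v$ is a submodule), this contradicts $v$ being of level $n$, provided $2 \leq n-1$, i.e. $n \geq 3$; the case $n = 2$ is vacuous since there is no $i$ with $2 \leq i \leq n-1$.

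The step I expect to be the main obstacle is the bookkeeping in the descent: one must check that when a proper prefix $\iota_i(T_2)\cdots\iota_i(T_k)$ of the factored $S_i$ annihilates an element of $\mc[X_v]$, this prefix genuinely equals $\iota'(S_k)$ for an honest positional embedding $\iota'\colon \fb_k \hookrightarrow \fb_n$ — that is, the shift indices line up so that Proposition \ref{decomposition:sn} applies verbatim to the smaller braid group. This is a matter of matching the definition of $T_m = 1 + \s_{m-1} + \s_{m-1}\s_{m-2} + \cdots + \s_{m-1}\cdots\s_1$ against its image under $\iota_i$ and observing that the generators involved are precisely $\s_{r+1},\dots,\s_{r+k-1}$ for the appropriate $r$; once this alignment is made explicit the induction closes cleanly. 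An alternative, perhaps cleaner, route avoiding the inductive relabeling: observe directly that $\iota_i(S_i) = \iota_i(S_{i}')$-type identities plus Corollary \ref{1-thetan} exhibit $\iota_i(S_i)$ as (a left multiple of) $(1 - \iota_i(\theta_i))(1 - \iota_{i-1}(\theta_{i-1}))\cdots(1 - \s_{r+1}^2)$ after multiplying by $\iota_i(P_i)$ and then $\iota_i(L_i)$; if $\iota_i(S_i)w = 0$ one pushes $w$ through these operators and catches the first stage at which a factor $(1 - \iota_j(\theta_j))$ kills the running vector, yielding $\iota_j(\theta_j)y = y$ with $y \in \mc[X_v]$ nonzero and $2 \le j \le i \le n-1$. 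Either way the conclusion is the contradiction with Definition \ref{Def:leveln}.
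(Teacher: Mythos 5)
Your overall strategy --- contradicting the level $n$ hypothesis by manufacturing a $\theta$-fixed vector out of a kernel vector of $\iota_i(S_i)$, via the factorization $S_i=T_2\cdots T_i$ --- is the right one, and the branch $w'\neq 0$ (where the leftmost factor $\iota_i(T_2)=1+\s_{r+1}$ kills $w'$, so $\s_{r+1}^2w'=w'$) is handled correctly. But the descent breaks in the branch $w'=0$: the identity you invoke there is false. One has $T_2\cdots T_{i-1}=S_{i-1}$ as the symmetrizer of the \emph{first} $i-1$ strands (that is, $S_i=S_{i-1}T_i$), whereas your $w'=\iota_i(T_3)\cdots\iota_i(T_i)w$ involves the product $T_3\cdots T_i$, which is not $\iota'(S_{i-1})$ for any positional embedding $\iota'$ --- already for $i=3$, $T_3=1+\s_2+\s_2\s_1$ has three terms while any embedded $S_2$ has two. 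So the induction does not close. Even if you repair this by peeling the \emph{rightmost} factor instead (set $u=\iota_i(T_i)w$; if $u\neq0$ then $\iota'(S_{i-1})u=0$ and you may induct), the descent still terminates at the statement that some $\iota_i(T_j)$, $3\le j\le i$, has a nonzero kernel vector in $\mc[X_v]$, and your primary route never explains how such a $T_j$ yields a $\theta$-fixed vector; only the factor $T_2$ is treated.

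That missing conversion is precisely the heart of the paper's argument. Since $\mc[X_v]$ is finite dimensional, $\iota_i(S_i)$ singular forces $\det(\iota_i(T_j))=0$ for some $2\le j\le i$ by multiplicativity of the determinant; then $\iota_i(T_j)\iota_i(P_j)=\iota_i(T_j')$ (Proposition \ref{decomposition:tn}) gives $\det(\iota_i(T_j'))=0$, the explicit factorization of $T_j'$ into the factors $1-\s_{j-1}^2\s_{j-2}\cdots\s_k$ forces one of these to be singular, and Proposition \ref{prop1} turns a kernel vector of that factor into a solution of $\iota(\theta_{j-k+1})x=x$, the desired contradiction. Your ``alternative route'' gestures at this, but the product identity proposed there is also off: $S_iP_i=T_2\cdots T_{i-1}T_i'$, and the correcting factor $L_i$ would have to be inserted between $T_{i-1}$ and $T_i'$, not multiplied on the left of the whole product, so one cannot exhibit $S_i$ times invertible factors as $(1-\theta_i)(1-\theta_{i-1}^{\iota_{i-1}})\cdots(1-\theta_2^{\iota_2})$ in one stroke; the determinant argument is what allows the factors to be treated one at a time.
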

\begin{proof}

Let $\iota_i:\mathfrak{B}_i\hookrightarrow\mathfrak{B}_n$ be a positional embedding such that $\iota_i(S_i)x=0$ has a solution in $\mc[X_v]$. Then $\iota_i(S_i)=\iota_i(T_2)\cdots \iota_i(T_i)\in\mc[\fb_n]$. The equation $\iota_i(S_i)x=0$ has a solution in $\mc[X_v]$ means that $det(\iota_i(S_i))=0$, so there exists some $2\leq j\leq i$ such that $det(\iota_i(T_j))=0$.
\par
Because $\iota_i(T_j)\iota_i(P_j)=\iota_i(T_j')$, we obtain that $det(\iota_i(T_j'))=0$. From the definition of $T_j'$, there exists some $1\leq k\leq j-1$ such that
$$det(1-\iota_i(\s_{j-1}^2\s_{j-2}\cdots\s_k))=0.$$
So we can choose another positional embedding $\iota:\fb_{j-k+1}\ra\fb_n$ such that for the action of $\mc[\fb_n]$ on $\mc[X_v]$, 
$$det(1-\iota(\s_{j-k}^2\s_{j-k-1}\cdots\s_1))=0.$$
So, from Proposition \ref{prop1}, $\iota(\theta_{j-k+1})x=x$ has a non-zero solution on $\mc[X_v]$, which contradicts to the assumption that $v$ is of level $n$.
\end{proof}

\subsection{Solutions}\label{solutions}
Fix some $n\geq 2$, we want to solve the equation $S_nx=0$ on $\mc[X_v]$ for some non-zero element $v\in V^{\ts n}$.
\par
We define an element in $\mathbb{C}[\mathfrak{B}_n]$:
$$X=(1-\s_{n-1}^2\s_{n-2}\cdots\s_2)\cdots(1-\s_{n-1}^2\s_{n-2}\cdots\s_3)\cdots(1-\s_{n-1}^2\s_{n-2})(1-\s_{n-1}^2).$$
\begin{proposition}\label{prop2}
If $v\in V^{\ts n}$ is a non-zero element of level $n$, then $X$ is invertible on $\mc[X_v]$.
\end{proposition}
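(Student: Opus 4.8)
The plan is to reduce the invertibility of $X$ on $\mc[X_v]$ to a statement about the central elements $\theta_s$, exactly as was done in Proposition \ref{prop1} and Lemma \ref{2->1}. Observe that $X$ is a product of factors of the form $(1-\s_{n-1}^2\s_{n-2}\cdots\s_k)$ for $2\leq k\leq n-1$ (it differs from $T_n'$ only in that the factor $(1-\s_{n-1}^2\s_{n-2}\cdots\s_1)$ is missing). A product of operators on a finite-dimensional space is invertible if and only if each factor is invertible, so it suffices to show that for each $2\leq k\leq n-1$ the operator $(1-\s_{n-1}^2\s_{n-2}\cdots\s_k)$ is invertible on $\mc[X_v]$.

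First I would argue by contradiction: suppose some factor $(1-\s_{n-1}^2\s_{n-2}\cdots\s_k)$ is singular on $\mc[X_v]$, i.e.\ $\det(1-\iota(\s_{n-1}^2\s_{n-2}\cdots\s_k))=0$ for the obvious embedding. Then, as in the last step of the proof of Lemma \ref{2->1}, I would choose a positional embedding $\iota:\fb_{n-k+1}\hookrightarrow\fb_n$ identifying $\s_{n-1}^2\s_{n-2}\cdots\s_k$ with $\iota(\s_{n-k}^2\s_{n-k-1}\cdots\s_1)$, so that $\det(1-\iota(\s_{n-k}^2\s_{n-k-1}\cdots\s_1))=0$ on $\mc[X_v]$. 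Since $2\leq k\leq n-1$ we have $2\leq n-k+1\leq n-1$, so this is a genuine proper embedding. Now Proposition \ref{prop1}, applied inside the subgroup $\iota(\fb_{n-k+1})$ via Corollary \ref{1-thetan} (the corollary's identity holds verbatim after applying the embedding $\iota$, since it is a group-algebra identity), yields a non-zero vector $w\in\mc[X_v]$ with $\iota(\theta_{n-k+1})w=w$. This contradicts the defining property of level-$n$ elements in Definition \ref{Def:leveln}, which forbids any solution of $\iota(\theta_s)x=x$ in $\mc[X_v]$ for $2\leq s\leq n-1$ and any positional embedding $\iota$. Hence every factor is invertible, and so is $X$.

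The one point requiring a little care — and what I expect to be the main (minor) obstacle — is the bookkeeping around Proposition \ref{prop1}: that proposition is stated for the ambient group $\fb_m$ and the specific generator $\s_{m-1}^2\s_{m-2}\cdots\s_1$, so I must check that its proof transports through an arbitrary positional embedding $\iota$, i.e.\ that the telescoping identity of Corollary \ref{1-thetan} survives under $\iota$ and that the vector it produces genuinely lies in $\mc[X_v]$ (it does, because $\mc[X_v]$ is a $\mc[\fb_n]$-submodule, hence stable under all the group-algebra elements appearing). A secondary point is making sure the factor indices of $X$ really do range over $2\leq k\leq n-1$ and not $k=1$; this is exactly why $X$, unlike $T_n'$, can be invertible — the missing $k=1$ factor is the one whose vanishing would force $\theta_n.v=v$, which is not excluded (indeed, by Proposition \ref{prop1} and Remark \ref{rem}, solutions of $S_nv=0$ of level $n$ typically do satisfy $\theta_n.v=v$). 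Once these verifications are in place the argument is complete.
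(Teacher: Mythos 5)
Your argument is correct and is essentially the paper's own proof: both reduce invertibility of $X$ to invertibility of each factor $(1-\s_{n-1}^2\cdots\s_k)$ via determinants, and both derive a contradiction with the level-$n$ condition by producing, through a positional embedding and Proposition \ref{prop1}, a solution of $\iota(\theta_{n-k+1})x=x$ in $\mc[X_v]$. The extra bookkeeping you flag (transporting the telescoping identity of Corollary \ref{1-thetan} through $\iota$, and the index range $2\leq k\leq n-1$ excluding $k=1$) is sound and matches what the paper implicitly assumes.
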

\begin{proof}
We may view $X$ as an element in $End(\mc[X_v])$. If $X$ is not invertible, $det(X)=0$. From the definition, there must exist some term, say $(1-\s_{n-1}^2\s_{n-2}\cdots\s_i)$, for some $2\leq i\leq n-1$, having determinant $0$. So there exists some nonzero element $w\in\mc[X_v]$ such that $(1-\s_{n-1}^2\s_{n-2}\cdots\s_i)w=0$. But from Proposition \ref{prop1}, we may find some positional embedding $\iota:\fb_{n-i+1}\hookrightarrow\fb_n$ such that $\iota(\theta_{n-i+1})w=w$, which contradicts to the assumption that $v$ is of level $n$.
\end{proof}
The level $n$ assumption we are working with will give more information on solutions of equation $S_nx=0$.

\begin{proposition}\label{prop3}
Let $v\in V^{\ts n}$ be a non-zero element of level $n$.
\begin{enumerate}
\item
There exists a bijection between nonzero solutions of equation $T_n'x=0$ and of the equation $(1-\s_{n-1}^2\s_{n-2}\cdots\s_1)x=0$ in $\mc[X_v]$.
\item
Equations $S_nx=0$ and $T_nx=0$ have the same solutions in $\mc[X_v]$.
\end{enumerate}
\end{proposition}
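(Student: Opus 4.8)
The plan is to deduce both statements from invertibility facts already established — Proposition \ref{prop2} for part (1), Lemma \ref{2->1} for part (2) — by writing $T_n'$ and $S_n$ as products in $\mc[\fb_n]$ in which every factor but one is invertible on $\mc[X_v]$. For part (1), comparing the definitions of $T_n'$ and of $X$ factor by factor yields the identity
$$T_n'=(1-\s_{n-1}^2\s_{n-2}\cdots\s_1)\,X \qquad\text{in }\mc[\fb_n],$$
since $X$ consists exactly of the factors of $T_n'$ from the second onward. Because $v$ is of level $n$, Proposition \ref{prop2} shows that $X$ is invertible as an operator on $\mc[X_v]$. Hence, for $x\in\mc[X_v]$, we have $T_n'x=0$ precisely when $(1-\s_{n-1}^2\s_{n-2}\cdots\s_1)(Xx)=0$, so the linear automorphism $x\mapsto Xx$ of $\mc[X_v]$ carries $\ker(T_n')$ onto $\ker(1-\s_{n-1}^2\s_{n-2}\cdots\s_1)$, with inverse $y\mapsto X^{-1}y$; being a bijection of the whole module it preserves nonvanishing, and this is the bijection of nonzero solution sets asserted in (1).

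For part (2), I would use the decomposition $S_n=T_2T_3\cdots T_n$ of Proposition \ref{decomposition:sn}. Let $\iota:\fb_{n-1}\hookrightarrow\fb_n$ be the canonical positional embedding on the first $n-1$ strands (the case $r=0$); then $T_2\cdots T_{n-1}=\iota(S_{n-1})$, so that $S_n=\iota(S_{n-1})\,T_n$ in $\mc[\fb_n]$. By Lemma \ref{2->1} applied with $i=n-1$, the operator $\iota(S_{n-1})$ has no nonzero kernel on $\mc[X_v]$, and since $\mc[X_v]$ is finite-dimensional (because $V$ is), it is therefore invertible on $\mc[X_v]$. Consequently, for $x\in\mc[X_v]$, we have $S_nx=0$ if and only if $\iota(S_{n-1})(T_nx)=0$, that is, if and only if $T_nx=0$; the degenerate case $n=2$ is trivial since there $S_2=T_2$.

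I do not expect a genuine obstacle: the substantive work has been front-loaded into Proposition \ref{prop2} and Lemma \ref{2->1}, and what remains is to spot the two factorizations $T_n'=(1-\s_{n-1}^2\s_{n-2}\cdots\s_1)X$ and $S_n=\iota(S_{n-1})T_n$ and to invoke the elementary fact that an endomorphism of a finite-dimensional space with trivial kernel is invertible. The one place needing a little care is, in part (1), confirming that multiplication by $X$ gives an honest bijection between the two solution sets rather than an inclusion in one direction only; this is clear once one observes that $X$ is an automorphism of all of $\mc[X_v]$, so that $X^{-1}$ furnishes the inverse correspondence.
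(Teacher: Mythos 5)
Your proposal is correct and follows essentially the same route as the paper: part (1) is exactly the paper's argument (the factorization $T_n'=(1-\s_{n-1}^2\s_{n-2}\cdots\s_1)X$ together with the invertibility of $X$ from Proposition \ref{prop2}), and part (2) uses the same decomposition $S_n=T_2\cdots T_{n-1}T_n$ and Lemma \ref{2->1}, merely phrased via ``trivial kernel on a finite-dimensional space implies invertible'' where the paper argues by direct contradiction.
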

\begin{proof}
\begin{enumerate}
\item
From Proposition \ref{prop2}, $X^{-1}:\mc[X_v]\ra\mc[X_v]$ is well defined. So this proposition comes from the identity: $T_n'=(1-\s_{n-1}^2\s_{n-2}\cdots\s_1)X$.
\item
Let $w$ be a non-zero solution of $T_nx=0$, then from Proposition \ref{decomposition:sn}, $S_nw=0$.
\par
Conversely, let $u$ be a non-zero solution of $S_nx=0$. If $T_nu\neq 0$, again from Proposition \ref{decomposition:sn}, $T_nu$ will be a non-zero solution of equation $T_2\cdots T_{n-1}x=0$ on $\mc[X_v]$, so $S_{n-1}x=0$ has a non-zero solution on $\mc[X_v]$, contradicts Lemma \ref{2->1} above.
\end{enumerate}
\end{proof}

\indent
Recall that $P_n=T_{P_{1,n}}\in\mc[\fb_n]$ as defined in the last section, $P_n\in End(\mc[X_v])$.
\par 
Now, let $w\in\ker(S_n)\cap Im(P_n)$ be a non-zero element of level $n$. Then from Lemma \ref{2->1}, $w$ satisfies $T_nw=0$. Moreover, because it is in $Im(P_n)$, we can choose some $w'$ such that $P_n(w')=w$, then 
$$T_n'w'=T_nw=0.$$
From the identity $T_n'=(1-\s_{n-1}^2\s_{n-2}\cdots\s_1)X$, $Xw'$ is a solution of the equation $(1-\s_{n-1}^2\s_{n-2}\cdots\s_1)x=0$, so from Proposition \ref{prop1}, $\theta_nXw'=Xw'$. This discussion gives the following proposition.
\begin{proposition}
Let $w\in \ker(S_n)\cap Im(P_n)$ be an element of level $n$. Then $\theta_nw=w$.
\end{proposition}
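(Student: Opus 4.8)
The plan is to exhibit $w$ as an element of the $\theta_n$-eigenspace $\mathcal{H}=\{u\in V^{\ts n}\mid\theta_n u=u\}$ introduced in Remark \ref{rem}. The route is the one already used in the proofs of Propositions \ref{prop2} and \ref{prop3}: I would translate the hypotheses $S_nw=0$ and $w\in Im(P_n)$, by means of the factorizations $T_n'=T_nP_n=(1-\s_{n-1}^2\s_{n-2}\cdots\s_1)X$, into a solution of the equation $(1-\s_{n-1}^2\s_{n-2}\cdots\s_1)x=0$, and then invoke Proposition \ref{prop1}. The level $n$ hypothesis enters twice: once to replace the symmetrizer $S_n$ by $T_n$, and once --- decisively --- to make $X$ invertible on $\mc[X_w]$, which is what will let me cancel the auxiliary factor $X$ at the very end.

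In detail: since $w\in\ker(S_n)$ is of level $n$, Proposition \ref{prop3}(2) gives $T_nw=0$. As $w\in Im(P_n)\subset\mc[X_w]$, I would fix $w'\in\mc[X_w]$ with $P_nw'=w$; then Proposition \ref{decomposition:tn} yields
$$T_n'w'=T_nP_nw'=T_nw=0,$$
and, rewriting $T_n'=(1-\s_{n-1}^2\s_{n-2}\cdots\s_1)X$, this says $(1-\s_{n-1}^2\s_{n-2}\cdots\s_1)(Xw')=0$. One may assume $w\ne0$, for otherwise the claim is empty; then $w'\ne0$, and since $X$ is invertible on $\mc[X_w]$ by Proposition \ref{prop2}, also $Xw'\ne0$. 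Hence Proposition \ref{prop1} applies and gives $\theta_n(Xw')=Xw'$, i.e. $(1-\theta_n)(Xw')=0$.

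The remaining task --- passing from $Xw'$ back to $w$ --- is the one point I expect to need genuine care. Since $\theta_n\in Z(\fb_n)$ it commutes with $X$, so $X\bigl((1-\theta_n)w'\bigr)=(1-\theta_n)(Xw')=0$; because $w'\in\mc[X_w]$ and $\mc[X_w]$ is a $\mc[\fb_n]$-submodule of $V^{\ts n}$, the element $(1-\theta_n)w'$ again lies in $\mc[X_w]$, on which $X$ is injective, so $(1-\theta_n)w'=0$. Using centrality of $\theta_n$ once more, $(1-\theta_n)w=(1-\theta_n)P_nw'=P_n\bigl((1-\theta_n)w'\bigr)=0$, that is $\theta_nw=w$. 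The only delicate ingredient is the cancellation of $X$: it is licit precisely because the level $n$ hypothesis rules out solutions of $\iota(\theta_s)x=x$ in $\mc[X_w]$ for $2\le s\le n-1$, which by Propositions \ref{prop1} and \ref{prop2} is exactly what forces every factor of $X$, and so $X$ itself, to be invertible on $\mc[X_w]$.
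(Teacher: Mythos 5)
Your proof is correct and follows essentially the same route as the paper: the discussion preceding the proposition establishes $T_nw=0$, $T_n'w'=0$ and $\theta_nXw'=Xw'$ exactly as you do, and the paper's proof then transfers the fixed-point property from $Xw'$ back to $w'$ and to $w=P_nw'$ using the centrality of $\theta_n$ and the invertibility of $X$ from Proposition \ref{prop2}, which is precisely your final cancellation step written contrapositively. The one caveat (shared with, and left implicit in, the paper) is that the preimage $w'$ must be taken inside a cyclic $\mc[\fb_n]$-module on which $X$ is known to be injective --- the inclusion $Im(P_n)\subset\mc[X_w]$ you invoke is not literally true, and one should instead work throughout in the ambient module $\mc[X_v]$ fixed at the start of the subsection.
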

\begin{proof}
From the definition of $w'$, if $\theta_n$ fixes $w'$, then it fixes $w$. So if $\theta_nw\neq w$, then it does not fix $w'$ and then $Xw'$ (see Remark \ref{rem}), which is a contradiction.
\end{proof}
\begin{remark}\label{remark}
Let $\mathcal{H}$ denote the eigenspace of $\theta_n$ corresponding to the eigenvalue $1$ as in the Remark \ref{rem} above. If we let $E_n$ denote the set of elements in $Im(P_n)$ with level $n$ in $V^{\ts n}$, then the proposition above implies that $E_n\subset P_n(\mathcal{H})$.
\end{remark}
\indent
We have constructed solutions of equation $\theta_nx=x$ on $\mc[X_v]$ from some kinds of elements in $\ker(S_n)\cap Im(P_n)$. Now we proceed to consider the construction in the opposite direction.
\par
Let $w\in\mc[X_v]$ be a solution of $\theta_nx=x$. If 
$$u=\left(\sum_{k=0}^{n-2}(\s_{n-1}^2\s_{n-2}\cdots\s_1)^k\right)w\neq 0,$$ 
it will be a solution of the equation $(1-\s_{n-1}^2\s_{n-2}\cdots\s_1)x=0$, then $X^{-1}u$ is a solution of the equation $T_n'x=0$ and $P_nX^{-1}u$ will be a non-trivial solution of $S_nx=0$ if it is not zero; moreover, it is in $Im(P_n)$, from which we obtain an element in $\ker(S_n)\cap Im(P_n)$.
\par
There are some possibilities for the appearance of zero elements when passing from the solutions of $\theta_nx=x$ to those of $S_nx=0$. The appearance of zeros mostly comes from the fact that an element satisfying $\theta_nx=x$ may be the solution of $\iota_s(\theta_s)x=x$ for some $2\leq s\leq n-1$ and some positional embedding $\iota_s:\mathfrak{B}_s\hookrightarrow\mathfrak{B}_n$. 
\par
The subspace $\ker(S_n)\cap Im(P_n)$ is sufficiently important, as will be shown in the next subsection.

\subsection{Properties of $\ker(S_n)\cap Im(P_n)$}
In this subsection, suppose that $n\geq 2$ is an integer.
\par
Instead of $End(\mc[X_v])$, it is better in this subsection to view $S_n,P_n$ as elements in $End(V^{\ts n})$. We want to show that $\ker(S_n)\cap Im(P_n)$ contains all primitive elements and in some special cases (for example, the diagonal case), it generates $\ker(S_n)$.
\begin{proposition}\label{primitive}
Let $v\in V^{\ts n}$ be a homogeneous primitive element of degree $n$. Then $v\in\ker(S_n)\cap Im(P_n)$.
\end{proposition}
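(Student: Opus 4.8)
The plan is to check the two conditions $v\in Im(P_n)$ and $v\in\ker(S_n)$ separately. The key remark for the first one is that on $V^{\ts n}$ the Dynkin operator $\Phi$ of Subsection \ref{3.2} coincides with $P_n$, so the convolution identity of Theorem \ref{theorem1} can be read as an eigenvalue statement once it is evaluated on a primitive element.

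For $v\in Im(P_n)$ I would apply Theorem \ref{theorem1} directly to $v$. Since $v$ is primitive of degree $n$ we have $\Delta(v)=v\ts 1+1\ts v$, hence the convolution product collapses to
$$(\Phi\ast id)(v)=\Phi(v)\cdot 1+\Phi(1)\cdot v=\Phi(v)=P_n(v),$$
using $\Phi(1)=0$ and $\Phi|_{V^{\ts n}}=T_{P_{1,n}}=P_n$. On the other hand Theorem \ref{theorem1}, i.e. $(\Phi\ast id)(v)=\mathcal{N}(v)$, gives $(\Phi\ast id)(v)=nv$. Comparing the two expressions yields $P_n(v)=nv$, so $v=\frac{1}{n}P_n(v)\in Im(P_n)$; in particular $v$ is an eigenvector of $P_n$ for the eigenvalue $n$, which is the fact advertised in the introduction.

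For $v\in\ker(S_n)$ I would observe that $\mathbb{C}v$ is a coideal of $T(V)$ contained in $T^{\geq 2}(V)$, since $\ve(v)=0$ (as $n\geq 2$) and $\Delta(v)=v\ts 1+1\ts v\in\mathbb{C}v\ts T(V)+T(V)\ts\mathbb{C}v$. By the maximality in the definition of $\mathfrak{I}(V)$ we get $\mathbb{C}v\subseteq\mathfrak{I}(V)$, and since $\mathfrak{I}(V)$ is a homogeneous ideal with $\mathfrak{I}(V)\cap V^{\ts n}=\ker(S_n)$ by Proposition \ref{schauenburg}, this gives $v\in\ker(S_n)$. (Equivalently, the graded braided Hopf algebra quotient $T(V)\ra\mathfrak{N}(V)$ sends $v$ to a primitive element of degree $n\geq 2$ in $\mathfrak{N}(V)$, which must be zero because $\mathfrak{N}(V)_1=V$ is the set of all primitive elements of $\mathfrak{N}(V)$.) Combining the two parts gives $v\in\ker(S_n)\cap Im(P_n)$. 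No genuine obstacle arises here; the only points needing care are the bookkeeping of the convolution product on a primitive element and the identification $\Phi|_{V^{\ts n}}=P_n$, the remaining facts being immediate consequences of results already established.
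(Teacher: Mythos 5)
Your proposal is correct and follows essentially the same route as the paper: the membership $v\in Im(P_n)$ is obtained exactly as in the paper's proof by evaluating the convolution identity $(\Phi\ast id)(v)=nv$ of Theorem \ref{theorem1} on the primitive element $v$ to get $P_n(v)=nv$, and the membership $v\in\ker(S_n)$ is the same appeal to the definition of the Nichols algebra together with Proposition \ref{schauenburg} that the paper makes, merely with the details (that $\mathbb{C}v$ is a coideal in $T^{\geq 2}(V)$, hence lies in $\mathfrak{I}(V)$ by maximality) written out.
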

\begin{proof}
The fact $v\in\ker(S_n)$ is a corollary of the definition of Nichols algebra and Proposition \ref{schauenburg}. So it suffices to show that $v\in Im(P_n)$.
\par
The element $v$ is primitive means that $\Delta(v)=v\ts 1+1\ts v$. From Theorem \ref{theorem1}, $\Phi\ast id=\mathcal{N}$, so
$$nv=\Phi\ast id(v)=m\circ(\Phi\ts id)\Delta(v)=\Phi(v),$$
and then
$$v=\frac{1}{n}\Phi(v)=\frac{1}{n}P_n(v)\in Im(P_n).$$
\end{proof}
\indent
The second property we want to establish is that in the diagonal case, these subspaces $\ker(S_n)\cap Im(P_n)$ will generate the ideal (also coideal) $\mathfrak{I}(V)$.
\par
Recall that from the definition of Nichols algebra and Proposition \ref{schauenburg}, the subspace 
$$\mathfrak{I}(V)=\bigoplus_{n\geq 2}\ker(S_n)\subset T(V)$$
is a maximal coideal contained in $T^{\geq 2}(V)$. Moreover, it is a homogeneous ideal.
\par
Let $J\subset T^{\geq 2}(V)$ be a coideal in ${}^H_H\mathcal{YD}$ containing the subspace
$$\bigoplus_{n\geq 2}\left(\ker(S_n)\cap Im(P_n)\right).$$
Such a coideal does exist as $\mathfrak{I}(V)$ satisfies these conditions.

\begin{proposition}\label{generate}
Let $T(V)$ be of diagonal type. Then the ideal generated by $J$ in $T(V)$ is $\mathfrak{I}(V)$.
\end{proposition}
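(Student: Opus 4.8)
The plan is to prove the two inclusions separately. The inclusion $(J)\subseteq\mathfrak{I}(V)$ is immediate: every generating subspace $\ker(S_n)\cap Im(P_n)$ lies in $\ker(S_n)=\mathfrak{I}(V)\cap V^{\ts n}$ by Proposition~\ref{schauenburg}, and $\mathfrak{I}(V)$ is a two-sided ideal. For the reverse inclusion it suffices to treat the smallest admissible $J$, namely the $\mathbb{N}$-homogeneous subspace $J_0=\bigoplus_{n\geq 2}(\ker(S_n)\cap Im(P_n))$; write $I=(J_0)$ for the ideal it generates. Since $\mathfrak{I}(V)=\bigoplus_{n\geq 2}\ker(S_n)$, I would show $\ker(S_n)\subseteq I$ for all $n\geq 2$ by induction on $n$. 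The base case $n=2$ is a direct computation: on $V^{\ts 2}$ one has $S_2=id+\s_1$ and $P_2=id-\s_1$, so $(id+\s_1)v=0$ forces $v=\frac12 P_2(v)$; hence $\ker(S_2)=\ker(S_2)\cap Im(P_2)\subseteq J_0\subseteq I$.

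For the inductive step, assume $\ker(S_m)\subseteq I$ for $2\leq m\leq N-1$, and let $I'$ be the ideal of $T(V)$ generated by $\bigoplus_{m=2}^{N-1}\ker(S_m)$, so that $I'\subseteq I\subseteq\mathfrak{I}(V)$ and $I'\cap V^{\ts m}=\ker(S_m)$ for every $m\leq N-1$. The first point to establish is that $I'$ is a \emph{biideal}: for a generator $x\in\ker(S_m)$ with $m\leq N-1$, the relation $\Delta(\mathfrak{I}(V))\subseteq\mathfrak{I}(V)\ts T(V)+T(V)\ts\mathfrak{I}(V)$ together with $\mathbb{N}$-homogeneity forces each bihomogeneous component of $\Delta(x)$ of bidegree $(i,m-i)$ with $1\leq i\leq m-1$ into $\ker(S_i)\ts V^{\ts(m-i)}+V^{\ts i}\ts\ker(S_{m-i})\subseteq I'\ts T(V)+T(V)\ts I'$, while $x\ts 1+1\ts x\in I'\ts T(V)+T(V)\ts I'$ because $x\in I'$; since $\Delta$ is an algebra morphism this propagates to all of $I'$, and $\ve(I')=0$. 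Hence $B':=T(V)/I'$ is a connected $\mathbb{N}$-graded braided bialgebra, so it is automatically a braided Hopf algebra, and the quotient map $q'\colon T(V)\to B'$ is a morphism of braided Hopf algebras. Because $I'$ is a homogeneous biideal, Corollary~\ref{NS} (applied exactly as in the proof that $\Phi(\mathfrak{I}(V))\subseteq\mathfrak{I}(V)$) shows that $\Phi$ descends to a linear map $\Phi\colon B'\to B'$ with $q'\circ\Phi=\Phi\circ q'$, and Theorem~\ref{theorem1} descends to the identity $\Phi\ast id=\mathcal{N}$ in $End(B')$.

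Now consider the image $\overline{\ker(S_N)}\subseteq B'(N)$ of $\ker(S_N)\subseteq V^{\ts N}$. Since $I'\cap V^{\ts m}=\ker(S_m)$ for every $m<N$, expanding $\Delta_{T(V)}$ on a lift and applying $(q')^{\ts 2}$ annihilates every mixed bihomogeneous component, so each element of $\overline{\ker(S_N)}$ is primitive in $B'$. For such a primitive $\bar y$, the identity $\Phi\ast id=\mathcal{N}$ (evaluated on a primitive element, using $\Phi(1)=0$) gives $\Phi(\bar y)=N\bar y$. Lift $\bar y$ to $y\in\ker(S_N)\subseteq V^{\ts N}$; then $\Phi(y)=P_N(y)$, and $\Phi(\mathfrak{I}(V))\subseteq\mathfrak{I}(V)$ gives $P_N(y)\in\mathfrak{I}(V)\cap V^{\ts N}=\ker(S_N)$, so $z:=\frac1N P_N(y)\in\ker(S_N)\cap Im(P_N)\subseteq J_0$. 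Applying $q'$ yields $q'(z)=\frac1N\Phi(\bar y)=\bar y$, hence $y-z\in I'\cap V^{\ts N}$ and $y=z+(y-z)\in J_0+I'\subseteq I$. This proves $\ker(S_N)\subseteq I$, closes the induction, and gives $\mathfrak{I}(V)\subseteq I=(J_0)\subseteq(J)$, whence $(J)=\mathfrak{I}(V)$.

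The step I expect to be the main obstacle is verifying that $I'$ is a \emph{coideal} (equivalently, that the ideals built from the lower-degree relations are biideals), since it is precisely this that makes $B'$ a braided Hopf algebra and allows $\Phi$ and the identity $\Phi\ast id=\mathcal{N}$ to be transported into $B'$; this is why the argument must be organized degree by degree, peeling off one homogeneous layer of $\mathfrak{I}(V)$ at a time, rather than applied to $(J)$ all at once. A secondary technical point requiring care is confirming that $\Phi$ genuinely descends to $B'$ and commutes with $q'$, so that Theorem~\ref{theorem1} may be used inside $B'$ to force the bottom-degree relations to be images of elements of $\ker(S_N)\cap Im(P_N)$.
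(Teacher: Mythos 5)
Your argument is correct in substance but follows a genuinely different route from the paper. The paper's proof is global: it sets $K=(J)$, observes that the ideal generated by a coideal in $T^{\geq 2}(V)$ is again a coideal in $T^{\geq 2}(V)$, so $K\subseteq\mathfrak{I}(V)$ by maximality, and then invokes a lemma of Heckenberger (stated for the diagonal type) reducing everything to showing that $T(V)/K$ has no primitives in degree $\geq 2$; that last point is settled by the same computation you perform, namely $P_n(v)=(\mathcal{N}\ast S)(v)\in nv+K$ for $v$ primitive modulo $K$, so $v\in\ker(S_n)\cap Im(P_n)+K\subseteq K$. You instead peel off $\mathfrak{I}(V)$ one degree at a time, working in the intermediate quotients $B'=T(V)/I'$; this amounts to re-proving Heckenberger's lemma inline (the induction showing that a Hopf ideal in $T^{\geq 2}(V)$ containing $\ker(S_m)$ for $m<N$ forces the images of $\ker(S_N)$ to be primitive is exactly the content of that lemma). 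What your version buys is self-containedness and, notably, independence from the diagonal hypothesis: nothing in your induction uses diagonality, whereas the paper's proof inherits that restriction from the quoted lemma. What the paper's version buys is brevity, and it avoids having to verify by hand that the intermediate $I'$ are bi-ideals (your identification of the bidegree $(i,m-i)$ components of $\Delta(x)$ with $\ker(S_i)\ts V^{\ts(m-i)}+V^{\ts i}\ts\ker(S_{m-i})$, and the fact that the $\ker(S_m)$ are Yetter-Drinfel'd submodules so that $I'\ts T(V)+T(V)\ts I'$ is an ideal of $T(V)\underline{\ts}T(V)$, are both correct but deserve the care you give them).

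One small gap: your first inclusion $(J)\subseteq\mathfrak{I}(V)$ is not immediate as stated. The hypothesis is that $J$ is \emph{any} coideal in $T^{\geq 2}(V)$ containing $\bigoplus_{n\geq 2}(\ker(S_n)\cap Im(P_n))$; it is not generated by those subspaces, so your argument only gives $(J_0)\subseteq\mathfrak{I}(V)$. For general $J$ you need the paper's maximality argument: the ideal generated by a coideal contained in $T^{\geq 2}(V)$ is again a coideal contained in $T^{\geq 2}(V)$, hence lies in the maximal such coideal $\mathfrak{I}(V)$. This is a one-line fix, and the rest of your proof (which only uses $J\supseteq J_0$) is unaffected.
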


\begin{proof}
Let $K$ be the two-sided ideal generated by $J\subset T^{\geq 2}(V)$ in $T(V)$. Then $K$ is also an ideal in $T^{\geq 2}(V)$. As a two-sided ideal generated by a coideal, $K$ is also a coideal.  
From the maximality of $\mathfrak{I}(V)$, $K\subset\mathfrak{I}(V)$.
\par
We proceed to prove that $T(V)/K\cong\mathfrak{N}(V)$. For this purpose, the following lemma is needed.
\begin{lemma}[\cite{HecNote}]
Suppose that the Nichols algebra is of diagonal type. 
Let $K\subset T^{\geq 2}(V)$ be simultaneously an ideal, a coideal and an $H$-Yetter-Drinfel'd module. If all primitive elements in $T(V)/K$ are concentrated in $V$, then $T(V)/K\cong\mathfrak{N}(V)$.
\end{lemma}

From this lemma, it suffices to show that there is no non-zero primitive element of degree greater than $1$ in $T(V)/K$.
\par
Suppose that $v$ is such a non-zero element which is moreover homogeneous of degree $n$, so in $T(V)$, 
$$\Delta(v)\in v\ts 1+1\ts v+K\ts T(V)+T(V)\ts K.$$
As $K\subset \mathfrak{I}(V)$, 
$$\Delta(v)\in v\ts 1+1\ts v+\mathfrak{I}(V)\ts T(V)+T(V)\ts\mathfrak{I}(V).$$
But in $T(V)/\mathfrak{I}(V)$, from the definition of Nichols algebra, there is no such element, which forces $v\in\mathfrak{I}(V)$ and then $S_nv=0$.
\par
We need to show that in fact $v\in K$. From Corollary \ref{NS}, 
$$P_n(v)=\mathcal{N}\ast S(v)\in nv+K,$$
then $v-k\in Im(P_n)$ for some $k\in K$. As $S_nv=0$ and $K\subset\ker(S_n)$, $v-k\in\ker(S_n)\cap Im(P_n)\subset K$; this implies $v\in K$.
\end{proof}

\indent
This proposition shows the importance of these subspaces $\ker(S_n)\cap Im(P_n)$ in the study of the defining ideal.

\subsection{Main theorem}
The main result of this paper is:
\begin{theorem}\label{maintheorem}
Elements of level $n$ are primitive.
\end{theorem}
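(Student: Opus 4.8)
The plan is to establish that an element $v \in V^{\ts n}$ of level $n$ satisfies $\Delta(v) = v \ts 1 + 1 \ts v$, i.e. that all its proper truncated components in $\bigoplus_{0<k<n} V^{\ts k}\ts V^{\ts n-k}$ vanish. The natural tool is the differential algebra of $\mathfrak{N}(V)$ constructed in Section 7 and the skew-derivations it provides: a homogeneous element of degree $n$ in the kernel of every such differential operator (equivalently, in the kernel of the partial "braided coderivations" obtained from the coproduct by projecting onto $V \ts V^{\ts n-1}$, $V^{\ts 2}\ts V^{\ts n-2}$, etc.) is necessarily primitive, since these operators together detect the non-primitive part of the coproduct. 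So I would first reduce primitivity of $v$ to the statement that $v$ is killed by all lower-degree components of $\Delta$, and then translate that into a statement about the braid-group action via the description $\Delta = \sum_k (\text{shuffle projectors})$ familiar from the quantum shuffle picture.

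The core of the argument should exploit the level $n$ hypothesis together with Proposition \ref{decomposition:sn} and Corollary \ref{1-thetan}. First I would show, using Lemma \ref{2->1}, that a level $n$ element lies in $\ker T_n$ and more: each truncated coproduct component of $v$ lives in an image of some $\iota_s(S_s)$-type operator, and by the level $n$ assumption the corresponding $\iota_s(\theta_s)$ cannot fix it unless it is zero. Concretely, the degree-$(k, n-k)$ component of $\Delta(v)$ is governed by the shuffle element $\sum_{\sigma \in \mathfrak{S}_{k,n-k}} T_\sigma$, and one uses the factorization of $S_n$ through $T_2 T_3 \cdots T_n$ to see that if such a component were nonzero, it would produce a nonzero solution of $\iota_s(\theta_s)x = x$ inside $\mc[X_v]$ for some positional embedding with $2 \le s \le n-1$, contradicting Definition \ref{Def:leveln}. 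The differential operators of Section 7 are what make this rigorous: acting by a degree-one skew-derivation $\partial_i$ sends a level $n$ element to something which, by the compatibility of $\partial_i$ with the braiding, must itself satisfy the relevant lower-rank $\theta_s$-fixed-point equation, hence vanishes.

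I would then assemble these vanishings: if $\partial_i(v) = 0$ for all the skew-derivations $\partial_i$ spanning the degree-one differentials, then $v$ is primitive, because in a Nichols-type setting the intersection of the kernels of all first-order differentials in positive degree is exactly the space of primitives — this is precisely the role the differential algebra is built to play, generalizing Heckenberger's criterion. So the final step is a clean citation of the Taylor Lemma / decomposition theorem of Section 8 (in the diagonal case) and its general-braiding analogue established via the differential algebra of Section 9, applied to conclude $v \in \mathfrak{N}(V)_n$ is primitive, hence (by the defining property of $\mathfrak{N}(V)$ together with Proposition \ref{schauenburg}) actually lies in $\ker(S_n)$, consistent with $v$ being of level $n$.

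The main obstacle I expect is the passage from "all truncated coproducts of $v$ are fixed by the appropriate lower $\theta_s$" to "they vanish": the level $n$ hypothesis only forbids nonzero solutions of $\iota_s(\theta_s)x = x$ in $\mc[X_v]$, so I must verify that each truncated coproduct component genuinely lies in $\mc[X_v]$ (or in a $\mc[\fb_n]$-module to which the hypothesis applies) — this requires care because the coproduct components live in $V^{\ts k} \ts V^{\ts n-k}$, not a priori in $V^{\ts n}$, so one must reinterpret them via the canonical identifications coming from the braided tensor structure and check $\mc[\fb_n]$-equivariance. Threading this identification correctly, and ensuring the differential operators of Section 7 intertwine the braid actions in the way needed, is where the real work lies; the rest is bookkeeping with the decompositions from Section 4.
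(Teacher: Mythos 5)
Your overall strategy --- kill the mixed components of $\Delta(v)$ using the differential operators of Section 7 together with the factorization $S_n=T_2\cdots T_n$ --- is indeed the paper's strategy, and you correctly identify that the $(n-1,1)$-component of the coproduct is governed by $T_n$, so that the level $n$ hypothesis forces $\p_i^R(v)=0$ for all $i$ (this is Proposition \ref{prop0}). But the step you use to close the argument is false: the intersection of the kernels of all degree-one differentials in positive degree is \emph{not} the space of primitives. In degree $n$ that common kernel is exactly $\ker(T_n)$, and identifying $\ker(T_n)$ with the primitives is precisely what has to be proved (it fails without the level hypothesis); in the quotient $\fn(V)$ the common kernel of the $\p_i$ in positive degree is zero (Section \ref{nondeg}), not the primitives. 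Likewise the Taylor Lemma and the decomposition theorem of Section 8 play no role here: they are downstream applications in the diagonal case, not ingredients of the proof.

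What actually closes the argument is a two-step mechanism you do not supply. First, from $\p_i^R(v)=0$ for all $i$ one deduces $\p_a^R(v)=0$ for every non-constant $a$ (Corollary \ref{cor2}), and then, because the right radical of the pairing $\phi:T(V^*)\ts T(V)\ra\mc$ is exactly $\mathfrak{I}(V)$, one obtains $\Delta(v)-v\ts 1\in T(V)\ts\mathfrak{I}(V)$ (Proposition \ref{prop5}); hence in each component $\Delta_{i,n-i}(v)=\sum x'\ts x''$ the right factor satisfies $S_{n-i}x''=0$. Second --- and this resolves exactly the ``main obstacle'' you flag at the end --- the component $\Delta_{i,n-i}(v)$ is literally the element $\bigl(\sum_{\sigma\in\mathfrak{S}_{i,n-i}}T_\sigma\bigr)(v)$ of $V^{\ts n}$, so it does lie in $\mc[X_v]$; applying the positional embedding of $S_{n-i}$ on the last $n-i$ strands then gives zero, so a non-zero component would produce a non-zero solution of $\iota(S_{n-i})y=0$ in $\mc[X_v]$, contradicting Lemma \ref{2->1}. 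That lemma is the bridge converting the level-$n$ hypothesis (no $\iota_s(\theta_s)$-fixed points) into the non-existence of solutions of $\iota_s(S_s)y=0$; your sketch instead tries to contradict $\iota_s(\theta_s)x=x$ directly, and without Lemma \ref{2->1} and the identification of $\Delta_{i,n-i}(v)$ as a shuffle element acting on $v$ inside $\mc[X_v]$, the argument does not go through.
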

\indent
From this theorem, level $n$ solutions of $S_nx=0$ are primitive elements of degree $n$, so they are in $\ker(S_n)\cap Im(P_n)$. Moreover, this introduces a method to find primitive elements in $T(V)$.
\par
The proof of this theorem will be given in the end of this paper, after introducing the differential algebra of a Nichols algebra.

\section{Applications}
In this section, we give some applications of the machinery constructed above.
\par
Though the discussion in the last section is somehow elementary, it may give remarkable results and good points of view once being applied to some concrete examples.

\subsection{A general application for the diagonal type}\label{diagonaltype}
Let $H$ be the group algebra of an abelian group $G$, $V\in{}_H^H\mathcal{YD}$ be of diagonal type, $T(V)$ and $\mathfrak{N}(V)$ be the braided tensor Hopf algebra and Nichols algebra, respectively.
\par
Suppose that $dimV=m$, with basis $v_1,\cdots,v_m$ such that 
$$\sigma(v_i\ts v_j)=q_{ij}v_j\ts v_i.$$
From the definition of the braiding, the action of $\mc[\fb_n]$ on $V^{\ts n}$ has the following decomposition
$$V^{\ts n}=\bigoplus_{\underline{i}\in I}\mc[\fb_n].v_1^{i_1}\cdots v_m^{i_m},$$
where the sum runs over $I=\{\underline{i}=(i_1,\cdots,i_m)|\ i_1+\cdots+i_m=n\}$.
\par
We fix some $\underline{i}=(i_1,\cdots,i_m)\in\mathbb{N}^m$ such that $\underline{i}\in I$ and a monomial $v_{\underline{i}}=v_1^{i_1}\cdots v_m^{i_m}$, then in 
$$\mc[X_{\underline{i}}]=\mc[\fb_n].v_{\underline{i}},$$
if $\theta_nx=x$ has a solution, we must obtain $\theta_n v_{\underline{i}}=v_{\underline{i}}$.
\par
Indeed, when projected canonically to $\mathfrak{S}_n$, the element $\theta_n\in\fb_n$ corresponds to $1$, so if $\theta_nx=x$, $\theta_n$ will stablize all  components of $x$. From the decomposition above, for any component $x_0$ of $x$, there exists a nonzero constant $c$ and an element $\sigma\in\fb_n$ such that $cx_0=\s(v_{\underline{i}})$, thus $v_{\underline{i}}=\s^{-1}(cx_0)$. Thus all level $n$ elements are contained in the sum of some $\mc[X_{\underline{i}}]$ for some $v_{\underline{i}}$ satisfying $\theta_nv_{\underline{i}}=v_{\underline{i}}$.
\par
To exclude those elements which have not level $n$ but are stable under the action of $\theta_n$, some notations are needed.
\par
We fix some $\underline{i}$ and $v_{\underline{i}}$. It is more convenient to write $v_{\underline{i}}=e_1\cdots e_n$, where $e_i$ are some $v_j$'s. Then let $T_{\underline{i}}=(t_{ij})$ denote a matrix in $M_n(\mc)$ with $t_{ii}=1$ and for $i\neq j$, $t_{ij}$ are defined by 
$$\sigma(e_i\ts e_j)=t_{ij}e_j\ts e_i.$$
For some $2\leq s\leq n$, $1\leq k_1<\cdots<k_s\leq n$, $\underline{k}=(k_1,\cdots,k_s)$, we define:
$$\Pi_s^{\underline{k}}=\prod_{i=1}^s\prod_{j=1}^s t_{k_i,k_j}.$$
\indent
The following proposition is an easy consequence of the definition.
\begin{proposition} 
With the notations above, we have:
\begin{enumerate}
\item
$\theta_n v_{\underline{i}}=v_{\underline{i}}$ if and only if $\Pi_n^{\underline{k}}=1$.
\item
If for any $2\leq s\leq n-1$ and any $\underline{k}$, $\Pi_s^{\underline{k}}\neq 1$, then $v_{\underline{i}}$ satisfies the assumption in Definition \ref{Def:leveln}. Moreover, all elements in $\ker(S_n)\cap\mc[X_{v_{\underline{i}}}]$ are of level $n$.
\end{enumerate}
\end{proposition}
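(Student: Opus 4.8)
The plan is to reduce both parts to one computation: the scalar by which the full twist $(\s_{n-1}\s_{n-2}\cdots\s_1)^n$, and its positional analogues, act on a monomial of diagonal type.

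For (1), I would use the presentation $\theta_n=\Delta_n^2=(\s_{n-1}\s_{n-2}\cdots\s_1)^n$ from Lemma~\ref{moving} and track the action of $c:=\s_{n-1}\s_{n-2}\cdots\s_1$ on $v_{\underline{i}}=e_1\ts\cdots\ts e_n$. A direct check shows that $c$ slides the first tensor factor past the others into the last slot, collecting one braiding coefficient per crossing: $c(e_1\ts\cdots\ts e_n)=\bigl(\prod_{j\neq 1}t_{1,j}\bigr)\,e_2\ts\cdots\ts e_n\ts e_1$. Iterating $c$ a total of $n$ times brings the monomial back to $v_{\underline{i}}$ and multiplies it by $\prod_{k=1}^{n}\prod_{j\neq k}t_{k,j}=\prod_{i\neq j}t_{i,j}$; since $t_{i,i}=1$ this is exactly $\Pi_n^{\underline{k}}$ for the unique admissible $\underline{k}=(1,\dots,n)$. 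Hence $\theta_n v_{\underline{i}}=v_{\underline{i}}$ if and only if $\Pi_n^{\underline{k}}=1$.

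For (2), the structural point is that $\mc[X_{v_{\underline{i}}}]=\mc[\fb_n].v_{\underline{i}}$ is spanned by the finitely many rearrangements of $v_{\underline{i}}$, i.e.\ the distinct monomials of the same multidegree (adjacent transpositions generate $\mathfrak{S}_n$ and each $\s_i^{\pm1}$ realizes one up to a nonzero scalar), and these monomials are linearly independent in $V^{\ts n}$. For a positional embedding $\iota\colon\fb_s\hookrightarrow\fb_n$ with parameter $r$, Lemma~\ref{moving} applied in $\fb_s$ gives $\iota(\theta_s)=(\s_{r+s-1}\cdots\s_{r+1})^s$, and the computation of (1), now confined to the $s$ consecutive strands $r+1,\dots,r+s$, shows that $\iota(\theta_s)$ fixes each monomial $M$ up to the scalar $\Pi_s^{\underline{k}}$, where $\underline{k}\subset\{1,\dots,n\}$ records which of the factors $e_1,\dots,e_n$ occupy slots $r+1,\dots,r+s$ of $M$. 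Thus $\iota(\theta_s)$ is diagonal in the monomial basis, so $\iota(\theta_s)x=x$ has a nonzero solution in $\mc[X_{v_{\underline{i}}}]$ exactly when $\Pi_s^{\underline{k}}=1$ for one of the label-sets $\underline{k}$ that occur — and every $s$-subset of $\{1,\dots,n\}$ does occur (take $r=0$ and the rearrangement placing $e_{k_1},\dots,e_{k_s}$ in the first $s$ slots). Therefore the hypothesis $\Pi_s^{\underline{k}}\neq1$ for all $2\le s\le n-1$ and all $\underline{k}$ rules out such invariant vectors for every positional embedding, which is precisely the condition in Definition~\ref{Def:leveln}. Finally, if $x\in\ker(S_n)\cap\mc[X_{v_{\underline{i}}}]$ is nonzero then $S_n x=0$ and $\mc[X_x]=\mc[\fb_n].x\subset\mc[X_{v_{\underline{i}}}]$, so the absence of $\iota(\theta_s)$-invariant vectors passes from $\mc[X_{v_{\underline{i}}}]$ to $\mc[X_x]$, and $x$ is of level $n$.

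There is no serious obstacle — the statement is, as announced, an easy consequence of the definitions. The one delicate point is the claim that $\iota(\theta_s)$ acts diagonally on the span of monomials with the stated eigenvalues; this is immediate once one notes that in the diagonal case each $\s_i^{\pm1}$ acts on a monomial as a transposition of two adjacent tensor slots composed with multiplication by a nonzero scalar, so $(\s_{r+s-1}\cdots\s_{r+1})^s$ cyclically permutes the chosen window of $s$ slots exactly $s$ times and returns it to its original arrangement. The careful accounting of the braiding coefficients is essentially the whole proof.
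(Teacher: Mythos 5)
Your proposal is correct, and it is exactly the computation the paper has in mind: the paper offers no written proof, dismissing the statement as "an easy consequence of the definition," and your argument — that $c=\s_{n-1}\cdots\s_1$ cyclically shifts a monomial while collecting braiding coefficients, so that $\theta_n$ and each $\iota(\theta_s)$ act diagonally on the monomial basis of $\mc[X_{v_{\underline{i}}}]$ with eigenvalues $\Pi_s^{\underline{k}}$ — is the intended verification. The final reduction from $\mc[X_{v_{\underline{i}}}]$ to $\mc[X_x]$ for $x\in\ker(S_n)\cap\mc[X_{v_{\underline{i}}}]$ is also handled correctly.
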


\begin{remark}\label{remarkdiag}
Under the assumptions (2) in the proposition above,
\begin{enumerate}
\item From Theorem \ref{maintheorem}, all elements in $\ker(S_n)\cap\mc[X_{v_{\underline{i}}}]$ are primitive.
\item From Remark \ref{remark}, all elements in $\ker(S_n)\cap\mc[X_{v_{\underline{i}}}]$ can be constructed from $\mc[X_{v_{\underline{i}}}]$ by the method given in the end of Section \ref{solutions}. So such a family of primitive elements can be easily and directly computed.
\end{enumerate}
\end{remark}

\begin{remark}\label{level2}
If $v\in V\ts V$ is an element in $\ker(S_2)\cap Im(P_2)$, then it must be of level $2$ and so primitive. Moreover, in the diagonal case, level $2$ elements in $\ker(S_2)\cap Im(P_2)$ can be obtained from monomials stablized by $\theta_2$ by applying $P_2$.
\end{remark}

\subsection{Exterior algebras}
In this subsection, as a warm up, we apply results of the previous section to the construction of exterior algebras.
\par 
The main ingredient is the Hopf algebra $H=\mc[G]$, where $G=\mathbb{Z}/2\mathbb{Z}=\{1,\ve\}$. Let $V$ be a finite dimensional vector space with basis $v_1,\cdots,v_m$. 
\begin{enumerate}
\item The action of $H$ on $V$ is given by: for $v\in V$, $\ve.v=-v$; 
\item The coaction is given by: $\delta(v)=\ve\ts v$, where $\delta:V\ra H\ts V$. 
\end{enumerate}
This makes $V$ an $H$-Yetter-Drinfel'd module.
\par
We form the braided Hopf algebra $T(V)$ and want to calculate relations appearing in the ideal $\mathfrak{I}(V)$.
\par
At first, we consider relations in $V^{\ts n}$ of level $n$. In fact, for $n\geq 3$, there are no such relations because if $v=v_{i_1}\cdots v_{i_n}\in V^{\ts n}$ is a pure tensor such that $\theta_nv=v$, from the definition of the braiding, there must exist 
some $1\leq s<t\leq n$ such that $\s^2(v_{i_s}\ts v_{i_t})=v_{i_s}\ts v_{i_t}$, which contradicts the definition of level $n$ relations.
\par
So it suffices to consider relations of level $2$ in $V^{\ts 2}$. We start from considering all solutions of $\theta_2x=x$ in $V^{\ts 2}$. These solutions are:
$v_iv_j$, for $1\leq i,j\leq n$.
\par
As in the procedure of constructing solutions of $S_nx=0$ from $\theta_nx=x$ given in the last section, the action of $P_2$ on these elements gives: 
$$P_2(v_iv_j)=v_iv_j+v_jv_i,$$
so $v_iv_j+v_jv_i\in\ker(S_2)\cap Im(P_2)$. Moreover, from Remark \ref{level2} in the last subsection, we obtain
$$\ker(S_2)\cap Im(P_2)=span\{v_iv_j+v_jv_i|\ 1\leq i,j\leq n\}.$$

\subsection{Quantized enveloping algebras}\label{qea}
In this subsection, we will discover the quantized Serre relations in the definition of the quantized enveloping algebra $\uqg$ associated to a symmetrizable Kac-Moody Lie algebra $\g$ by assuming almost no knowledge about the existence of such relations.
\par
Let $q$ be a nonzero complex number such that for any $N\geq 1$, $q^N\neq 1$. Let $\g$ be a symmetrizable Kac-Moody Lie algebra of rank $n$, $C=(C_{ij})_{n\times n}$ be its generalized Cartan matrix and $A=DC$ be the symmetrization of the Cartan matrix by some diagonal matrix $D=(d_1,\cdots,d_n)$ with $d_i$ positive integers which are relatively prime. We denote $A=(a_{ij})_{n\times n}$.
\par
At first, we briefly recall the construction of the strict positive part of $\uqg$ in the framework of Nichols algebras. This construction is due to M. Rosso and can be found in \cite{Ros98} with a slightly different language.
\par
Let $H=\mc[G]$ be the group algebra where $G$ is the abelian group $\mathbb{Z}^n$. Let $K_1,\cdots,K_n$ denote a basis of $\mathbb{Z}^n$. Then $H$ is a commutative and cocommutative Hopf algebra.
\par
Let $V$ be a $\mc$-vector space of dimension $n$ with basis $E_1,\cdots,E_n$. We define an $H$-Yetter-Drinfel'd module structure on $V$ by: 
\begin{enumerate}
\item The action of $K_i$ on $E_j$ is given by: $K_i.E_j=q^{a_{ij}}E_j$;
\item The coaction of $E_i$ is given by: $\delta(E_i)=K_i\ts E_i$, where $\delta:V\ra H\ts V$ is the structure map of left $H$-comodule structure on $V$.
\end{enumerate}
\indent
Starting with this $V\in{}_H^H\mathcal{YD}$, the braided tensor algebra $T(V)$ and the corresponding Nichols algebra $\mathfrak{N}(V)$ can be constructed. The defining ideal is denoted by $\mathfrak{I}(V)$.
\par
Assume that we know nothing about this ideal $\mathfrak{I}(V)$ (because from the general theory of quantized enveloping algebras, we know that $\mathfrak{I}(V)$ is generated by quantized Serre relations). Here, our point of view is much more pedestrian: if we do not know them, how to find?
\par
Results in the previous section will offer us a method.
\par
At first, we want to concentrate on the case $\mathcal{U}_q(\mathfrak{sl}_3)$, the simplest one which has such quantized Serre relations. In this case, $H=\mathbb{Z}^2$ with basis $K_1,K_2$, $V$ is of dimension $2$ with basis $E_1,E_2$.\\ 
\indent
We would like to compute the level $3$ relations in $\mathfrak{I}(V)$.
\par
At first, we write down all monomials of degree $3$ which are stabilized by the action of $\theta_3$ but not for all $\theta_2$ with possible embeddings. They are:
$$E_1^2E_2,\ \ E_1E_2E_1,\ \ E_2E_1^2,\ \ E_1E_2^2,\ \ E_2E_1E_2,\ \ E_2^2E_1.$$
After the action of $1+\s_2^2\s_1$, we obtain:
$$2E_1^2E_2,\ \ E_1E_2E_1+q^3E_2E_1^2,\ \ E_2E_1^2+q^{-3}E_1E_2E_1,$$
$$E_1E_2^2+q^{-3}E_2E_1E_2,\ \ E_2E_1E_2+q^3E_1E_2^2,\ \ 2E_2^2E_1.$$
In this case, $X=1-\s_2^2$, so the action of $X^{-1}$ on these elements will give:
$$x_1=\frac{2}{1-q^{-2}}E_1^2E_2,\ \ x_2=\frac{1}{1-q^{-2}}E_1E_2E_1+\frac{q^3}{1-q^4}E_2E_1^2,$$
$$x_3=\frac{1}{1-q^4}E_2E_1^2+\frac{q^{-3}}{1-q^{-2}}E_1E_2E_1,\ \ x_4=\frac{1}{1-q^4}E_1E_2^2+\frac{q^{-3}}{1-q^{-2}}E_2E_1E_2,$$
$$x_5=\frac{1}{1-q^{-2}}E_2E_1E_2+\frac{q^3}{1-q^4}E_1E_2^2,\ \ x_6=\frac{1}{1-q^{-2}}E_2^2E_1.$$
It is easy to compute the action of $P_3$ on all possible monomials:
$$P_3(E_1^2E_2)=E_1^2E_2-(q+q^{-1})E_1E_2E_1+E_2E_1^2,$$
$$P_3(E_1E_2E_1)=2E_1E_2E_1-q^{-1}E_1^2E_2-qE_2E_1^2,$$
$$P_3(E_2E_1^2)=(1-q^2)E_2E_1^2-(q^{-2}-1)E_1^2E_2,$$
$$P_3(E_1E_2^2)=(1-q^2)E_1E_2^2-(q^{-2}-1)E_2^2E_1,$$
$$P_3(E_2E_1E_2)=2E_2E_1E_2-qE_1E_2^2-q^{-1}E_2^2E_1,$$
$$P_3(E_2^2E_1)=E_2^2E_1-(q+q^{-1})E_2E_1E_2+E_1E_2^2.$$
And then
$$P_3(x_1)=\frac{2}{1-q^{-2}}(E_1^2E_2-(q+q^{-1})E_1E_2E_1+E_2E_1^2),$$
$$P_3(x_2)=-\frac{2q^{-1}}{1-q^{-4}}(E_1^2E_2-(q+q^{-1})E_1E_2E_1+E_2E_1^2),$$
$$P_3(x_3)=\frac{2}{1-q^4}(E_1^2E_2-(q+q^{-1})E_1E_2E_1+E_2E_1^2),$$
$$P_3(x_4)=\frac{2}{1-q^4}(E_2^2E_1-(q+q^{-1})E_2E_1E_2+E_1E_2^2),$$
$$P_3(x_5)=-\frac{2q^{-1}}{1-q^{-4}}(E_2^2E_1-(q+q^{-1})E_2E_1E_2+E_1E_2^2),$$
$$P_3(x_6)=\frac{2}{1-q^{-2}}(E_2^2E_1-(q+q^{-1})E_2E_1E_2+E_1E_2^2).$$
\indent
So starting with solutions of $\theta_3x=x$ with level $3$, the solutions in $Im(P_3)$ with level $3$ we obtained for the equation $S_3x=0$ are exactly the quantized Serre relations of degree $3$.
\par
Moreover, we show that there are no other relations of level $3$. If $w\in\ker(S_3)$ is such an element, it will be stable under the action of $\theta_3$, so it is a linear combination of monomials above, then it must be a linear combination of degree $3$ Serre relations.
\par
Finally, we turn to the level $n$ elements for an arbitrary integer $n\geq 2$. As explained in Section \ref{diagonaltype}, it suffices to consider a monomial of form $E_1^sE_2^t$ for some positive integers $s$ and $t$.
\par
The action of $\theta_{s+t}$ on this monomial gives:
$$\theta_{s+t}(E_1^sE_2^t)=q^{s^2-s+t^2-t-st}E_1^sE_2^t.$$
So this monomial is stablized by $\theta_{s+t}$ if and only if
$$s^2-s+t^2-t-st=\frac{1}{2}\left((s-t)^2+(s-1)^2+(t-1)^2-2\right)=0.$$
The only possible positive integer solutions $(s,t)$ of the equation $(s-t)^2+(s-1)^2+(t-1)^2=2$ are $(2,2)$, $(2,1)$ and $(1,2)$. But $(s,t)=(2,2)$ is not of level $4$ because we can always find a subword which is fixed by $\theta_3$.
\par
As a conclusion, the only possible level in this case is $3$ and all possible relations coming from level $3$ elements are quantized Serre relations as shown above.

\subsection{Primitivity of Serre relations}
As an application of the main theorem, we deduce a short proof for the primitivity of Serre relations with little computation. A direct proof can be found in the appendix of \cite{AS00}.
\par
Let $A=(a_{ij})_{n\times n}=DC$ be a symmetrized Cartan matrix and $V$ be a $\mathbb{Z}^n$-Yetter-Drinfeld module of diagonal type with dimension $n$. Notations in the previous subsection are adopted. 
Moreover, suppose that the braiding matrix $(q_{ij})$ satisfies:
\begin{equation}\label{qij}
q_{ij}q_{ji}=q_{ii}^{c_{ij}},\ \ 1\leq i,j\leq n,
\end{equation}
and these $-c_{ij}$ are the smallest integers such that the equations (\ref{qij}) hold.

\begin{proposition}\label{primitives}
For any $1\leq i,j\leq n$, $i\neq j$, we denote $N=1-c_{ij}$. Then $P_{N+1}(v_i^{N}v_j)$ is a primitive element, where $P_{N+1}$ is the Dynkin operator.
\end{proposition}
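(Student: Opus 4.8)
The plan is to deduce the statement from Theorem \ref{maintheorem}: it is enough to show that $P_{N+1}(v_i^{N}v_j)$ is an element of level $N+1$ in the sense of Definition \ref{Def:leveln}. Writing $w:=v_i^{N}v_j\in V^{\ts(N+1)}$, this means verifying two things: (a) $P_{N+1}(w)$ is annihilated by $S_{N+1}$, and (b) the module $\mc[X_{w}]=\mc[\fb_{N+1}].w$ carries no nonzero fixed vector for any $\iota(\theta_s)$ with $2\le s\le N$ and $\iota$ a positional embedding. Both reduce to one observation: since all letters of $w$ in positions $1,\dots,N$ equal $v_i$ and the last letter is $v_j$, every braid element of the form $\s_N^2\s_{N-1}\cdots\s_r$ (for $1\le r\le N$) preserves the underlying word and multiplies $w$ by the scalar $q_{ii}^{\,N-r}\,q_{ij}q_{ji}=q_{ii}^{\,N-r}\,q_{ii}^{c_{ij}}=q_{ii}^{\,1-r}$, using (\ref{qij}) and $N=1-c_{ij}$. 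In particular $\s_N^2\s_{N-1}\cdots\s_1$ \emph{fixes} $w$.

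For (a) I would use the factorization $T_{N+1}P_{N+1}=T_{N+1}'$ of Proposition \ref{decomposition:tn} together with the identity $T_{N+1}'=(1-\s_N^2\s_{N-1}\cdots\s_1)X$ from Subsection \ref{solutions}. By the scalar computation above, $X$ — a product of factors $1-\s_N^2\s_{N-1}\cdots\s_r$ for $r=2,\dots,N$ — sends $w$ to $\beta w$, where $\beta=\prod_{r=2}^{N}(1-q_{ii}^{\,1-r})$ is nonzero because $q$ is not a root of unity. Hence
$$T_{N+1}'(w)=(1-\s_N^2\s_{N-1}\cdots\s_1)\bigl(X(w)\bigr)=\beta\,(1-\s_N^2\s_{N-1}\cdots\s_1)(w)=0,$$
so $T_{N+1}\bigl(P_{N+1}(w)\bigr)=0$, and since $S_{N+1}=T_2T_3\cdots T_{N+1}$ (Proposition \ref{decomposition:sn}) we conclude $P_{N+1}(w)\in\ker(S_{N+1})$. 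Equivalently, since $\theta_{N+1}w=w$ (see below), one may run the construction at the end of Subsection \ref{solutions} starting from $w$; because $\s_N^2\s_{N-1}\cdots\s_1$ fixes $w$, it produces exactly a nonzero scalar multiple of $P_{N+1}(w)$ and certifies it to be a solution of $S_{N+1}x=0$ lying in $Im(P_{N+1})$.

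For (b) I would compute the products $\Pi_s^{\underline{k}}$ of Subsection \ref{diagonaltype} for the word $w$. The full product is $\Pi_{N+1}=q_{ii}^{\,N(N-1)}(q_{ij}q_{ji})^{N}=q_{ii}^{\,N(N-1+c_{ij})}=q_{ii}^{0}=1$, so by the Proposition in Subsection \ref{diagonaltype} we get $\theta_{N+1}w=w$; and any proper sub-word of length $s$, $2\le s\le N$, is either $v_i^{s}$, with $\Pi_s=q_{ii}^{\,s(s-1)}$, or $v_i^{s-1}v_j$, with $\Pi_s=q_{ii}^{\,(s-1)(s-1-N)}$ (again via (\ref{qij}) and $N=1-c_{ij}$). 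In all these cases the exponent is a nonzero integer, hence $\Pi_s\ne 1$ because $q$ is not a root of unity. Thus hypothesis (2) of the Proposition in Subsection \ref{diagonaltype} holds for $w$, so $w$ meets the standing assumption of Definition \ref{Def:leveln} and, moreover, every element of $\ker(S_{N+1})\cap\mc[X_{w}]$ is of level $N+1$.

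Combining: $P_{N+1}(w)$ lies in $Im(P_{N+1})\cap\mc[X_{w}]$ by construction and in $\ker(S_{N+1})$ by (a), hence it is of level $N+1$ by (b); Theorem \ref{maintheorem} then gives that it is primitive. Its non-vanishing, if wanted, follows from the coefficient of $v_i^{N}v_j$ in $P_{N+1}(v_i^{N}v_j)$ or from the explicit $\mathcal{U}_q(\mathfrak{sl}_3)$ computation of Subsection \ref{qea}; when $N=1$ the level condition is vacuous and the argument degenerates harmlessly. I expect the only delicate points to be bookkeeping: ensuring the non-root-of-unity hypothesis keeps every exponent occurring in $\beta$ and in the $\Pi_s^{\underline{k}}$ away from $0$, and tracking the scalar action of $\s_N^2\s_{N-1}\cdots\s_r$ on $w$ for all $r$ — once that is in hand, both (a) and (b) are essentially immediate.
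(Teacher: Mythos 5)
Your proof is correct and follows essentially the same route as the paper: establish $\theta_{N+1}(v_i^Nv_j)=v_i^Nv_j$ from the scalar action of $\s_N^2\s_{N-1}\cdots\s_1$ via Proposition \ref{prop1}, check the level-$(N+1)$ condition, and use the factorization $T_{N+1}P_{N+1}=T_{N+1}'=(1-\s_N^2\cdots\s_1)X$ with $X$ acting invertibly on $v_i^Nv_j$ to conclude via Theorem \ref{maintheorem}. The only difference is that you spell out the subword computations $\Pi_s^{\underline{k}}\neq 1$ and the non-vanishing, which the paper leaves as "obvious".
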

\begin{proof}
At first, it is easy to show that
$$(1-\s_{N}^2\s_{N-1}\cdots\s_1)(v_i^Nv_j)=(1-q_{ii}^{-c_{ij}}q_{ij}q_{ji})v_i^Nv_j.$$
From the hypothesis (\ref{qij}) above, the right hand side is $0$, so from Proposition \ref{prop1}, $\theta_{N+1}(v_i^Nv_j)=v_i^Nv_j$.
\par
Moreover, it is obvious that for any $1<s<N+1$ and any positional embedding $\iota:\mathfrak{B}_s\hookrightarrow\mathfrak{B}_{N+1}$, $\iota(\theta_{s})(v_i^Nv_j)\neq v_i^Nv_j$. As a consequence, in the algorithm after Remark \ref{remark}, $X^{-1}$ is well defined and from the definition of $X$, $X^{-1}(v_i^Nv_j)=\lambda v_i^Nv_j$ for some non-zero constant $\lambda$. Then $P_{N+1}(v_i^Nv_j)$ is a nonzero solution of $S_{N+1}x=0$ of level $n$, so it is primitive by Theorem \ref{maintheorem}.
\end{proof}

\subsection{Quantized enveloping algebras revisited}
We keep notations in the beginning of Section \ref{qea}.
\par
As $A=DC$ is a symmetrized Cartan matrix, for any $1\leq i,j\leq n$, we have $d_ic_{ij}=d_jc_{ji}$. Then from the definition of $q_{ij}$, the following lemma is clear.
\begin{lemma}
Let $A=DC$ be a symmetrized Cartan matrix. Then for any $1\leq i,j\leq n$, 
$$q_{ij}q_{ji}=q_{ii}^{c_{ij}}.$$
\end{lemma}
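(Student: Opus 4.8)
The plan is to unwind the Yetter-Drinfel'd structure on $V$ fixed at the beginning of Section~\ref{qea} and read off the braiding matrix $(q_{ij})$ explicitly in terms of $q$ and the entries of $A$. First I would observe that, with $g_i=K_i$ the group-like element attached to $E_i$ via the coaction and $\chi_j$ the character describing the action (so $K_i.E_j=q^{a_{ij}}E_j$, hence $\chi_j(K_i)=q^{a_{ij}}$), the braiding matrix is $q_{ij}=\chi_j(g_i)=\chi_j(K_i)=q^{a_{ij}}$ for all $1\leq i,j\leq n$.

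Next I would bring in the two structural facts about the symmetrized Cartan matrix $A=DC$: the relation $a_{ij}=d_ic_{ij}$ built into the factorization $A=DC$, and the symmetry $a_{ij}=a_{ji}$, which is exactly the identity $d_ic_{ij}=d_jc_{ji}$ recalled just before the lemma. Specializing $a_{ij}=d_ic_{ij}$ to $i=j$ and using $c_{ii}=2$ (valid for any generalized Cartan matrix) yields $a_{ii}=2d_i$.

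With these in hand the lemma reduces to a single computation:
$$q_{ij}q_{ji}=q^{a_{ij}}q^{a_{ji}}=q^{2a_{ij}}=q^{2d_ic_{ij}}=\left(q^{2d_i}\right)^{c_{ij}}=\left(q^{a_{ii}}\right)^{c_{ij}}=q_{ii}^{c_{ij}},$$
which is the asserted identity.

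There is essentially no obstacle here; the proof is a direct verification. The only point that demands a moment's care is the index bookkeeping in identifying $q_{ij}$ with $q^{a_{ij}}$ rather than $q^{a_{ji}}$, but since $A$ is symmetric this distinction does not affect the outcome. I would also note in passing that this lemma is precisely what licenses applying Proposition~\ref{primitives} to the Nichols algebra attached to $\uqg$: it shows the braiding matrix satisfies hypothesis~(\ref{qij}), so that $P_{N+1}(v_i^{N}v_j)$ with $N=1-c_{ij}$ is a primitive element, thereby recovering the quantized Serre relations from the main theorem.
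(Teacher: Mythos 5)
Your computation is correct and is exactly the verification the paper leaves implicit when it declares the lemma ``clear'' after recalling $d_ic_{ij}=d_jc_{ji}$: with $q_{ij}=q^{a_{ij}}$ one has $q_{ij}q_{ji}=q^{a_{ij}+a_{ji}}=q^{2d_ic_{ij}}=(q^{a_{ii}})^{c_{ij}}$. No gap; this is the paper's intended argument, merely written out.
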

Combined with Proposition \ref{primitives}, this lemma gives:
\begin{corollary}
Let $\g$ be a symmetrizable Kac-Moody algebra. Then degree $n$ quantized Serre relations in $\mathcal{U}_q(\g)$ are of level $n$. Moreover, the union of level $n$ elements for $n\geq 2$ generates $\mathfrak{I}(V)$ as an ideal.
\end{corollary}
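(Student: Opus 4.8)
The plan is to deduce both halves of the statement from Proposition \ref{primitives} together with the classical identification of the positive part of $\uqg$. For the first half, fix $1\le i\ne j\le n$, set $N=1-c_{ij}$ and $m:=N+1=2-c_{ij}$, and write $w=E_i^N E_j\in V^{\ts m}$. By Lemma \ref{decomposition:pn} the Dynkin operator $P_m$ is an iterated braided commutator, so up to a nonzero scalar $P_m(w)$ is exactly the degree $m$ quantized Serre relation between $E_i$ and $E_j$ (one checks this by the same kind of computation as for $\uq(\ssl_3)$ in Section \ref{qea}). The Lemma preceding the corollary gives $q_{ij}q_{ji}=q_{ii}^{c_{ij}}$, and since $q$ is not a root of unity the integers $-c_{ij}$ are the smallest ones realising this identity; hence hypothesis (\ref{qij}) of Proposition \ref{primitives} is met. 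Its proof then shows that $w$ is fixed by $\theta_m$ but by no $\iota(\theta_s)$ with $2\le s\le m-1$, so $X$ is invertible on $\mc[X_w]$ with $X^{-1}(w)=\lambda w$, $\lambda\ne 0$, and $P_m(w)$ is a nonzero solution of $S_m x=0$ lying in $\mc[X_w]$. Since $P_m(w)\in\mc[X_w]$ one has $\mc[X_{P_m(w)}]\subseteq\mc[X_w]$, so this submodule also has no $\theta_s$-fixed vector for $2\le s\le m-1$; together with $S_m P_m(w)=0$ this is exactly Definition \ref{Def:leveln}, so $P_m(w)$ is of level $m$. Hence every degree $n$ quantized Serre relation is of level $n$ (and is moreover primitive by Theorem \ref{maintheorem}, as it must be).

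For the second half, any level $n$ element lies in $\ker(S_n)\subset\mathfrak{I}(V)$ by the definition of level together with $\mathfrak{I}(V)=\bigoplus_{n\ge 2}\ker(S_n)$; thus the two-sided ideal $\mathcal{K}\subseteq T(V)$ generated by the union of all level $n$ elements satisfies $\mathcal{K}\subseteq\mathfrak{I}(V)$. By the first half this union contains all quantized Serre relations $P_m(E_i^N E_j)$, $i\ne j$, hence so does $\mathcal{K}$. Now I would invoke the classical description of $\uqg$ — for generic $q$, $\mathfrak{N}(V)\cong\uqg^+$, equivalently $\mathfrak{I}(V)$ is generated as a two-sided ideal by the quantized Serre relations — to get $\mathfrak{I}(V)\subseteq\mathcal{K}$, whence $\mathcal{K}=\mathfrak{I}(V)$. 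In particular this confirms, in the Kac-Moody case, the conjecture of the introduction.

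The only ingredient external to this paper is that last quoted fact; given it, the corollary reduces to Proposition \ref{primitives} plus minor bookkeeping: matching the degree $m=2-c_{ij}$ with the index $N+1$, identifying $P_m(w)$ with the Serre element, and checking that $\mc[X_{P_m(w)}]$ inherits from $\mc[X_w]$ the absence of $\theta_s$-fixed vectors for $s<m$ — the latter being the point that needs the most care, since "level $m$" is a condition on the module generated by $P_m(w)$ and not merely on $w$. All three follow from the monomial analysis of Section \ref{diagonaltype}.
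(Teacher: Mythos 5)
Your proposal is correct and follows essentially the same route as the paper, which simply combines the lemma $q_{ij}q_{ji}=q_{ii}^{c_{ij}}$ (verifying hypothesis (\ref{qij})) with Proposition \ref{primitives} and the classical fact that quantized Serre relations generate $\mathfrak{I}(V)$ for generic $q$. Your extra care about "level $m$" being a condition on $\mc[X_{P_m(w)}]$ rather than on $w$ alone is exactly what the monomial analysis of Section \ref{diagonaltype} supplies, so there is no gap.
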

\begin{remark}
The corollary above explains the reason for the importance of symmetrizable Kac-Moody algebras: they contain sufficient Serre relations. This gives a strong constraint on the representation theory of such Lie algebras.
\end{remark}

\section{Differential algebras of Nichols alegbras}\label{pairing}

The first part of this section is devoted to the generalization of some results in \cite{Fang10}, then we recall the construction of a pairing between two Nichols algebras.

\subsection{Pairings between Nichols algebras}
In this subsection, we want to recall a result of \cite{Chen07} and \cite{Mas09}. It should be remarked that these two constructions, though in different languages (one is dual to the other), are essentially the same.
\par
Let 
$$H=\bigoplus_{n=0}^\infty H_n,\ \ B=\bigoplus_{n=0}^\infty B_n$$
be two graded Hopf algebras with finite dimensional graded components.
\begin{definition}
A generalized Hopf pairing $\phi:H\times B\ra\mc$ is called graded if for any $i\neq j$, $\phi(H_i,B_j)=0$.
\end{definition}
\indent
We fix a graded Hopf pairing $\phi_0:H\times B\ra\mc$ between $H$ and $B$ and assume moreover that $\phi_0$ is non-degenerate.
\par
Let $V\in{}_H^H\mathcal{YD}$ and $W\in{}_B^B\mathcal{YD}$ be two Yetter-Drinfel'd modules, $\phi_1:V\times W\ra\mc$ be a non-degenerate bilinear form such that for any $h\in H$, $b\in B$, $v\in V$ and $w\in W$,
\begin{equation}\label{eq1}
\phi_1(h.v,w)=\sum\phi_0(h,w_{(-1)})\phi_1(v,w_{(0)}),
\end{equation}
\begin{equation}\label{eq2}
\phi_1(v,b.w)=\sum\phi_0(v_{(-1)},b)\phi_1(v_{(0)},w),
\end{equation}
where $\delta_V(v)=\sum v_{(-1)}\ts v_{(0)}$ and $\delta_W(w)=\sum w_{(-1)}\ts w_{(0)}$ are $H$-comodule and $B$-comodule structure maps, respectively.
\par
Let $T(V)$, $T(W)$ be the corresponding braided tensor Hopf algebras and $\fn(V)$, $\fn(W)$ be Nichols algebras associated to $V$ and $W$, respectively. Let $\fb_H(V)=\fn(V)\sharp H$ and $\fb_B(W)=\fn(W)\sharp B$ denote crossed biproducts defined in Section \ref{biproduct}.
\begin{theorem}[\cite{Chen07},\cite{Mas09}]\label{Chen}
There exists a unique graded Hopf pairing
$$\phi:\fb_H(V)\times\fb_B(W)\ra\mc,$$
extending $\phi_0$ and $\phi_1$. Moreover, it is non-degenerate.
\end{theorem}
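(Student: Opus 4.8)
The plan is to reduce everything to the bosonizations $\fb_H(V)=\fn(V)\sharp H$ and $\fb_B(W)=\fn(W)\sharp B$, which are ordinary (non-braided) Hopf algebras, and to first build the pairing on the ``free'' objects $T(V)\sharp H$ and $T(W)\sharp B$ before descending to the Nichols quotients. Concretely I would carry out three steps: (i) construct a graded Hopf pairing $\psi\colon(T(V)\sharp H)\times(T(W)\sharp B)\ra\mc$ extending $\phi_0$ and $\phi_1$; (ii) show that $\mathfrak{I}(V)\sharp H$ lies in the left radical of $\psi$ and $\mathfrak{I}(W)\sharp B$ in the right radical, so that $\psi$ factors through a pairing $\phi$ on $\fb_H(V)\times\fb_B(W)$; (iii) identify these radicals exactly, which yields non-degeneracy. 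Uniqueness is then immediate, since $\fb_H(V)$ is generated as an algebra by (the images of) $V$ and $H$, and a Hopf pairing is determined by its values on a set of algebra generators.

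For step (i): because $T(V)$ is free on $V$, the algebra $T(V)\sharp H$ is generated by $V\oplus H$ subject only to the internal relations of $H$ and the smash cross-relations $h\cdot v=\sum(h_{(1)}\cdot v)h_{(2)}$. One defines $\psi$ on words in these generators by iterating the two defining identities of a generalized Hopf pairing, starting from $\psi|_{H\times B}=\phi_0$, $\psi|_{V\times W}=\phi_1$ and $\psi(V,B)=\psi(H,W)=0$ (the latter two forced by the grading). Well-definedness amounts to checking compatibility with those defining relations: the relations of $H$ are handled by $\phi_0$ being a Hopf pairing, there are no relations among the elements of $V$, and the cross-relations are exactly where the two compatibility conditions (\ref{eq1}) and (\ref{eq2}) linking $\phi_1$ to $\phi_0$ are used. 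Since all homogeneous components of both Hopf algebras are finite-dimensional, it is equivalent --- and often cleaner --- to build $\psi$ by induction on the total degree; unwinding the recursion on $V^{\ts n}\times W^{\ts n}$ shows that there $\psi$ equals $\phi_1^{\ts n}$ precomposed with the total braiding operator, that is, under the evident identification, with the symmetrization operator $S_n$.

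For steps (ii) and (iii): the left radical $N_L=\{x:\psi(x,-)=0\}$ is a Hopf ideal, hence a homogeneous coideal, and since $\phi_1$ is non-degenerate it meets $V$ only in $0$; so $N_L$ is concentrated in degrees $\ge 2$ on the $T(V)$-side. The technical heart is the lemma that the symmetrization operator $S_n$ on $V^{\ts n}$ is adjoint, with respect to $\phi_1^{\ts n}$, to the symmetrization operator $S_n$ on $W^{\ts n}$ --- this follows from (\ref{eq1}) and (\ref{eq2}) together with the fact that the braiding $\sigma_{V,V}$ is adjoint to $\sigma_{W,W}$ across $\phi_1^{\ts 2}$. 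Granting this, $\psi(x,y)=\phi_1^{\ts n}(S_nx,y)$ for $x\in V^{\ts n}$, $y\in W^{\ts n}$, and since $\psi$ is graded and $\phi_1^{\ts n}$ is non-degenerate we obtain $N_L\cap V^{\ts n}=\ker S_n$; a short computation with the smash coproduct, using that $\psi$ restricts to the non-degenerate form $\phi_0$ on $H\times B$, then gives $N_L=\big(\bigoplus_{n\ge 2}\ker S_n\big)\sharp H$, which is $\mathfrak{I}(V)\sharp H$ by Proposition \ref{schauenburg}; symmetrically, $N_R=\mathfrak{I}(W)\sharp B$. Hence $\psi$ descends to $\phi$ on $(T(V)/\mathfrak{I}(V))\sharp H\times(T(W)/\mathfrak{I}(W))\sharp B=\fb_H(V)\times\fb_B(W)$ with trivial left and right radicals, i.e. non-degenerate, and by construction it extends $\phi_0$ and $\phi_1$ and is graded.

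I expect the main obstacle to be precisely the adjointness lemma of step (iii) --- that the adjoint of $S_n$ with respect to $\phi_1^{\ts n}$ is the corresponding operator on $W^{\ts n}$ --- because this is what links the radical of $\psi$ to the kernels $\ker S_n$, and hence, via Proposition \ref{schauenburg}, to the defining ideals $\mathfrak{I}(V)$ and $\mathfrak{I}(W)$. The secondary nuisance is the well-definedness check against the smash cross-relations in step (i). Both are direct but bookkeeping-heavy Sweedler-notation computations, and once they are in place the remainder is formal; this is in essence the argument of \cite{Chen07} and \cite{Mas09}, recast through the bosonization.
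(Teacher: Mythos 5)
The paper does not prove Theorem \ref{Chen}: it is imported verbatim from \cite{Chen07} and \cite{Mas09}, so there is no in-paper argument to compare against. Your three-step outline is essentially the argument of those references (and of the Rosso--Schauenburg duality): construct the pairing on $T(V)\sharp H\times T(W)\sharp B$ from the universal property together with the compatibility conditions (\ref{eq1})--(\ref{eq2}), observe that on $V^{\ts n}\times W^{\ts n}$ it is $\phi_1^{\ts n}$ precomposed with the symmetrization operator (up to a harmless order reversal coming from axiom (2) of a Hopf pairing), and identify the radicals with $\mathfrak{I}(V)\sharp H$ and $\mathfrak{I}(W)\sharp B$ via Proposition \ref{schauenburg}. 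The adjointness lemma you flag as the main obstacle is in fact immediate from (\ref{eq1}) and (\ref{eq2}): they show that $\sigma_{V,V}$ is adjoint to $\sigma_{W,W}$ across $\phi_1^{\ts 2}$, hence $T_w$ is adjoint to $T_{w^{-1}}$ and $S_n=\sum_w T_w$ is self-adjoint, so your sketch is sound.
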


\indent
In the following argument, attention will be paid to a particular case of this theorem. In our framework, we take $H=B$ and $V=W$ in Theorem \ref{Chen}, $\phi_0:H\times H\ra\mc$ a non-degenerate graded Hopf pairing and $\phi_1:V\times V\ra\mc$ a non-degenerate bilinear form satisfying the compatibility conditions above. So the machinery in Theorem \ref{Chen} produces a non-degenerate graded Hopf pairing
$$\phi:\fb_H(V)\times \fb_H(V)\ra\mc.$$
This will be the main tool in our further construction.

\subsection{Double construction and Schr\"odinger representation}\label{7.2}
In this subsection, as a review, we will apply results from \cite{Fang10}, Section 2 to the case of Nichols algebras.
\par
Suppose that $H$ is a graded Hopf algebra, $V\in{}_H^H\mathcal{YD}$ is an $H$-Yetter-Drinfel'd module and $\phi:\fb_H(V)\times\fb_H(V)\ra\mc$ is the non-degenerate graded Hopf pairing constructed in the last section.
\par
We recall some results from \cite{Fang10} briefly.
\par
To indicate their positions, we denote $\fb^+_H(V)=\fb_H(V)$, $\fb^-_H(V)=\fb_H(V)$ and
$$D_\phi(\fb_H(V))=D_\phi(\fb^+_H(V),\fb^-_H(V))$$
their quantum double. The Schr\"odinger representation defined in \cite{Fang10} gives a module algebra type action of $D_\phi(\fb_H(V))$ on these two components.
\begin{enumerate}
\item On $\fb_H^+(V)$, the action is given by: for $a,x\in\fb^+_H(V)$ and $b\in\fb^-_H(V)$,
$$(a\ts 1).x=\sum a_{(1)}xS(a_{(2)}),$$
$$(1\ts b).x=\sum \vp(x_{(1)},S(b))x_{(2)}.$$
\item On $\fb_H^-(V)$, the action is given by: for $a\in\fb^+_H(V)$ and $b,y\in\fb^-_H(V)$,
$$(a\ts 1).y=\sum\vp(a,y_{(1)})y_{(2)},$$
$$(1\ts b).y=\sum b_{(1)}yS(b_{(2)}).$$
\end{enumerate}
As has been shown in \cite{Fang10}, these actions give both $\fb^+_H(V)$ and $\fb^-_H(V)$ a $D_\phi(\fb_H(V))$-module algebra structure.
\par
Moreover, we can construct the Heisenberg double 
$$H_\phi(\fb_H(V))=H_\phi(\fb^+_H(V),\fb^-_H(V)),$$
which, in general, is not a Hopf algebra.
\par
In \cite{Fang10}, we defined an action of $D_\phi(\fb_H(V))$ on $H_\phi(\fb_H(V))$ by: for $a,a'\in\fb_H^+(V)$ and $b,b'\in\fb_H^-(V)$,
$$(a\ts b).(b'\sharp a')=\sum (a_{(1)}\ts b_{(1)}).b'\sharp (a_{(2)}\ts b_{(2)}).a',$$ 
which makes $H_\phi(\fb_H(V))$ a $D_\phi(\fb_H(V))$-module algebra.
\par
The following two results are also obtained in \cite{Fang10}.
\begin{proposition}[\cite{Fang10}]\label{17}
We define a $D_\phi(\fb_H(V))$-comodule structure on $\fb^+_H(V)$ and $\fb^-_H(V)$ by: for $a\in\fb_H^+(V)$ and $b\in\fb_H^-(V)$,
$$\fb^+_H(V)\ra D_\phi(\fb_H(V))\ts\fb_H^+(V),\ \ a\mapsto\sum a_{(1)}\ts 1\ts a_{(2)},$$
$$\fb^-_H(V)\ra D_\phi(\fb_H(V))\ts\fb_H^-(V),\ \ b\mapsto\sum 1\ts b_{(1)}\ts b_{(2)}.$$
Then with the Schr\"odinger representation and comodule structures defined above, both $\fb_H^+(V)$ and $\fb_H^-(V)$ are in the category ${}_{D_\phi}^{D_\phi}\mathcal{YD}$.
\end{proposition}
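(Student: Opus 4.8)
The plan is to check directly that the coactions of Proposition~\ref{17} are compatible with the Schr\"odinger action. Write $D=D_\phi(\fb_H(V))$, $A=\fb_H^+(V)$, $B=\fb_H^-(V)$, so that $D=A\ts B$ as a coalgebra and the canonical maps $\iota_A\colon a\mapsto a\ts 1$ and $\iota_B\colon b\mapsto 1\ts b$ are both algebra and coalgebra embeddings of $A$, respectively $B$, into $D$. The first observation is that the proposed $D$-comodule structure on $A$ is nothing but $(\iota_A\ts id)\circ\Delta_A$; hence its coassociativity and counitality are immediate from the coalgebra axioms for $A$, with no computation, and likewise for $B$ via $\iota_B$ and $\Delta_B$. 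So the only substantive point is the Yetter-Drinfel'd compatibility, which for $d\in D$ and $x\in A$ reads
\begin{equation*}
\sum d_{(1)}x_{(-1)}\ts d_{(2)}.x_{(0)}=\sum (d_{(1)}.x)_{(-1)}d_{(2)}\ts (d_{(1)}.x)_{(0)}.
\end{equation*}

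This identity is an equation in $d$, and it is a standard fact that the set of $d\in D$ for which it holds (for all $x$) is a subalgebra of $D$; since $D$ is generated as an algebra by $\iota_A(A)$ and $\iota_B(B)$, it suffices to treat $d=a\ts 1$ and $d=1\ts b$. When $d=a\ts 1$ the Schr\"odinger action becomes the left adjoint action $a.x=\sum a_{(1)}xS(a_{(2)})$ and the coaction takes values in $A\ts A\subset D\ts A$, so what has to be verified is exactly that $A$, with the adjoint action and the regular coaction $\Delta_A$, is a Yetter-Drinfel'd module over itself --- the canonical Yetter-Drinfel'd structure carried by any Hopf algebra with invertible antipode. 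When $d=1\ts b$ one substitutes $(1\ts b).x=\sum\phi(x_{(1)},S(b))x_{(2)}$, expands both sides in Sweedler notation, and meets the product $\iota_B(b_{(1)})\cdot x_{(-1)}$ on the left-hand side (and $(b_{(1)}.x)_{(-1)}\cdot\iota_B(b_{(2)})$ on the right); evaluating these with the multiplication rule of the quantum double produces new pairings which, contracted against the ones already present and simplified using the generalized Hopf pairing axioms together with $\phi(S(a),b)=\phi(a,S^{-1}(b))$, should bring the two sides into coincidence.

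The claim for $\fb_H^-(V)$ follows from the same argument with $A$ and $B$ interchanged: $D_\phi(A,B)$ is isomorphic as a Hopf algebra to $D_{\phi'}(B,A)$ for the transposed pairing $\phi'(b,a)=\phi(a,b)$, and under this isomorphism the Schr\"odinger action on $B$ corresponds to the one on the ``$+$''-factor of the new double, so no genuinely new verification is needed. I expect the $d=1\ts b$ case to be the main obstacle: the cross relations of the quantum double entangle the $A$- and $B$-legs, and the whole difficulty lies in placing each antipode ($S$ versus $S^{-1}$) and each Sweedler index in the slot prescribed by those relations; everything else --- the comodule axioms and the case $d\in A$ --- is either immediate or a classical fact.
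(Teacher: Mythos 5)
The paper does not actually prove this proposition: it is imported verbatim from \cite{Fang10} (it is one of the results the author explicitly says he is recalling from there), so there is no in-text argument to compare yours against. Judged on its own terms, your strategy is sound and would succeed. The reduction to algebra generators is legitimate --- the set of $d\in D$ satisfying the compatibility for all $x$ is a unital subspace closed under products, as one checks for $gh$ by applying the condition first for $h$, then for $g$ with $x$ replaced by $h_{(1)}.x$, and right-multiplying the first leg by $h_{(2)}$ --- and the case $d=a\ts 1$ is, as you say, exactly the classical adjoint Yetter--Drinfel'd structure of a Hopf algebra on itself. The case $d=1\ts b$, which you leave as a ``should,'' does close up and is shorter than you fear: expanding the left-hand side with the double's cross relation produces the three pairing factors $\phi(S^{-1}(x_{(1)}),b_{(1)})\,\phi(x_{(3)},b_{(3)})\,\phi(x_{(4)},S(b_{(4)}))$, of which the last two contract to counits by the axiom $\phi(a,bb')=\sum\phi(a_{(1)},b)\phi(a_{(2)},b')$ together with the antipode axiom, and the surviving factor matches the right-hand side's $\phi(x_{(1)},S(b_{(1)}))$ via $\phi(S^{-1}(a),b)=\phi(a,S(b))$. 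The one genuinely loose claim is the asserted Hopf algebra isomorphism $D_\phi(A,B)\cong D_{\phi'}(B,A)$: the flip $a\ts b\mapsto b\ts a$ reverses the order of the cross products, so that identification requires op/cop decorations to be literally true. It is safer, and no longer, to run the mirror computation for $\fb_H^-(V)$ directly, which your generator reduction equally permits.
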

Moreover, the Heisenberg double $H_\phi(\fb_H(V))$ is in the $D_\phi(\fb_H(V))$-Yetter-Drinfel'd module category.
\begin{theorem}[\cite{Fang10}]
We define a $D_\phi(\fb_H(V))$-comodule structure on $H_\phi(\fb_H(V))$ by: for $a\in\fb^+_H(V)$ and $b\in\fb^-_H(V)$,
$$H_\phi(\fb_H(V))\ra D_\phi(\fb_H(V))\ts H_\phi(\fb_H(V)),$$
$$b\sharp a\mapsto \sum  (1\ts b_{(1)}).(a_{(1)}\ts 1)\ts b_{(2)}\sharp a_{(2)}.$$
Then with the module structure defined above and this comodule structure, $H_\phi(\fb_H(V))$ is in the category ${}_{D_\phi}^{D_\phi}\mathcal{YD}$.
\end{theorem}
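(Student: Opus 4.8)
The plan is to sidestep a direct verification of the coaction axioms and of the Yetter-Drinfel'd compatibility for $H_\phi(\fb_H(V))$, and instead to recognise the module and comodule structures in the statement as the \emph{tensor product}, formed inside ${}_{D_\phi}^{D_\phi}\mathcal{YD}$, of the two Yetter-Drinfel'd modules $\fb^-_H(V)$ and $\fb^+_H(V)$ of Proposition \ref{17}. Since ${}_{D_\phi}^{D_\phi}\mathcal{YD}$ is a tensor category (Section \ref{section2}), this gives the conclusion at once; in particular the Heisenberg double's algebra structure plays no role here, as the assertion concerns only the underlying module-comodule.

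First I would identify the module structure. As a vector space $H_\phi(\fb_H(V))=\fb^-_H(V)\ts\fb^+_H(V)$ via $b\sharp a\leftrightarrow b\ts a$, and $D_\phi(\fb_H(V))$ is by construction the tensor-product coalgebra $\fb^+_H(V)\ts\fb^-_H(V)$, so that $\Delta(a\ts b)=\sum(a_{(1)}\ts b_{(1)})\ts(a_{(2)}\ts b_{(2)})$. With these two remarks the defining formula $(a\ts b).(b'\sharp a')=\sum(a_{(1)}\ts b_{(1)}).b'\sharp(a_{(2)}\ts b_{(2)}).a'$ recalled in Section \ref{7.2} is, read symbol by symbol, the diagonal $D_\phi(\fb_H(V))$-action on $\fb^-_H(V)\ts\fb^+_H(V)$: the two factor actions occurring on the right are precisely the Schr\"odinger actions of $D_\phi(\fb_H(V))$ on $\fb^-_H(V)$ and on $\fb^+_H(V)$, a point one checks by writing $a\ts b=(a\ts 1)(1\ts b)$ in $D_\phi(\fb_H(V))$ and reducing to the two explicit half-formulas of Section \ref{7.2}.

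Next I would identify the comodule structure. By Proposition \ref{17} the coactions are $a\mapsto\sum(a_{(1)}\ts 1)\ts a_{(2)}$ on $\fb^+_H(V)$ and $b\mapsto\sum(1\ts b_{(1)})\ts b_{(2)}$ on $\fb^-_H(V)$; hence the tensor-product comodule structure on $\fb^-_H(V)\ts\fb^+_H(V)$ sends $b\ts a$ to $\sum\big((1\ts b_{(1)})(a_{(1)}\ts 1)\big)\ts(b_{(2)}\ts a_{(2)})$, the inner product being the multiplication of the quantum double. This is exactly the map $b\sharp a\mapsto\sum(1\ts b_{(1)}).(a_{(1)}\ts 1)\ts b_{(2)}\sharp a_{(2)}$ in the statement, once ``$.$'' is read --- as it must be, for the first tensorand to lie in $D_\phi(\fb_H(V))$ --- as the quantum double product.

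With these two identifications in hand I would simply invoke Proposition \ref{17} together with the tensor-category property of ${}_{D_\phi}^{D_\phi}\mathcal{YD}$: the object $\fb^-_H(V)\ts\fb^+_H(V)$ lies in ${}_{D_\phi}^{D_\phi}\mathcal{YD}$, hence so does $H_\phi(\fb_H(V))$ with the stated structures. I expect the only real obstacle to be organisational bookkeeping --- keeping apart the various meanings of ``$.$'' (the quantum double multiplication versus the action of $D_\phi(\fb_H(V))$ on three different modules) and checking the two routine identities $\Delta_{D_\phi}(a\ts b)=\sum(a_{(1)}\ts b_{(1)})\ts(a_{(2)}\ts b_{(2)})$ and $a\ts b=(a\ts 1)(1\ts b)$ in $D_\phi(\fb_H(V))$; once these are settled nothing further is computed, since all of it is absorbed into the fact that ${}_{D_\phi}^{D_\phi}\mathcal{YD}$ is closed under tensor products. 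A head-on check of coassociativity, the counit axiom and the Yetter-Drinfel'd condition from the explicit double multiplication and the pairing axioms is of course also possible, but it would amount to re-deriving the tensor-closedness of ${}_{D_\phi}^{D_\phi}\mathcal{YD}$ by hand.
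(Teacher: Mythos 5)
Your argument is correct, and it is worth noting that the paper itself offers no proof of this statement to compare against: the theorem is recalled verbatim from \cite{Fang10}, so any justification here is necessarily your own. Your route is a clean one. The key observation --- that the recalled action $(a\ts b).(b'\sharp a')=\sum (a_{(1)}\ts b_{(1)}).b'\sharp (a_{(2)}\ts b_{(2)}).a'$ is exactly the diagonal action attached to the tensor-product coalgebra structure of $D_\phi(\fb_H(V))$, and that the stated coaction $b\sharp a\mapsto \sum (1\ts b_{(1)})(a_{(1)}\ts 1)\ts b_{(2)}\sharp a_{(2)}$ is exactly the codiagonal coaction built from the two coactions of Proposition \ref{17} --- identifies the underlying module-comodule of $H_\phi(\fb_H(V))$ with the tensor product $\fb_H^-(V)\ts\fb_H^+(V)$ formed in ${}_{D_\phi}^{D_\phi}\mathcal{YD}$, and the conclusion then follows from the closure of that category under tensor products, which the paper asserts in Section \ref{section2}. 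The two bookkeeping identities you flag do hold: $\Delta_{D_\phi}(a\ts b)=\sum(a_{(1)}\ts b_{(1)})\ts(a_{(2)}\ts b_{(2)})$ is part of the definition of the double as a tensor-product coalgebra, and $(a\ts 1)(1\ts b)=a\ts b$ follows at once from the double's multiplication formula with $\Delta(1)=1\ts 1$, so the two half-formulas of Section \ref{7.2} really do assemble into the action of a general $a\ts b$. What your approach buys is that the Heisenberg double's multiplication is irrelevant to this particular assertion, and no Yetter-Drinfel'd compatibility needs to be checked by hand; what it does not address (and is not asked to) is the separate statement, also from \cite{Fang10}, that this action makes $H_\phi(\fb_H(V))$ a $D_\phi(\fb_H(V))$-module \emph{algebra}, which genuinely uses the Heisenberg product and the pairing axioms.
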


\subsection{Construction of differential algebras}\label{consda}
In this section, we want to construct the differential algebra of a Nichols algebra. It generalizes the construction of quantized Weyl algebra in \cite{Fang10}.
\par
But it should be remarked that the construction in \cite{Fang10} concentrates on a specific Hopf algebra, say $\mc[\mathbb{Z}^n]$ and a special action on the Nichols algebra. So to generalize it, we need some more work.
\par
Let $\fn^+(V)$ and $\fn^-(V)$ be Nichols algebras contained in $\fb_H^+(V)$ and $\fb^-_H(V)$ respectively. We would like to give both $\fn^+(V)$ and $\fn^-(V)$ a $D_\phi(\fb_H(V))$-Yetter-Drinfel'd module algebra structure.
\par
For $\fn^+(V)$, the $D_\phi(\fb_H(V))$-module structure is given by the Schr\"odinger representation and the $D_\phi(\fb_H(V))$-comodule structure is given by:
$$\fn^+(V)\ra D_\phi(\fb_H(V))\ts \fn^+(V),$$
$$b\mapsto \sum (b_{(1)}\sharp (b_{(2)})_{(-1)}\ts 1)\ts (b_{(2)})_{(0)}.$$
This is obtained from the formula in Proposition \ref{17}.
\begin{proposition}
With the structures defined above, $\fn^+(V)$ is a $D_\phi(\fb_H(V))$-Yetter-Drinfel'd module algebra.
\end{proposition}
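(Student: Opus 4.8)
The strategy is to deduce everything by restriction from the fact, established in Proposition \ref{17} and in \cite{Fang10}, that $\fb_H^+(V)$ is a $D_\phi(\fb_H(V))$-Yetter-Drinfel'd module algebra. The $D_\phi(\fb_H(V))$-module structure put on $\fn^+(V)$ is by definition the Schr\"odinger action restricted to $\fn^+(V)=\fn(V)\ts 1\subseteq\fb_H^+(V)$, and the comodule structure written above is precisely the restriction of the coaction $a\mapsto\sum a_{(1)}\ts 1\ts a_{(2)}$ of Proposition \ref{17}, because in the biproduct $\fb_H^+(V)=\fn(V)\sharp H$ one has $\Delta_{\fb_H^+(V)}(b\ts 1)=\sum\bigl(b_{(1)}\sharp(b_{(2)})_{(-1)}\bigr)\ts\bigl((b_{(2)})_{(0)}\ts 1\bigr)$. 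Consequently it suffices to check three stability statements: that $\fn(V)\ts 1$ is a subalgebra of $\fb_H^+(V)$, a $D_\phi(\fb_H(V))$-submodule, and a $D_\phi(\fb_H(V))$-subcomodule. Once these hold, every axiom of a Yetter-Drinfel'd module algebra for $\fn^+(V)$ (the module and comodule axioms, their compatibility, and the module-algebra and comodule-algebra conditions) is the restriction to $\fn^+(V)$ of the corresponding identity for $\fb_H^+(V)$, so nothing more is needed.

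Two of the three stability statements are immediate. First, $(a\ts 1)(a'\ts 1)=aa'\ts 1$ in $\fb_H^+(V)$, so $\fn(V)\ts 1$ is a subalgebra. Second, the right-hand tensor leg of $\Delta_{\fb_H^+(V)}(b\ts 1)$ recalled above is $(b_{(2)})_{(0)}\ts 1$, which lies in $\fn(V)\ts 1$ because the $H$-coaction on $\fn(V)$ keeps $(b_{(2)})_{(0)}$ inside $\fn(V)$; hence the coaction of $\fn^+(V)$ takes values in $D_\phi(\fb_H(V))\ts\fn^+(V)$, that is, $\fn^+(V)$ is a subcomodule. For the submodule property we use that $D_\phi(\fb_H(V))=\fb_H^+(V)\ts\fb_H^-(V)$ is generated as an algebra by the two subalgebras $\fb_H^+(V)\ts 1$ and $1\ts\fb_H^-(V)$, so it is enough to see that each of them preserves $\fn(V)\ts 1$. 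The action of $1\ts b$ ($b\in\fb_H^-(V)$) has the form $\sum\phi(x_{(1)},S(b))\,x_{(2)}$; evaluated at $x=a\ts 1$ it is a linear combination of the elements $(a_{(2)})_{(0)}\ts 1$, again by the shape of $\Delta_{\fb_H^+(V)}$ on $\fn(V)\ts 1$, hence it stays in $\fn^+(V)$.

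It remains to handle the action of $a\ts 1$ ($a\in\fn(V)$) on $\fb_H^+(V)$, which is the adjoint action $x\mapsto\sum a_{(1)}xS(a_{(2)})$, and this is the genuinely delicate point. Since $\mathrm{ad}(aa')=\mathrm{ad}(a)\circ\mathrm{ad}(a')$ for $a,a'\in\fb_H^+(V)$ and $\fn(V)\ts 1$ is generated as an algebra by the elements $v\ts 1$ with $v\in V$, it suffices to show that $\mathrm{ad}(v\ts 1)$ preserves $\fn(V)\ts 1$ for $v\in V$. Using the coproduct $\Delta_{\fb_H^+(V)}(v\ts 1)=(v\ts 1)\ts(1\ts 1)+\sum(1\ts v_{(-1)})\ts(v_{(0)}\ts 1)$ together with the antipode formula of the biproduct, a direct computation identifies $\mathrm{ad}(v\ts 1)$ on $\fn(V)\ts 1$ with the braided adjoint action of $v$ inside ${}_H^H\mathcal{YD}$, namely $a\mapsto\bigl(va-\sum(v_{(-1)}.a)v_{(0)}\bigr)\ts 1$; and $\fn(V)$, being a Hopf algebra in ${}_H^H\mathcal{YD}$, is closed under its own braided adjoint action. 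Reconciling the adjoint action of the bosonization with the braided adjoint action, while keeping track of the $H$-coaction legs produced by the biproduct antipode, is the one step that requires real care; the rest is formal.
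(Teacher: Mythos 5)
Your proposal is correct in outline and shares the paper's skeleton: everything reduces to checking that $\fn^+(V)$ is stable under the Schr\"odinger action and that the coaction lands in $D_\phi(\fb_H(V))\ts\fn^+(V)$, after which all the Yetter--Drinfel'd module algebra axioms follow by restriction from $\fb_H^+(V)$. Where you diverge is in the one nontrivial stability check, the adjoint action of $\fb_H^+(V)\ts 1$. The paper handles a general element $b\sharp h\in\fn(V)\sharp H$ in a single biproduct computation, arriving at $\sum b_{(1)}\bigl((b_{(2)})_{(-1)}h_{(1)}.x\bigr)S\bigl((b_{(2)})_{(0)}\bigr)\sharp 1$, which visibly lies in $\fn(V)\sharp 1$ because $\fn(V)$ is an $H$-stable subalgebra closed under its braided antipode. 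You instead reduce to the degree-one generators $v\ts 1$ via multiplicativity of $\mathrm{ad}$ and identify $\mathrm{ad}(v\ts 1)$ with the braided adjoint action $a\mapsto va-\sum(v_{(-1)}.a)v_{(0)}$. Both routes work; the paper's single computation is more uniform (and, done once, it is no harder than the $v\in V$ case), while yours is conceptually transparent and makes the braided commutator visible. Two loose ends in your version: first, $\fb_H^+(V)\ts 1$ is generated by $\fn(V)\sharp 1$ \emph{and} $1\sharp H$, so you must also note that $\mathrm{ad}\bigl((1\sharp h)\ts 1\bigr)$ acts on $x\sharp 1$ as $h.x\sharp 1$, which stays in $\fn^+(V)$ since $\fn(V)$ is an $H$-module --- trivial, but currently skipped; second, the identification of $\mathrm{ad}(v\ts 1)$ with the braided adjoint action is exactly the computation that carries the content of the proposition, and you defer it with the phrase ``a direct computation.'' The formula you assert is the correct and standard one, so this is not a wrong step, but in a complete write-up that computation (or the paper's equivalent for general $b\sharp h$) must actually appear.
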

\begin{proof}
At first, we need to show that the Schr\"odinger representation preserves $\fn^+(V)$.
\par
Let $a\in\fb_H(V)$. It suffices to prove that if $x\in\fn^+(V)$, then both $(a\ts 1).x$ and $(1\ts a).x$ are contained in $\fn^+(V)$. For this purpose, because the action is linear, we write $a=b\sharp h\in\fn^+(V)\sharp H$. From the formula given in Radford's crossed biproduct,
\begin{eqnarray*}
((b\sharp h)\ts 1).x &=& \sum (b\sharp h)_{(1)}xS((b\sharp h)_{(2)})\\
&=& \sum (b_{(1)}\sharp (b_{(2)})_{(-1)}h_{(1)})(x\sharp 1)S((b_{(2)})_{(0)}\sharp h_{(2)})\\
&=&
\sum (b_{(1)}((b_{(2)})_{(-3)}h_{(1)}.x)\sharp(b_{(2)})_{(-2)}h_{(2)})(1\sharp S(h_{(3)})S((b_{(2)})_{(-1)}))(S((b_{(2)})_{(0)})\sharp 1)\\
&=& 
\sum (b_{(1)}((b_{(2)})_{(-3)}h_{(1)}.x)\sharp (b_{(2)})_{(-2)}h_{(2)}S(h_{(3)})S((b_{(2)})_{(-1)}))(S((b_{(2)})_{(0)})\sharp 1)\\
&=& 
\sum b_{(1)}((b_{(2)})_{(-1)}h_{(1)}.x)S((b_{(2)})_{(0)})\sharp 1,
\end{eqnarray*}
which is in $\fn^+(V)$.
\par
For the other action, we have:
$$(1\ts (b\sharp h)).x=\sum \phi(x_{(1)},S(b\sharp h))x_{(2)}.$$
From the definition of the crossed biproduct, when restricted to $\fn^+(V)$, the coproduct gives $\Delta:\fn^+(V)\ra \fb_H(V)\ts\fn^+(V)$, so the result is in $\fn^+(V)$.
\par
Thus the action and coaction of $D_\phi(\fb_H(V))$ on $\fn^+(V)$ are both well defined and as a consequence, $\fn^+(V)$ is a $D_\phi(\fb_H(V))$-module algebra.
\par
These structures are compatible because we have seen that the coaction defined above is just the restriction of the coproduct in $D_\phi(\fb_H(V))$ on $\fn^+(V)$.
\end{proof}
\indent
The same argument, once applied to $\mathfrak{N}^-(V)$, implies that $\fn^-(V)$ is a $D_\phi(\fb_H(V))$-Yetter-Drinfel'd module algebra.
\par
As remarked after the definition of Yetter-Drinfel'd modules, we may use the natural braiding in the category ${}_{D_\phi}^{D_\phi}\mathcal{YD}$ to give $\fn^-(V)\ts\fn^+(V)$ an associative algebra structure, which is denoted by $W_\phi(V)$ and is called the differential algebra of the Nichols algebra $\fn(V)=\fn^-(V)$. 
\par
This gives a natural action of $W_\phi(V)$ on $\fn^-(V)$, where $\fn^+(V)\subset W_\phi(V)$ acts by "differential".
\begin{remark}
\begin{enumerate}
\item
It should be pointed out that $\fn^-(V)\ts\fn^+(V)$ is a subalgebra of $H_\phi(\fb_H(V))$. This can be obtained from the definition of the braiding $\sigma$ in the category ${}_{D_\phi}^{D_\phi}\mathcal{YD}$ and the formula for the action of $D_\phi(\fb_H(V))$ on $\fb_H^-(V)$.Moreover, it is exactly the subalgebra of $H_\phi(\fb_H(V))$ generated by $\fn^-(V)$ and $\fn^+(V)$.
\item
This action of $W_\phi(V)$ on $\fn^-(V)$ can be explained as follows: we consider the trivial $\fn^+(V)$-module $\mc$ given by the counit $\ve$, then
$$Ind_{\fn^+(V)}^{W_\phi(V)}(\mc.1)=W_\phi(V)\ts_{\fn^+(V)}\mc.1$$
is isomorphic to $\fn^-(V)$ as a vector space and from this, $\fn^-(V)$ can be regarded as a $W_\phi(V)$-module.
\end{enumerate}
\end{remark}

\subsection{Non-degeneracy assumption}\label{nondeg}
We should point out that results in previous sections do not depend on the non-degeneracy of the generalized Hopf pairing. Some results concerned with this property will be discussed in this subsection. 
\par
Recall the notation $\fn(V)=\fn^-(V)$. So $W_\phi(V)$ acts on $\fn(V)$ by: for $x\in\fn^+(V)\subset W_\phi(V)$ and $y\in\fn(V)$,
$$x.y=\sum\phi(x,y_{(1)})y_{(2)}.$$
\indent
Because the generalized Hopf pairing is graded and non-degenerate, results in \cite{Fang10} can be generalized to the present context.
\par
Let $v_1,\cdots,v_n$ be a basis of $V$.
\begin{lemma}
Let $y\in\fn(V)$, $y\notin\mc^*$ such that for any basis element $v_i\in\fn^+(V)\subset W_\phi(V)$, $v_i.y=0$. Then $y=0$.
\end{lemma}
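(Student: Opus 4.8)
The plan is to use that, via the pairing $\phi$, the action of $\fn^+(V)$ on $\fn^-(V)=\fn(V)$ is the transpose of right multiplication inside $\fn^+(V)$, and then to combine this with the non-degeneracy of $\phi$ and the fact that a Nichols algebra is generated in degree one.

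First I would record the adjunction identity $\phi(z,x.y)=\phi(zx,y)$ for all $z,x\in\fn^+(V)$ and $y\in\fn^-(V)$: unwinding $x.y=\sum\phi(x,y_{(1)})y_{(2)}$ gives $\phi(z,x.y)=\sum\phi(x,y_{(1)})\phi(z,y_{(2)})$, while axiom (2) of a generalized Hopf pairing applied to the product $zx$ gives $\phi(zx,y)=\sum\phi(z,y_{(2)})\phi(x,y_{(1)})$, and the two agree. Taking $z\in\fn^+(V)$ arbitrary and $x=v_i$, the hypothesis $v_i.y=0$ for every basis vector $v_i$ of $V$ yields $\phi(zv_i,y)=0$ for all such $z$ and all $i$. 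Since $\fn^+(V)$ is a Nichols algebra it is generated as an algebra by its degree-one component $V$, hence $\bigoplus_{m\ge1}\fn^+(V)_m=\sum_i\fn^+(V)v_i$, so $\phi(x,y)=0$ for every $x\in\bigoplus_{m\ge1}\fn^+(V)_m$. Decomposing $y=\sum_{m\ge0}y_m$ into homogeneous components and using that $\phi$ is graded, this says $\phi(\fn^+(V)_m,y_m)=0$ for all $m\ge1$; since $\phi$ restricts to a non-degenerate graded pairing $\fn^+(V)\times\fn^-(V)\ra\mc$ with finite-dimensional graded pieces, we get $y_m=0$ for all $m\ge1$. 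Therefore $y=y_0\in\mc$, and as $y\notin\mc^*$ by hypothesis, $y=0$.

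The one point that requires care is the claim that $\phi$ restricts to a non-degenerate pairing between the Nichols subalgebras $\fn^+(V)\subset\fb_H^+(V)$ and $\fn^-(V)\subset\fb_H^-(V)$; this is precisely the content of the non-degeneracy assumption of this subsection, and follows from Theorem \ref{Chen} by restricting the non-degenerate graded pairing on $\fb_H(V)\times\fb_H(V)$ to the degreewise finite-dimensional Nichols parts. Should one wish to avoid this, a parallel argument works using only non-degeneracy of $\phi_1$ on $V$: then the vanishing of all $v_i.y$ is equivalent to the vanishing of the component of $\Delta(y)$ in $\fn(V)_1\ts\fn(V)$, and iterating the coproduct together with the injectivity of the total iterated comultiplication $\fn(V)_m\ra V^{\ts m}$ — which is exactly Proposition \ref{schauenburg} — again forces all positive-degree components of $y$ to vanish.
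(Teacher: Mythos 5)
Your argument is correct: the adjunction identity $\phi(z,x.y)=\phi(zx,y)$ follows from axiom (2) of a generalized Hopf pairing exactly as you compute, generation of $\fn^+(V)$ in degree one then kills $\phi(-,y)$ on all positive degrees, and non-degeneracy of the graded pairing forces $y$ into $\mc$, hence $y=0$ under the hypothesis $y\notin\mc^*$. The paper gives no proof of this lemma (it only points to the analogous statement in \cite{Fang10}), but what you wrote is the standard argument that is being invoked there, and your closing remark correctly isolates the one input that must be justified, namely that the pairing of Theorem \ref{Chen} restricts non-degenerately to the Nichols parts --- which is exactly the content of Section \ref{nondeg} and of the explicit pairing $\phi:\mathfrak{N}(V^*)\ts\mathfrak{N}(V)\ra\mc$ described afterwards.
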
 
\begin{proposition}\label{nondegenerate}
Let $y\in\fn(V)$ such that $y\neq 0$. Then there exists $x\in\fn^+(V)\subset W_\phi(V)$ such that $x.y$ is a non-zero constant.
\end{proposition}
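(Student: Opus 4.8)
The plan is to prove this by induction on the top degree of $y$, feeding the inductive step with the preceding lemma. Recall that $\fn(V)=\fn^-(V)$ carries its $\mathbb{N}$-grading from the Nichols algebra structure, that the pairing $\phi$ is graded, and that a basis vector $v_i\in V=\fn(V)_1\subset\fn^+(V)\subset W_\phi(V)$ acts on $y\in\fn(V)$ by $v_i.y=\sum\phi(v_i,y_{(1)})y_{(2)}$. Since $\phi$ is graded and $v_i$ sits in degree one, $\phi(v_i,y_{(1)})\neq 0$ forces $y_{(1)}$ to have degree one, hence $y_{(2)}$ to have degree one less than $y$; so for homogeneous $y$ of degree $m$ the element $v_i.y$ lies in degree $m-1$, and for an arbitrary $y$ of top degree $N$ (nonzero component in degree $N$, nothing above) every $v_i.y$ has top degree at most $N-1$. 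This is precisely what makes the $v_i$ behave like lowering operators and what couples to the lemma.

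Now the induction. If $N=0$ then $y\in\fn(V)_0=\mc$ is a nonzero scalar and $x=1\in\fn^+(V)$ already does the job, since $1.y=y$. Suppose $N\geq 1$ and the statement is known for all nonzero elements of top degree strictly less than $N$. Since $y$ has a nonzero component in degree $N\geq 1$ it is not a scalar, so $y\notin\mc^*$; as $y\neq 0$, the contrapositive of the preceding lemma provides a basis vector $v_i$ with $z:=v_i.y\neq 0$. By the first paragraph $z$ has top degree at most $N-1$, so the induction hypothesis yields $x'\in\fn^+(V)$ with $x'.z$ a nonzero constant. Setting $x=x'v_i$, with the product taken inside the subalgebra $\fn^+(V)\subset W_\phi(V)$, and using that $\fn(V)$ is a module over the algebra $W_\phi(V)$, we obtain $x.y=(x'v_i).y=x'.(v_i.y)=x'.z$, a nonzero constant. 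This closes the induction.

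The point that requires care is the bookkeeping that makes the last line legitimate: that $\fn^+(V)$ is genuinely a subalgebra of $W_\phi(V)$ (part of the construction, since $\fn^-(V)\ts\fn^+(V)$ is the subalgebra of $H_\phi(\fb_H(V))$ generated by $\fn^-(V)$ and $\fn^+(V)$) and that $x.y=\sum\phi(x,y_{(1)})y_{(2)}$ is a genuine module-algebra action of $W_\phi(V)$ on $\fn(V)$, so that $(x'v_i).y=x'.(v_i.y)$; together with the grading step of the first paragraph, which rests on $\phi$ being graded and on the coproduct of $\fn^-(V)$ respecting the total degree. All of these are already in place from the construction of $W_\phi(V)$ and its action on $\fn(V)$, so what remains is just the formal induction above; no use of the nondegeneracy of $\phi$ beyond what the preceding lemma already invokes is needed.
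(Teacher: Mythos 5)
Your induction is correct, and every step you flag as needing care does in fact hold: the pairing axiom $\phi(xx',y)=\sum\phi(x,y_{(2)})\phi(x',y_{(1)})$ together with coassociativity gives $(x'v_i).y=x'.(v_i.y)$, and the gradedness of $\phi$ gives the degree-lowering property. Note, however, that the paper itself offers no written proof here --- it only remarks that the results of \cite{Fang10} carry over --- so there is no argument of the paper's to compare against line by line. It is worth pointing out that your detour through the Lemma and an induction on the top degree can be shortcut: if $y_N\neq 0$ is the top homogeneous component of $y$, non-degeneracy of the graded pairing on $\fn^+(V)_N\times\fn(V)_N$ directly provides $x\in\fn^+(V)_N$ with $\phi(x,y_N)\neq 0$, and then gradedness forces $x.y=\sum\phi(x,y_{(1)})y_{(2)}=\phi(x,y_N)\cdot 1$, since the only terms $y_{(1)}$ that can pair non-trivially with $x$ are those of degree $N$, which come only from $\Delta(y_N)$ and have $y_{(2)}$ of degree $0$. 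Your route instead consumes non-degeneracy only through the preceding Lemma, which is a perfectly legitimate trade-off; the direct argument just makes the role of non-degeneracy more transparent and avoids the induction entirely.
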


\subsection{Explicit construction of pairing}
Let $V\in{}^H_H\mathcal{YD}$ be a finite dimensional Yetter-Drinfel'd module. Then its linear dual $V^*\in{}^H_H\mathcal{YD}$ is also a Yetter-Drinfel'd module such that the evaluation map $\text{ev}:V^*\ts V\ra\mc$ is in ${}^H_H\mathcal{YD}$ (see \cite{AS02}, Section 1.2).
\par
As both $V$ and $V^*$ are braided vector spaces with braidings coming from the Yetter-Drinfel'd module structures, both $T(V)$ and $T(V^*)$ are braided Hopf algebras. The canonical pairing $V^*\ts V\ra\mc$ given by the evaluation map $f\ts v\mapsto f(v)$ extends to a generalized pairing
$$\phi:T(V^*)\ts T(V)\ra \mc.$$
Moreover, the radicals of this pairing are $\mathfrak{I}(V^*)$ and $\mathfrak{I}(V)$, respectively (\cite{AG98}, Example 3.2.23, Definition 3.2.26), so it descends to a non-degenerate generalized pairing 
$$\phi:\mathfrak{N}(V^*)\ts \mathfrak{N}(V)\ra\mc.$$
This is the main ingredient we will use to prove the main theorem.

\section{Applications to Nichols algebras}
At first, we want to show that the differential algebra above generalizes the skew-derivation defined by N.D. Nichols \cite{Nic78}, explicitly used in M. Gra\~na \cite{Gra00} and I. Heckenberger in \cite{Hec08} for Nichols algebras of diagonal type.
\subsection{Derivations}\label{derivation}
We keep notations from previous sections, fix a basis $v_1,\cdots,v_n$ of $V$ and a dual basis $v_1^*,\cdots,v_n^*$ with respect to the evaluation map.
\begin{definition}
For $a\in T(V^*)$, we define the left derivation $\partial_a^L:T(V)\ra T(V)$ by: for $y\in\fn(V)$, 
$$\partial_a^L(y)=\sum \phi(a,y_{(1)})y_{(2)}.$$
If $a=v_i^*$, the notation $\partial_i^L$ is adopted for $\p_{v_i^*}^L$.
\end{definition}
The left derivation $\p_a^L$ descends to $\fn(V)$ and gives $\p_a^L:\fn(V)\ra\fn(V)$.
\par
In the proposition below, we suppose that the Nichols algebra is of diagonal type.
\begin{proposition}
For $i=1,\cdots,n$, the definition of $\p_i^L$ above coincides with the one given in \cite{Hec08}.
\end{proposition}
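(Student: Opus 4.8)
The plan is to characterize both maps by the same short list of properties and then invoke uniqueness. Recall that on a Nichols algebra of diagonal type the skew-derivation $\p_i$ of \cite{Hec08} is the unique linear endomorphism of $\fn(V)$ with $\p_i(1)=0$, $\p_i(v_j)=\delta_{ij}$, and the $\chi_i$-twisted Leibniz rule
$$\p_i(yz)=\p_i(y)\,z+\chi_i(g)\,y\,\p_i(z)\qquad(y\ \text{homogeneous of }G\text{-degree }g);$$
uniqueness is clear because $\fn(V)$ is generated in degree one, so any homogeneous element of positive degree is a sum of products $v_jz$ with $\deg z$ smaller, and an induction on degree determines the map. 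It therefore suffices to check that $\p_i^L$ enjoys exactly these three properties.

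For the first two: since $\mathfrak{I}(V)$ and $\mathfrak{I}(V^*)$ are the radicals of $\phi$, the pairing descends to the non-degenerate pairing $\phi:\fn(V^*)\ts\fn(V)\ra\mc$ of the previous section, and $\p_i^L(y)=\sum\phi(v_i^*,y_{(1)})y_{(2)}$ is well defined on $\fn(V)$. Because $\phi$ is graded and $v_i^*$ lies in degree one, $\p_i^L(1)=\phi(v_i^*,1)\,1=0$, and from $\Delta(v_j)=v_j\ts 1+1\ts v_j$ we get $\p_i^L(v_j)=\phi(v_i^*,v_j)\,1+\phi(v_i^*,1)\,v_j=\delta_{ij}$.

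The substantive point is the twisted Leibniz rule. Since $\Delta:\fn(V)\ra\fn(V)\underline{\ts}\fn(V)$ is an algebra map, $\Delta(yz)=\Delta(y)\Delta(z)$, where the product on the right is the braided one; expanding $(y_{(1)}\ts y_{(2)})(z_{(1)}\ts z_{(2)})=y_{(1)}[\s(y_{(2)}\ts z_{(1)})]'\ts[\s(y_{(2)}\ts z_{(1)})]''z_{(2)}$ and using that $\phi(v_i^*,-)$ kills everything outside degree one, only the part of $\Delta(yz)$ whose left leg has degree one contributes to $\p_i^L(yz)$. As $\s$ preserves the degree of each leg, this forces $\deg y_{(1)}+\deg z_{(1)}=1$, so there are exactly two families of surviving terms: those with $z_{(1)}=1$, contributing $\sum\phi(v_i^*,y_{(1)})\,y_{(2)}z=\p_i^L(y)\,z$, and those with $y_{(1)}=1$ (hence $y_{(2)}=y$), contributing $\sum\phi(v_i^*,[\s(y\ts z_{(1)})]')\,y\,z_{(2)}$. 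In the diagonal (group-algebra) case the Yetter-Drinfel'd braiding with left argument $y$ homogeneous of $G$-degree $g$ and $w\in V$ is $\s(y\ts w)=(g.w)\ts y$, and $g$ acts on $V$ diagonally through the characters, so $\phi(v_i^*,g.w)=\chi_i(g)\,\phi(v_i^*,w)$; hence the second family contributes $\chi_i(g)\,y\,\p_i^L(z)$. This gives $\p_i^L(yz)=\p_i^L(y)z+\chi_i(g)\,y\,\p_i^L(z)$; on a monomial $y=v_{j_1}\cdots v_{j_k}$ it reads $\p_i^L(yz)=\p_i^L(y)z+(q_{j_1i}\cdots q_{j_ki})\,y\,\p_i^L(z)$, and for $y=v_j$ it is $\p_i^L(v_jz)=\delta_{ij}z+q_{ji}\,v_j\,\p_i^L(z)$, which is precisely the recursion defining $\p_i$ in \cite{Hec08}. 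The induction on degree then yields $\p_i^L=\p_i$.

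The one place that needs care is matching the scalar coming out of the braided coproduct and braided product with the normalization used in \cite{Hec08}: this amounts to keeping track of the convention $\s_{V,V}(v_i\ts v_j)=q_{ij}v_j\ts v_i$ recalled earlier and of the left/right orientation of the derivation. Once those are pinned down the identification is a routine unwinding of Sweedler notation.
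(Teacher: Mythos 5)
Your proof is correct, but it takes a genuinely different route from the paper's. The paper argues directly on monomials: since $\phi$ is graded and $v_i^*$ has degree one, only the $(1,k-1)$-component of the coproduct of $v_{i_1}\cdots v_{i_k}$ can pair nontrivially with $v_i^*$, and that component is governed by the $(1,k-1)$-shuffles $\mathfrak{S}_{1,k-1}=\{1,\s_1,\s_1\s_2,\dots,\s_1\cdots\s_{k-1}\}$; expanding the action of these shuffles in the diagonal case reproduces Heckenberger's explicit formula term by term. You instead characterize both operators axiomatically --- value on $1$ and on the generators $v_j$, plus the $\chi_i$-twisted Leibniz rule --- and conclude by uniqueness on an algebra generated in degree one. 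Your derivation of the twisted Leibniz rule from $\Delta(yz)=\Delta(y)\Delta(z)$ in $\fn(V)\underline{\ts}\fn(V)$ is sound, and it is in effect the diagonal specialization of the Leibniz-rule lemma that the paper proves separately a few lines later in the same subsection, so nothing in your argument is wasted. What your approach buys is independence from the explicit shuffle formula and a cleaner structural statement; what it costs is the need to know that the operator of \cite{Hec08} is pinned down by exactly those three properties, together with the burden of matching the $q_{ji}$ versus $q_{ij}$ convention and the left/right orientation --- a point you rightly flag, and one the paper's direct term-by-term comparison sidesteps.
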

\begin{proof}
From the definition of $\p_i^L$, 
$$\p_i^L(v_{i_1}\cdots v_{i_k})=\sum\phi(v_i^*,(v_{i_1}\cdots v_{i_k})_{(1)})(v_{i_1}\cdots v_{i_k})_{(2)}.$$
Then after the definition of the coproduct in Nichols algebras, the fact that $\sigma$ is of diagonal type and $\phi$ is graded, we obtain that the terms satisfying 
$$\phi(v_i^*,(v_{i_1}\cdots v_{i_k})_{(1)})\neq 0$$
are contained in those given by the shuffle action $\mathfrak{S}_{1,k-1}$ in the coproduct formula. A simple calculation shows that 
$$\mathfrak{S}_{1,k-1}=\{1,\s_1,\s_1\s_2,\cdots,\s_1\cdots\s_{k-1}\},$$
which gives exactly the formula in the definition after Heckenberger.
\end{proof}
\begin{remark}
The advantage of our definition for differential operators on Nichols algebras are twofold:
\begin{enumerate}
\item This is a global and functorial construction, we never need to work in a specific coordinate system at the beginning;
\item We make no assumption on the type of the braiding, it has less restriction and can be applied to more general cases, for example: Hecke type, quantum group type, and so on.
\end{enumerate}
\end{remark}

\begin{remark}
In the same spirit, for $a\in T(V^*)$, the right differential operator $\p_a^R$ can be similarly defined by considering the right action: for $y\in T(V)$, 
$$\p_a^R(y)=\sum y_{(1)}\phi(a,y_{(2)}).$$
The left derivation $\p_a^R$ descends to $\fn(V)$ and gives $\p_a^R:\fn(V)\ra\fn(V)$.
\end{remark}
Some results from \cite{Hec08} can be generalized with simple proofs.
\begin{lemma}
Let $x\in T(V)$ and $a\in T(V^*)$. Then:
$$\Delta(\p_a^L(x))=\sum\p_a^L(x_{(1)})\ts x_{(2)},\ \ 
\Delta(\p_a^R(x))=\sum x_{(1)}\ts\p_a^R(x_{(2)}).$$
\end{lemma}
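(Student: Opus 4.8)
The plan is to reduce everything to the coassociativity of the coproduct on $\fn(V)$ together with the defining property of a generalized Hopf pairing. Recall that by definition $\p_a^L(x) = \sum \phi(a, x_{(1)}) x_{(2)}$. The whole statement is essentially the observation that the linear map $\p_a^L$ is, up to the pairing, a ``partial evaluation'' of the coproduct on the first tensor slot, and such partial evaluations commute with further applications of $\Delta$ by coassociativity. So the first step is simply to expand the left-hand side: write $\Delta(\p_a^L(x)) = \sum \phi(a, x_{(1)}) \Delta(x_{(2)})$, which is legitimate because $\Delta$ is linear and $\phi(a, x_{(1)})$ is just a scalar. Then apply coassociativity to rewrite $\sum x_{(1)} \ts \Delta(x_{(2)})$ as $\sum x_{(1)} \ts x_{(2)} \ts x_{(3)}$ in the usual Sweedler bookkeeping.

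The second step is to recognize the resulting expression. After coassociativity we have $\sum \phi(a, x_{(1)}) \, x_{(2)} \ts x_{(3)}$, and we want this to equal $\sum \p_a^L(x_{(1)}) \ts x_{(2)} = \sum \phi(a, x_{(1)(1)}) x_{(1)(2)} \ts x_{(2)}$. Applying coassociativity again to the right-hand side, $\sum x_{(1)(1)} \ts x_{(1)(2)} \ts x_{(2)} = \sum x_{(1)} \ts x_{(2)} \ts x_{(3)}$, so the two sides coincide term by term. In other words, the lemma is purely formal: it holds for \emph{any} coalgebra map evaluation of this shape and does not require $\fn(V)$ to be of diagonal type, nor does it use non-degeneracy of $\phi$. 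The one subtlety to be careful about is that $\phi$ here is the generalized Hopf pairing $\phi: \mathfrak{N}(V^*) \ts \mathfrak{N}(V) \to \mc$ (or $T(V^*) \ts T(V) \to \mc$), so one should note that $a$ is not touched by any of the coproduct manipulations --- it sits inert in the first argument of $\phi$ --- which is exactly why the computation goes through without invoking axioms (1)--(3) of the pairing at all. For the right derivation $\p_a^R(x) = \sum x_{(1)} \phi(a, x_{(2)})$ the argument is the mirror image: expand $\Delta(\p_a^R(x)) = \sum \Delta(x_{(1)}) \phi(a, x_{(2)})$ and apply coassociativity in the same way.

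The main (and only) obstacle worth flagging is bookkeeping on \emph{where} the coproduct lives: since $\p_a^L$ is defined on $T(V)$ but is asserted to descend to $\fn(V)$, one should make sure the coproduct being used is the braided coproduct $\Delta: T(V) \to T(V) \underline{\ts} T(V)$ (resp. on $\fn(V)$), and that $x_{(2)}$ in $\sum \phi(a, x_{(1)}) x_{(2)}$ genuinely lies in $T(V)$ rather than in the biproduct $\fb_H(V)$; this is precisely the point noted earlier in the excerpt that ``when restricted to $\fn^+(V)$, the coproduct gives $\Delta: \fn^+(V) \ra \fb_H(V)\ts\fn^+(V)$,'' so the second tensor factor stays in the Nichols algebra and the formula makes sense. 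Once that is pinned down, the proof is two applications of coassociativity and a relabelling of Sweedler indices, with no hard estimate or structural input required.
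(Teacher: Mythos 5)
Your proposal is correct and matches the paper's own proof, which is exactly the same two-line computation: expand $\Delta(\p_a^L(x))=\sum\phi(a,x_{(1)})x_{(2)}\ts x_{(3)}$ by coassociativity and reidentify the result as $\sum\p_a^L(x_{(1)})\ts x_{(2)}$. Your additional remarks on where the coproduct lives are sensible but not needed beyond what the paper records.
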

\begin{proof}
We prove it for $\p_a^L$:
$$\Delta(\p_a^L(x))=\Delta\left(\sum\phi(a,x_{(1)})x_{(2)}\right)=\sum\phi(a,x_{(1)})x_{(2)}\ts x_{(3)}=\sum\p_a^L(x_{(1)})\ts x_{(2)}.$$
\end{proof}
Moreover, we have following results:
\begin{lemma}
For any $a,b\in T(V^*)$, $\p_a^L\p_b^R=\p_b^R\p_a^L$.
\end{lemma}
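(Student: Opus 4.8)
The plan is to evaluate both composites on an arbitrary element $x\in T(V)$ and collapse everything to coassociativity of the braided coproduct of $T(V)$. Writing the iterated coproduct as $\sum x_{(1)}\ts x_{(2)}\ts x_{(3)}$, which is unambiguous since $\Delta$ is coassociative, I would first expand $\p_b^R(x)=\sum x_{(1)}\,\phi(b,x_{(2)})$, then apply $\p_a^L$. Since $\phi(b,x_{(2)})\in\mc$ is a scalar and $\p_a^L$ is linear, this gives $\sum\p_a^L(x_{(1)})\,\phi(b,x_{(2)})=\sum\phi(a,(x_{(1)})_{(1)})\,(x_{(1)})_{(2)}\,\phi(b,x_{(2)})$, and coassociativity in the form $(\Delta\ts id)\Delta=\Delta^{(2)}$ rewrites this as
$$\p_a^L\p_b^R(x)=\sum\phi(a,x_{(1)})\,x_{(2)}\,\phi(b,x_{(3)}).$$

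Symmetrically, starting from $\p_a^L(x)=\sum\phi(a,x_{(1)})\,x_{(2)}$ and applying $\p_b^R$, one gets $\sum\phi(a,x_{(1)})\,(x_{(2)})_{(1)}\,\phi(b,(x_{(2)})_{(2)})$, and coassociativity in the form $(id\ts\Delta)\Delta=\Delta^{(2)}$ turns this into the very same expression $\sum\phi(a,x_{(1)})\,x_{(2)}\,\phi(b,x_{(3)})$. Hence $\p_a^L\p_b^R=\p_b^R\p_a^L$ on $T(V)$; since both operators descend to $\fn(V)$ (as noted right after their definitions), the identity also holds on $\fn(V)$. One could equivalently package this using the preceding lemma, $\Delta(\p_a^L(x))=\sum\p_a^L(x_{(1)})\ts x_{(2)}$ and $\Delta(\p_b^R(x))=\sum x_{(1)}\ts\p_b^R(x_{(2)})$, but the bare computation with the threefold coproduct is the shortest route.

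There is essentially no obstacle here: the statement is, in the end, just coassociativity together with the fact that the two "legs" acted on by $\p_a^L$ and $\p_b^R$ are disjoint. The only point deserving a word of care is that $\phi(a,-)$ and $\phi(b,-)$ are $\mc$-valued, so they may be moved freely past the braiding $\s$ occurring in $\Delta:T(V)\ra T(V)\underline{\ts}T(V)$; this is what makes the scalar manipulations above legitimate in the braided setting, exactly as in the classical commutative case.
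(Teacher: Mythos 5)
Your proof is correct: both composites reduce, via coassociativity of the braided coproduct, to $\sum\phi(a,x_{(1)})\,x_{(2)}\,\phi(b,x_{(3)})$, and the scalar-valued functionals $\phi(a,-)$, $\phi(b,-)$ indeed commute with everything, so no braiding issue arises. The paper omits the proof of this lemma entirely, evidently regarding it as immediate, and your computation is exactly the argument it leaves implicit (one could also phrase it via the preceding lemma, $\Delta(\p_a^L(x))=\sum\p_a^L(x_{(1)})\ts x_{(2)}$ followed by applying $id\ts\phi(b,\cdot)$, but that is the same calculation).
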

\begin{lemma}
For $x,y\in T(V)$ and $1\leq i\leq n$, we have:
$$\p_i^L(xy)=\p_i^L(x)y+\sum x_{(0)}\phi((v_i^*)_{(-1)},x_{(-1)})\p_{(v_i^*)_{(0)}}^L(y).$$
\end{lemma}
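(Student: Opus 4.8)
The plan is to compute $\p_i^L(xy)$ directly from the definition, exploiting that the coproduct of the braided Hopf algebra $T(V)$ is an algebra map into the braided tensor product and that $\phi$ is compatible with the algebra structure and with the Yetter--Drinfel'd structures. First I would write $\p_i^L(xy)=\sum\phi(v_i^*,(xy)_{(1)})(xy)_{(2)}$ and replace $\Delta(xy)$ by $\Delta(x)\Delta(y)$, the product now being taken in $T(V)\,\underline{\ts}\,T(V)$. Using the formula for multiplication in a braided tensor product recalled in Section \ref{section2}, together with the braiding $\sigma(a\ts b)=\sum a_{(-1)}.b\ts a_{(0)}$ of ${}_H^H\mathcal{YD}$, this yields
$$\Delta(xy)=\sum x_{(1)}\bigl((x_{(2)})_{(-1)}.y_{(1)}\bigr)\ts(x_{(2)})_{(0)}y_{(2)},$$
so that $\p_i^L(xy)=\sum\phi\bigl(v_i^*,x_{(1)}((x_{(2)})_{(-1)}.y_{(1)})\bigr)(x_{(2)})_{(0)}y_{(2)}$.

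Next I would use the multiplicativity of $\phi$ in its second argument together with the primitivity $\Delta(v_i^*)=v_i^*\ts 1+1\ts v_i^*$ to split the scalar $\phi\bigl(v_i^*,x_{(1)}((x_{(2)})_{(-1)}.y_{(1)})\bigr)$ as $\phi(v_i^*,x_{(1)})\,\ve\bigl((x_{(2)})_{(-1)}.y_{(1)}\bigr)+\ve(x_{(1)})\,\phi\bigl(v_i^*,(x_{(2)})_{(-1)}.y_{(1)}\bigr)$. Since $\ve\colon T(V)\ra\mc$ is a morphism in ${}_H^H\mathcal{YD}$ we have $\ve(h.z)=\ve_H(h)\ve(z)$, so the first contribution, after using the counit identities $\sum\ve_H((x_{(2)})_{(-1)})(x_{(2)})_{(0)}=x_{(2)}$ and $\sum\ve(y_{(1)})y_{(2)}=y$, collapses to $\sum\phi(v_i^*,x_{(1)})x_{(2)}\,y=\p_i^L(x)y$, which is the first term of the claimed identity.

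For the second contribution, I would first use $\sum\ve(x_{(1)})(x_{(2)})_{(-1)}\ts(x_{(2)})_{(0)}=x_{(-1)}\ts x_{(0)}$ to reduce it to $\sum\phi\bigl(v_i^*,x_{(-1)}.y_{(1)}\bigr)x_{(0)}y_{(2)}$, and then invoke the compatibility of $\phi$ with the $H$-comodule structure of $v_i^*$, i.e. the extension to $T(V^*)\ts T(V)$ of the relation (\ref{eq2}), $\phi(v_i^*,h.w)=\sum\phi\bigl((v_i^*)_{(-1)},h\bigr)\phi\bigl((v_i^*)_{(0)},w\bigr)$. This turns the second contribution into $\sum\phi\bigl((v_i^*)_{(-1)},x_{(-1)}\bigr)\phi\bigl((v_i^*)_{(0)},y_{(1)}\bigr)x_{(0)}y_{(2)}$; recognizing $\sum\phi((v_i^*)_{(0)},y_{(1)})y_{(2)}=\p_{(v_i^*)_{(0)}}^L(y)$ and commuting the scalars past $x_{(0)}$ gives exactly $\sum x_{(0)}\phi\bigl((v_i^*)_{(-1)},x_{(-1)}\bigr)\p_{(v_i^*)_{(0)}}^L(y)$, as wanted. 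The Sweedler bookkeeping is routine; the step that deserves care is establishing $\phi(v_i^*,h.w)=\sum\phi((v_i^*)_{(-1)},h)\phi((v_i^*)_{(0)},w)$ for arbitrary $w\in T(V)$, since (\ref{eq2}) is imposed only on the generators. I expect this to be the main (though still mild) obstacle: one argues by induction on the degree of $w$, using that $H$ acts on $T(V)$ by algebra maps, $h.(w_1w_2)=\sum(h_{(1)}.w_1)(h_{(2)}.w_2)$, and the multiplicativity of $\phi$, to propagate the identity from degree $1$ to all of $T(V)$.
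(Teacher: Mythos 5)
Your proposal is correct and follows essentially the same route as the paper's own proof: expand $\Delta(xy)$ via the braided tensor product, split $\phi(v_i^*,\cdot)$ using the primitivity of $v_i^*$, and finish with the counit identities and the compatibility relation (\ref{eq2}). The only difference is that you explicitly flag (and sketch) the inductive extension of (\ref{eq2}) from $V$ to all of $T(V)$, a step the paper takes for granted.
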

\begin{proof}
At first, from the definition, 
$$\Delta(xy)=\sum x_{(1)}((x_{(2)})_{(-1)}.y_{(1)})\ts (x_{(2)})_{(0)}y_{(2)}.$$
so the action of $\p_i^L$ gives:
\begin{eqnarray*}
\p_i^L(xy) &=& \sum\phi(v_i^*,x_{(1)}((x_{(2)})_{(-1)}.y_{(1)}))(x_{(2)})_{(0)}y_{(2)}\\
&=& \sum\left(\phi(v_i^*,x_{(1)})\ve((x_{(2)})_{(-1)}.y_{(1)})+\ve(x_{(1)})\phi(v_i^*,(x_{(2)})_{(-1)}.y_{(1)})\right)(x_{(2)})_{(0)}y_{(2)}\\
&=& \sum\phi(v_i^*,x)y+\phi(v_i^*,x_{(-1)}.y_{(1)})x_{(0)}y_{(2)}\\
&=& \p_i^L(x)y+\sum\phi((v_i^*)_{(-1)},x_{(-1)})x_{(0)}\p_{(v_i^*)_{(0)}}^L(y).
\end{eqnarray*}
\end{proof}

\subsection{Taylor Lemma}
This subsection is devoted to generalizing the Taylor Lemma in \cite{Jos81} to Nichols algebras of diagonal type. We keep notations in the last subsection and suppose that $G=\mathbb{Z}^n$.
\par
For any $x\in T(V^*)$, the left derivation $\p_x^L:T(V)\ra T(V)$ will be denoted by $\p_x$ in this subsection.
\par
From now on, we fix a homogeneous primitive element $w\in T(V^*)$ of degree $\alpha$ (the degree given by $\mathbb{Z}^n$), denote $q_{\alpha,\alpha}=\chi(\alpha,\alpha)$ and
$$T(V)^{\p_w}=\{v\in T(V)|\ \p_w(v)=0\}.$$
\begin{remark}
It is easy to see that if $w\in T(V^*)$ is a non-constant element, $\p_w$ is a locally nilpotent linear map as it decreases the degree when acting on an element.
\end{remark}
\begin{lemma}[Taylor Lemma]
Suppose that $q$ is not a root of unity. If there exists some homogeneous element $a\in T(V)$ such that $\p_w(a)=1$, then $a$ is free over $T(V)^{\p_w}$ and as vector spaces, we have:
$$T(V)=T(V)^{\p_w}\ts_\mc \mc[a].$$
\end{lemma}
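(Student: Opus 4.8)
The plan is to build the decomposition $T(V)=T(V)^{\p_w}\ts_\mc\mc[a]$ degree by degree, using the locally nilpotent operator $\p_w$ and the single element $a$ with $\p_w(a)=1$ as the analogue of the "indeterminate" in the classical Taylor expansion. First I would record the basic commutation identity: for a homogeneous primitive $w\in T(V^*)$ of degree $\alpha$, the operator $\p_w$ is a skew-derivation, and iterating the Leibniz-type formula of the last lemma (specialized to $w$ primitive) gives $\p_w(x a)=\p_w(x)a+q_{\alpha,\alpha}^{?}\,x\,\p_w(a)+\cdots$; the key point is that $\p_w(a)=1$ forces a relation of the shape $\p_w(xa^k)=\lambda_k\, x a^{k-1}+(\text{terms }\p_w(x)a^k+\cdots)$ with $\lambda_k$ a $q$-integer in $q_{\alpha,\alpha}$, hence nonzero since $q$ is not a root of unity. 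This is the engine that makes $\mc[a]$ "polynomial-like" and lets $\p_w$ act on it like a $q$-derivative $\p_w(a^k)=\lambda_k a^{k-1}$ up to lower-order corrections.

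Next I would show the sum $\sum_{k\ge 0}T(V)^{\p_w}a^k$ is direct and exhausts $T(V)$. For exhaustion, take any homogeneous $x\in T(V)$; since $\p_w$ is locally nilpotent there is a least $N$ with $\p_w^{N+1}(x)=0$. Proceeding by downward induction on $N$: if $N=0$ then $x\in T(V)^{\p_w}$; otherwise, using the engine identity one constructs $y\in T(V)$ (a $\mc$-multiple of $a$ times $\p_w^N(x)$, with $\p_w^N(x)\in T(V)^{\p_w}$) so that $\p_w^{N}(x-y)=0$, i.e. $x-y$ has strictly smaller $\p_w$-nilpotency degree, and by induction $x-y\in\bigoplus_k T(V)^{\p_w}a^k$, whence so is $x$. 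Here the invertibility of the scalars $\lambda_k$ — the non-root-of-unity hypothesis — is exactly what makes the correction term $y$ available. For directness, suppose $\sum_{k=0}^{m}c_k a^k=0$ with $c_k\in T(V)^{\p_w}$, $c_m\ne0$; applying $\p_w^{m}$ and using $\p_w(c_k)=0$ together with the engine identity kills every term but the top one and leaves $(\prod_{j=1}^m\lambda_j)\,c_m=0$, contradiction. This simultaneously shows $a$ is free over $T(V)^{\p_w}$.

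The main obstacle I anticipate is bookkeeping the braiding/grading scalars in the skew-Leibniz rule precisely enough to see that the relevant coefficients $\lambda_k$ are genuine $q_{\alpha,\alpha}$-integers $[k]_{q_{\alpha,\alpha}}$ (or a nonzero monomial multiple thereof) rather than something that could vanish for subtler reasons; one must use that $w$ is primitive and homogeneous so that only the "top" term of $\Delta(a^k)$ contributes when pairing against $w$, and that $G=\mathbb Z^n$ with the diagonal braiding so all these scalars are powers of $q$. A secondary point is checking that $T(V)^{\p_w}$ is a genuine subalgebra (or at least that the decomposition is as vector spaces only, which is all that is claimed) and that the construction is compatible with the $\mathbb Z^n$-grading so the induction on $\p_w$-nilpotency degree terminates within each finite-dimensional graded piece. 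Once the scalar computation is pinned down, everything else is the standard Taylor-lemma induction.
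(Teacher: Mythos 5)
Your proposal follows essentially the same route as the paper's proof: induction on the $\p_w$-nilpotency degree of a homogeneous $x$, with the correction term $\frac{1}{(N)_{q_{\alpha,\alpha}}!}\frac{1}{q_{\alpha,\lambda}^{N}}\,\p_w^N(x)\,a^N$ (note it must be $a^N$, not $a$ as your parenthetical literally says, so that $\p_w^N$ of the correction reproduces $\p_w^N(x)$), and directness obtained by applying $\p_w$ repeatedly to a dependence relation, the non-root-of-unity hypothesis entering exactly through the nonvanishing of $\p_w^N(a^N)=(N)_{q_{\alpha,\alpha}}!$. The scalar bookkeeping you flag as the main obstacle is dispatched in the paper by the single identity $\p_w^N\bigl(\p_w^N(x)a^N\bigr)=q_{\alpha,\lambda}^{N}\,\p_w^N(x)\,\p_w^N(a^N)$, where $\lambda$ is the $\mathbb{Z}^n$-degree of $\p_w^N(x)$, so your outline is correct as it stands.
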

\begin{proof}
Recall that $\alpha$ is the degree of $w$.
\par
As $T(V)$ is $\mathbb{Z}^n$-graded and $\p_w$ is a linear map of degree $-\alpha$, we may suppose that $a$ is homogeneous of degree $\alpha$.
\par
It is clear that $T(V)^{\p_w}\ts\mc[a]\subset T(V)$. Now we prove the other inclusion.
\par
As we are working under the diagonal hypothesis, a simple computation gives that
$$\p_w^n(a^n)=(n)_{q_{\alpha,\alpha}}!.$$
Then if for some $x_i\in T(V)^{\p_w}$, $\sum x_ia^i=0$, applying $\p_w$ sufficiently many times will force all $x_i$ to be zero.
\par
Let $x\in T(V)$ be a homogeneous element. Then so is $\p_w(x)$. We let $\mu$ denote the degree of $x$ and $n\in\mathbb{N}$ a positive integer such that $\p_w^n(x)\neq 0$ but $\p_w^{n+1}(x)=0$. If $x\in T(V)^{\p_w}$, the lemma is proved. Now we suppose that $x\notin T(V)^{\p_w}$, which implies that $n>0$.
\par
We let $\lambda$ denote the degree of $\p_w^n(x)$ and $q_{\alpha,\lambda}=\chi(\alpha,\lambda)$, then
$$\p_w^n(\p_w^n(x)a^n)=q_{\alpha,\lambda}^n\p_w^n(x)\p_w^n(a^n).$$
If we define
$$X=x-\frac{1}{(n)_{q_{\alpha,\alpha}}!}\frac{1}{q_{\alpha,\lambda}^n}\p_w^n(x)a^n,$$
then $X\equiv x$ (mod $T(V)^{\p_w}\ts\mc[a]$) and
$$\p_w^n(X)=\p_w^n(x)-\frac{1}{(n)_{q_{\alpha,\alpha}}!}\frac{1}{q_{\alpha,\lambda}^n}q_{\alpha,\lambda}^n\p_w^n(x)\p_w^n(a^n)=0.$$
So the lemma follows by induction on the nilpotent degree of $x$.
\end{proof}
For a homogeneous element $b\in T(V)$, we let $\p^\circ b$ denote its degree.
\begin{remark}
Let $w\in T(V^*)$ be an element such that it has non-zero image in $\mathfrak{N}(V)$. As the pairing we are considering is non-degenerate, the element $a$ always exists if $q$ is not a root of unity.
\end{remark}
We denote $q_{ii}=\chi(\alpha_i,\alpha_i)$ and suppose that $q$ is not a root of unity.
\begin{theorem}
Let $w=v_i^*$ for some $1\leq i\leq n$ and $a\in T(V)$ be a homogeneous element satisfying $\p_i(a)=1$. We dispose $-\p^\circ v_i$ the degree of $a$ and define the adjoint action of $\mc[a]$ on $T(V)^{\p_i}$ by: for a homogeneous element $b\in T(V)^{\p_i}$, 
$$a\cdot b=ab-\chi(\p^\circ a,\p^\circ b)ba.$$
Then we can form the crossed product of $\mc[a]$ and $T(V)^{\p_i}$ with the help of this action, which is denoted by $T(V)^{\p_i}\sharp\mc[a]$. With this construction, the multiplication gives an isomorphism of algebra:
$$T(V)\cong T(V)^{\p_i}\sharp\mc[a].$$
\end{theorem}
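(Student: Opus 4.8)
The plan is to bootstrap from the Taylor Lemma, which already identifies $T(V)$ with $T(V)^{\p_i}\ts_\mc\mc[a]$ as vector spaces; what remains is to promote this to an algebra isomorphism for the crossed product structure on the right. First I would extract the structural consequences of the braided Leibniz rule for $\p_i$ proved above. Since $\p_i$ is homogeneous and $\p_i(xy)=\p_i(x)y+\chi(\cdot,\cdot)\,x\,\p_i(y)$, where the twist is the value of the bicharacter on the degrees of $v_i^*$ and $x$, the kernel $T(V)^{\p_i}$ is closed under multiplication, hence a graded subalgebra of $T(V)$; and the element $a$, carrying the degree prescribed in the statement, satisfies $\p_i(ab)=b$ and $\p_i(ba)=\chi(-\p^\circ v_i,\p^\circ b)\,b$ for every homogeneous $b\in T(V)^{\p_i}$. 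Combining these, a one-line computation yields the straightening relation
$$\p_i\bigl(ab-\chi(\p^\circ a,\p^\circ b)ba\bigr)=0,$$
so $\mathrm{ad}_a\colon b\mapsto ab-\chi(\p^\circ a,\p^\circ b)ba$ maps $T(V)^{\p_i}$ into itself; this is precisely the action used to form $T(V)^{\p_i}\sharp\mc[a]$. A further short check, using that $\chi$ is a bicharacter, shows $\mathrm{ad}_a$ is a $\chi$-twisted (braided) derivation of $T(V)^{\p_i}$, namely $\mathrm{ad}_a(bc)=\mathrm{ad}_a(b)\,c+\chi(\p^\circ a,\p^\circ b)\,b\,\mathrm{ad}_a(c)$, which is exactly what makes the crossed product with the polynomial Hopf algebra $\mc[a]$ an associative algebra (an Ore-type extension $T(V)^{\p_i}[a;\mathrm{ad}_a]$ in the braided sense).

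Next I would look at the multiplication map $m\colon T(V)^{\p_i}\sharp\mc[a]\to T(V)$, $b\sharp a^k\mapsto ba^k$. As an algebra, $T(V)^{\p_i}\sharp\mc[a]$ is generated by $T(V)^{\p_i}\sharp 1$ and $1\sharp a$, subject only to the relations internal to $T(V)^{\p_i}$, the polynomial-freeness of $1\sharp a$, and the crossed relation $(1\sharp a)(b\sharp 1)=(\mathrm{ad}_a b)\sharp 1+\chi(\p^\circ a,\p^\circ b)(b\sharp 1)(1\sharp a)$. I would check $m$ respects each: it restricts to the identity on $T(V)^{\p_i}$, it sends $1\sharp a$ to $a$, which by the Taylor Lemma is free over $T(V)^{\p_i}$ (in particular polynomial-free), and it sends the crossed relation to $ab=\mathrm{ad}_a(b)+\chi(\p^\circ a,\p^\circ b)ba$, which holds in $T(V)$ by the definition of $\mathrm{ad}_a$. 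Hence $m$ is a well-defined algebra homomorphism, and it is bijective by the Taylor Lemma (this is where $q$ not a root of unity enters, guaranteeing $a$ free over $T(V)^{\p_i}$ with the powers $a^k$ independent). Therefore $m$ is an algebra isomorphism $T(V)\cong T(V)^{\p_i}\sharp\mc[a]$.

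I expect the main obstacle to be bookkeeping rather than anything deep: pinning down the correct twisting bicharacter at each step so that (a) $\mathrm{ad}_a$ genuinely preserves $T(V)^{\p_i}$, (b) $T(V)^{\p_i}\sharp\mc[a]$ is well defined and associative as a braided smash product, and (c) all cocycle factors cancel when matching the crossed relation against the straightening relation in $T(V)$. Once the Leibniz rule for $\p_i$ and the Taylor Lemma are available, no further analysis is required; the only point demanding real care is that everything lives in the braided category ${}_H^H\mathcal{YD}$, so the grading twists must be tracked consistently with the conventions fixed for $V^*$ and for the degree of $a$, and "crossed product" must be read as the braided smash product with $\mc[a]$ regarded as a braided Hopf algebra concentrated in the relevant degree.
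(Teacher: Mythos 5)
Your proposal is sound and reaches the theorem, but it replaces the paper's key computation with a different (and arguably more standard) argument. The first step coincides: both you and the paper use the braided Leibniz rule to show that $b\mapsto ab-\chi(\p^\circ a,\p^\circ b)ba$ kills $\p_i$ and hence preserves $T(V)^{\p_i}$. Where you diverge is in proving that the multiplication map is an algebra morphism. You identify $T(V)^{\p_i}\sharp\mc[a]$ with a braided Ore extension $T(V)^{\p_i}[a;\mathrm{ad}_a]$, verify that $\mathrm{ad}_a$ is a $\chi$-twisted derivation, and then check the map on the presentation (generators, the PBW normal form $b\sharp a^k$, and the single crossed relation), with bijectivity delegated to the Taylor Lemma; this buys a conceptual, essentially convention-free argument, at the cost of having to justify that the braided smash product with the primitive $a$ really has that presentation. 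The paper instead computes directly: it observes that the smash-product multiplication $(1\ts a^m)(y\ts 1)$ can be written as $m\circ(\Phi\ts id)(\Delta(a^m)(y\ts 1))$, where $\Phi$ is the Dynkin operator of Theorem \ref{theorem1}, and proves $m\circ(\Phi\ts id)(\Delta(a^m)(y\ts 1))=a^my$ by induction on $m$; this keeps everything inside the Dynkin-operator machinery already built in the paper and avoids invoking a presentation of the crossed product. One bookkeeping slip you should fix: with the paper's Leibniz rule the correct identity is $\p_i(ba)=\chi(\p^\circ v_i,\p^\circ b)\,b$, not $\chi(-\p^\circ v_i,\p^\circ b)\,b$; with your stated scalar the "one-line computation" would give $(1-\chi(\p^\circ v_i,\p^\circ b)^{-2})b$ rather than $0$, so the twist must be taken as in the paper for the straightening relation to close.
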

\begin{proof}
At first, we should show that the action defined above preserves $T(V)^{\p_i}$. Let $b\in T(V)^{\p_i}$ and denote $q_{i,b}=\chi(\p^\circ v_i,\p^\circ b)$. Then
\begin{eqnarray*} 
\p_i(a\cdot b) &=& \p_i(ab)-q_{i,b}^{-1}\p_i(ba)\\ 
&=& \p_i(a)b+\chi(\p^\circ v_i,\p^\circ a)a\p_i(b)-q_{i,b}^{-1}\p_i(b)a-q_{i,b}^{-1}q_{i,b}b\p_i(a)\\
&=& \p_i(a)b-b\p_i(a)=0,
\end{eqnarray*}
where equations $\p_i(b)=0$ and $\p_i(a)=1$ are used. So the crossed product is well defined.
\par
Note that $a$ is primitive. We proceed to prove that the multiplication is an algebra morphism: what needs to be demonstrated is that for any $m,n\in\mathbb{N}$ and homogeneous elements $x,y\in T(V)^{\p_w}$, $$(x\ts a^m)(y\ts a^n)=xa^mya^n.$$
From definition, it suffices to show that
$$(1\ts a^m)(y\ts 1)=a^my.$$
\indent
At first, it should be pointed out that from the definition, the action of $\mc[a]$ on $T(V)^{\p_i}$ is just the commutator coming from a braiding. If we let $\Phi$ denote the linear map in Theorem \ref{theorem1}, then:
$$(1\ts a^m)(y\ts 1)=(\Phi\ts id)(\Delta(a^m)(y\ts 1)).$$
So it suffices to prove that
$$m\circ(\Phi\ts id)(\Delta(a^m)(y\ts 1))=a^my.$$
We proceed to show this by induction.
\par
For $m=1$, 
\begin{eqnarray*}
m\circ(\Phi\ts id)(\Delta(a)(y\ts 1)) &=& m\circ(\Phi\ts id)(ay\ts 1+\chi(\p^\circ a,\p^\circ y)y\ts a)\\
&=& ay-\chi(\p^\circ a,\p^\circ y)ya+\chi(\p^\circ a,\p^\circ y)ya\\
&=& ay.
\end{eqnarray*}
\indent
For the general case, we denote $\Delta(a^{m-1})(y\ts 1)=\sum x'\ts x''$, then
\begin{eqnarray*}
&\ &m\circ(\Phi\ts id)(\Delta(a^m)(y\ts 1))\\
&=& m\circ(\Phi\ts id)\left((a\ts 1+1\ts a)\left(\sum x'\ts x''\right)\right)\\
&=& m\circ(\Phi\ts id)\left(\sum ax'\ts x''+\sum \chi(\p^\circ a,\p^\circ x')x'\ts ax''\right)\\
&=& \sum a\Phi(x')x''-\sum \chi(\p^\circ a,\p^\circ x')\Phi(x')ax''+\sum \chi(\p^\circ a,\p^\circ x')\Phi(x')ax''\\
&=& \sum a\Phi(x')x''\\
&=& a^my,
\end{eqnarray*}
where the last equality comes from the induction hypothesis.
\end{proof}
\begin{remark}
In the theorem, we need to take the opposite of the degree of $a$ because it acts as a differential operator, which has negative degree. But to get an isomorphism, a positive one is needed.
\end{remark}

\section{Primitive elements}
This last subsection is devoted to giving a proof of Theorem \ref{maintheorem}.
\par
At first, as in the construction of Section \ref{pairing}, let $\phi:T(V^*)\ts T(V)\ra\mc$ be a generalized pairing which descends to a non-degenerate pairing $\phi:\fn(V^*)\ts \fn(V)\ra\mc$ between Nichols algebras.
\par
For $x\in T(V)$, we let $\Delta_{i,j}(x)$ denote the component of $\Delta(x)$ of bidegree $(i,j)$.
\begin{proposition}\label{prop0}
Let $x\in\ker(S_n)$ be a non-zero solution of equation $S_nx=0$ of level $n$. Then for any $i=1,\cdots,n$, $\p_i^R(x)=0$.
\end{proposition}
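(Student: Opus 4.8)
The plan is to prove the stronger statement that the whole bidegree $(n-1,1)$ component $\Delta_{n-1,1}(x)$ of $\Delta(x)$ vanishes, from which $\p_i^R(x)=0$ for all $i$ is immediate. The reduction is easy: since $\phi$ is graded and $v_i^*$ is homogeneous of degree $1$, only $\Delta_{n-1,1}(x)$ contributes to $\p_i^R(x)=\sum x_{(1)}\phi(v_i^*,x_{(2)})$; writing $\Delta_{n-1,1}(x)=\sum_j z_j\ts v_j$ with $z_j\in V^{\ts (n-1)}$ and using $\phi(v_i^*,v_j)=\delta_{ij}$, one obtains $\p_i^R(x)=z_i$. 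Hence $\p_i^R(x)=0$ for every $i$ is equivalent to $\Delta_{n-1,1}(x)=0$.

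Next I would identify $\Delta_{n-1,1}$, as a linear operator on $V^{\ts n}=V^{\ts (n-1)}\ts V$, with the element $T_n=1+\s_{n-1}+\s_{n-1}\s_{n-2}+\cdots+\s_{n-1}\s_{n-2}\cdots\s_1\in\mc[\fb_n]$ of Proposition \ref{decomposition:sn}. For this one expands $\Delta(v_{i_1}\cdots v_{i_n})=\prod_{k=1}^{n}(v_{i_k}\ts 1+1\ts v_{i_k})$ in $T(V)\underline{\ts}T(V)$ and keeps the summands having exactly one factor equal to $1\ts v_{i_k}$; using the product formula in $A\underline{\ts}B$, the summand coming from the $k$-th position is obtained by braiding the letter $v_{i_k}$ to the right past $v_{i_{k+1}},\dots,v_{i_n}$, i.e.\ by applying $\s_{n-1}\s_{n-2}\cdots\s_k$ (and the identity when $k=n$). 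Summing over $k$ yields $\Delta_{n-1,1}=T_n$; equivalently, $\Delta_{n-1,1}=\sum_{\sigma\in\mathfrak{S}_{n-1,1}}T_\sigma=T_n$.

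Finally I would invoke the level $n$ hypothesis. Since $x$ is of level $n$ we have $S_nx=0$; taking $v=x$, so that $x\in\mc[X_x]=\mc[\fb_n].x$, Proposition \ref{prop3}(2) gives $T_nx=0$ as well. Therefore $\Delta_{n-1,1}(x)=T_n(x)=0$, and by the first paragraph $\p_i^R(x)=0$ for all $i$. The only delicate point is the identification $\Delta_{n-1,1}=T_n$: one must track the order in which the elementary braidings are applied and the Matsumoto lifts of the $(n-1,1)$-shuffles so as to land exactly on $T_n$ rather than on its mirror $1+\s_1+\s_1\s_2+\cdots+\s_1\cdots\s_{n-1}$. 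Should the mirror element appear because of the conventions chosen, one conjugates by the Garside element $\Delta_n$ using Lemma \ref{changing} and uses the centrality of $\theta_n=\Delta_n^2$ to transport the level $n$ condition; but with the natural convention of splitting off the last tensor letter, $T_n$ is obtained directly and no such detour is needed.
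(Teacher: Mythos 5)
Your proposal is correct and follows essentially the same route as the paper: reduce $\p_i^R(x)=0$ to the vanishing of $\Delta_{n-1,1}(x)$ via the gradedness of $\phi$, identify $\Delta_{n-1,1}$ with $T_n=\sum_{\sigma\in\mathfrak{S}_{n-1,1}}T_\sigma$, and conclude from Proposition \ref{prop3}(2) that the level~$n$ hypothesis forces $T_nx=0$. Your extra care about the shuffle convention (ruling out the mirror of $T_n$) is a reasonable refinement of a point the paper treats as clear, but it does not change the argument.
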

\begin{proof}
From the definition of $\p_i^R$ and the fact that $\phi$ is graded, the possible non-zero terms in $\p_i^R(x)$ are those belonging to $\Delta_{n-1,1}(x)$ in $\Delta(x)$.
\par
From the definition of the coproduct, $\Delta_{n-1,1}$ corresponds to the action of the element 
$$\sum_{\sigma\in\mathfrak{S}_{n-1,1}}T_\sigma.$$
It is clear that 
$$\mathfrak{S}_{n-1,1}=\{1,\s_{n-1},\s_{n-1}\s_{n-2},\cdots,\s_{n-1}\cdots\s_1\},$$
so in fact, $\Delta_{n-1,1}$ corresponds to the part $T_n$ in the decomposition of $S_n$. The condition of $x$ being of level $n$ means that $T_nx=0$, thus
$$(id\ts\phi(v_i^*,\cdot))\circ\Delta_{n-1,1}(x)=0,$$
and so $\p_i^R(x)=0$.
\end{proof}

\begin{corollary}\label{cor2}
With the assumption in the last proposition, for any non-constant $a\in T(V^*)$, we have $\partial_a^R(x)=0$.
\end{corollary}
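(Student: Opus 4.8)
The plan is to reduce the statement to the case of a single generator $v_i^*$, which is exactly Proposition \ref{prop0}, by exploiting the fact that $T(V^*)$ is generated as an algebra by $V^*=\mathrm{span}\{v_1^*,\dots,v_n^*\}$ together with a multiplicativity property of the right differential operators. Since $\partial^R$ is linear in its subscript and the pairing $\phi$ is graded, it suffices to treat $a$ homogeneous of degree $k\geq 1$, hence a linear combination of monomials $v_{i_1}^*\cdots v_{i_k}^*$ with $k\geq 1$; so the corollary will follow once we know $\partial^R_{v_{i_1}^*\cdots v_{i_k}^*}(x)=0$ for every such monomial.

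The key step is the identity $\partial^R_{ab}=\partial^R_b\circ\partial^R_a$ for all $a,b\in T(V^*)$, valid already on $T(V)$ (not merely on $\fn(V)$). To prove it I would unwind the definition $\partial^R_a(y)=\sum y_{(1)}\,\phi(a,y_{(2)})$, use axiom (2) of the generalized Hopf pairing in the form $\phi(ab,y)=\sum\phi(a,y_{(2)})\phi(b,y_{(1)})$, and then apply coassociativity of the braided coproduct on $T(V)$ to match the iterated sum $\sum y_{(1)}\,\phi(b,y_{(2)})\,\phi(a,y_{(3)})$ coming from $\partial^R_b\circ\partial^R_a$. One also records $\partial^R_1=\mathrm{id}$, which pins down what ``non-constant'' must mean here: $a$ has vanishing degree-zero component, equivalently $a\in\ker\varepsilon$, since otherwise $\partial^R_a(x)$ would retain a nonzero component in degree $n$.

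Granting the composition identity, an immediate induction on $k$ gives $\partial^R_{v_{i_1}^*\cdots v_{i_k}^*}=\partial^R_{v_{i_k}^*}\circ\cdots\circ\partial^R_{v_{i_2}^*}\circ\partial^R_{v_{i_1}^*}$, so that $\partial^R_{v_{i_1}^*\cdots v_{i_k}^*}(x)=\partial^R_{v_{i_k}^*}\bigl(\cdots\partial^R_{v_{i_1}^*}(x)\bigr)$; the innermost term $\partial^R_{v_{i_1}^*}(x)$ vanishes by Proposition \ref{prop0}, hence the whole expression is $0$. Summing over monomials and over homogeneous components then yields $\partial^R_a(x)=0$ for every non-constant $a\in T(V^*)$. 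I expect no serious obstacle here: the only points requiring care are getting the order of composition right in the multiplicativity identity (dictated by axiom (2) of the generalized Hopf pairing) and checking that this identity is legitimate at the level of $T(V)$, where $\phi$ is only a generalized --- not necessarily non-degenerate --- Hopf pairing; but coassociativity of $\Delta$ on the braided Hopf algebra $T(V)$ is all that is needed for that.
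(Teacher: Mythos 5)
Your argument is correct and is clearly the intended one: the paper states this corollary without proof, and the natural way to fill the gap is exactly your reduction to generators via the composition identity $\partial^R_{ab}=\partial^R_b\circ\partial^R_a$ (which follows from axiom (2) of the generalized Hopf pairing and coassociativity, with no interference from the braiding since the pairing values are scalars), combined with Proposition \ref{prop0}. Your remark that ``non-constant'' must be read as $a\in\ker\varepsilon$ (rather than merely $a\notin\mc\cdot 1$) is a legitimate and worthwhile clarification, since $\partial^R_1=\mathrm{id}$.
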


\begin{proposition}\label{prop5}
Let $x\in T(V)$ be a homogeneous element which is not a constant. If for any non-constant element $a\in T(V^*)$, $\p_a^R(x)=0$, then $\Delta(x)-x\ts 1\in T(V)\ts\mathfrak{I}(V)$ and $x\in\mathfrak{I}(V)$.
\end{proposition}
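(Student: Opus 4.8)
The plan is to convert the vanishing of all non-constant right derivations of $x$ into a statement about the bidegree components of $\Delta(x)$, and then read off the two conclusions using the non-degeneracy of the graded pairing $\phi\colon T(V^*)\ts T(V)\ra\mc$ recalled in Section \ref{pairing} together with the connectedness of the graded coalgebra $\mathfrak{N}(V)$. First I would observe that, since $\phi$ is graded, for a homogeneous $a\in T(V^*)$ of degree $j$ the operator $\p_a^R=(id\ts\phi(a,\cdot))\circ\Delta$ only detects the bidegree $(n-j,j)$ part of $\Delta(x)$, where $n$ is the degree of $x$; that is, $\p_a^R(x)=(id\ts\phi(a,\cdot))\circ\Delta_{n-j,j}(x)$. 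By linearity in $a$ and gradedness of $\phi$ it suffices to test the hypothesis on homogeneous elements, so the hypothesis is equivalent to the requirement that $(id\ts\phi(a,\cdot))\Delta_{n-j,j}(x)=0$ for every $j\geq 1$ and every homogeneous $a\in T(V^*)$ of degree $j$.

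Next, fixing $j\geq 1$, I would write $\Delta_{n-j,j}(x)=\sum_k u_k\ts w_k$ with the $u_k\in V^{\ts(n-j)}$ linearly independent and $w_k\in V^{\ts j}$. The previous reformulation gives $\sum_k\phi(a,w_k)\,u_k=0$ for all $a\in T(V^*)$ of degree $j$, whence $\phi(a,w_k)=0$ for each $k$; since $\phi$ is graded this persists for all $a\in T(V^*)$, so each $w_k$ lies in the radical of $\phi$ on $T(V)$, which is exactly $\mathfrak{I}(V)$. Thus $\Delta_{n-j,j}(x)\in T(V)\ts\mathfrak{I}(V)$ for every $j\geq 1$; since $\Delta_{n,0}(x)=x\ts 1$, summing over $j\geq 1$ yields the first assertion $\Delta(x)-x\ts 1\in T(V)\ts\mathfrak{I}(V)$.

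For the second assertion I would transport the first one through the Hopf algebra quotient $\pi\colon T(V)\ra\mathfrak{N}(V)=T(V)/\mathfrak{I}(V)$. Applying $id\ts\pi$ gives $(id\ts\pi)\Delta(x)=x\ts 1$ in $T(V)\ts\mathfrak{N}(V)$; applying $\pi\ts id$ and using that $\pi$ is a coalgebra morphism gives $\Delta_{\mathfrak{N}(V)}(\pi(x))=\pi(x)\ts 1$; and applying $\ve\ts id$ together with the counit axiom gives $\pi(x)=\ve(\pi(x))1$. As $\pi(x)$ is homogeneous of degree $n\geq 1$, necessarily $\ve(\pi(x))=0$, so $\pi(x)=0$, i.e. $x\in\mathfrak{I}(V)$.

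The only delicate point is the identification, used in the second paragraph, of the radical of $\phi$ restricted to $V^{\ts j}$ with $\mathfrak{I}(V)\cap V^{\ts j}$: this is precisely the non-degeneracy of the descended pairing $\mathfrak{N}(V^*)\ts\mathfrak{N}(V)\ra\mc$ established in Section \ref{pairing}. Everything else is routine bookkeeping with bidegrees and the connectedness of $\mathfrak{N}(V)$.
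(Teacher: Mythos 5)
Your proposal is correct and follows essentially the same route as the paper's proof: both use the gradedness of $\phi$ and a linear-independence argument to place the right-hand tensor factors of $\Delta(x)-x\ts 1$ in the radical of $\phi$, identify that radical with $\mathfrak{I}(V)$ via the non-degeneracy of the descended pairing, and then conclude $x\in\mathfrak{I}(V)$ by a counit argument (the paper applies $\ve\ts id$ directly, while you pass through the quotient $\mathfrak{N}(V)$ first, which is only a cosmetic difference). Your bidegree-by-bidegree bookkeeping is a slightly more explicit version of the paper's choice of linearly independent $x_{(1)}$'s.
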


\begin{proof}
If for any $a\in T(V^*)$, $\p_a^R(x)=0$, then $\sum x_{(1)}\phi(a,x_{(2)})=0$ for any $a$. We choose $x_{(1)}$ to be linearly independent. If $x_{(2)}$ is not a constant, it must be in the right radical of $\phi$, which is exactly $\mathfrak{I}(V)$.
\par
So $\Delta(x)-x\ts 1\in T(V)\ts\mathfrak{I}(V)$. We obtain that $x\in\mathfrak{I}(V)$ by applying $\ve\ts id$ on both sides.
\end{proof}

It is clear that if $x\in\ker(S_2)$ be a non-zero solution of $S_2x=0$ with level $2$, then $x$ is primitive.

\begin{theorem}
Let $n\geq 2$ and $x\in\ker(S_n)$ be a non-zero solution of equation $S_nx=0$ with level $n$. Then $x$ is primitive and it is in $Im(P_n)$.
\end{theorem}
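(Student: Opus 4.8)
The plan is to prove the stronger statement that every homogeneous component $\Delta_{k,n-k}(x)$ of the braided coproduct of $x$ vanishes for $0<k<n$; since $\Delta_{0,n}(x)=1\ts x$ and $\Delta_{n,0}(x)=x\ts 1$, this says precisely that $x$ is primitive, after which $x\in\ker(S_n)\cap Im(P_n)$ is immediate from Proposition~\ref{primitive}. For $k=n-1$ this vanishing is (equivalent to) Proposition~\ref{prop0}, since $\p_i^R(x)=\sum x_{(1)}\phi(v_i^*,x_{(2)})=(id\ts\phi(v_i^*,\cdot))\,\Delta_{n-1,1}(x)$ and the $v_i^*$ span $V^*$; the differential-algebra tools of the previous section (Corollary~\ref{cor2}, Proposition~\ref{prop5}) push this in one-sided form to $\Delta(x)-x\ts 1\in T(V)\ts\mathfrak{I}(V)$ and $x\in\mathfrak{I}(V)$, and the mirror argument with $\p_i^L$ gives $\Delta(x)-1\ts x\in\mathfrak{I}(V)\ts T(V)$, so that $\overline\Delta(x):=\Delta(x)-x\ts 1-1\ts x\in\mathfrak{I}(V)\ts\mathfrak{I}(V)$, which already settles $n\le 3$. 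For general $n$ I would instead argue uniformly in the group algebra $\mc[\fb_n]$, obtaining the vanishing of all $\Delta_{k,n-k}(x)$ at once.

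First I would record, exactly as in the proof of Proposition~\ref{prop0} for $k=n-1$ (and as already computed for $k=1$ in the identification of $\p_i^L$ with the skew derivations of \cite{Hec08}), that the component $\Delta_{k,n-k}$ of the coproduct is induced by the shuffle operator
$$\Theta_k=\sum_{\sigma\in\mathfrak{S}_{k,n-k}}T_\sigma\in\mc[\fb_n],$$
with $\mathfrak{S}_{k,n-k}$ the set of $(k,n-k)$-shuffles of Section~\ref{Dynkin}. Next I would use that $\mathfrak{S}_{k,n-k}$ is exactly the set of minimal-length representatives of the right cosets of the parabolic subgroup $\mathfrak{S}_K=\mathfrak{S}_{\{1,\dots,k\}}\times\mathfrak{S}_{\{k+1,\dots,n\}}$ in $\mathfrak{S}_n$, so that lengths add across $\mathfrak{S}_n=\mathfrak{S}_K\cdot\mathfrak{S}_{k,n-k}$ and the Matsumoto section is multiplicative along it. Collecting terms, this gives the identity
$$S_n=\iota_1(S_k)\,\iota_2(S_{n-k})\,\Theta_k\qquad\text{in }\mc[\fb_n],$$
where $\iota_1\colon\fb_k\hookrightarrow\fb_n$ and $\iota_2\colon\fb_{n-k}\hookrightarrow\fb_n$ are the positional embeddings onto the first $k$ and onto the last $n-k$ strands, and the two commuting prefactors $\iota_1(S_k),\iota_2(S_{n-k})$ are read off from $\sum_{\rho\in\mathfrak{S}_K}T_\rho$ using that $\mathfrak{S}_{\{1,\dots,k\}}$ and $\mathfrak{S}_{\{k+1,\dots,n\}}$ commute.

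Now let $x$ be of level $n$ and put $\mc[X_x]=\mc[\fb_n].x$. Whenever $k\ge 2$, Lemma~\ref{2->1} applied to the positional embedding $\iota_1$ shows that $\iota_1(S_k)$ acts injectively, hence bijectively, on the finite-dimensional space $\mc[X_x]$; when $k=1$ the factor $\iota_1(S_1)$ is just the identity. The same holds for $\iota_2(S_{n-k})$, so the prefactor $\iota_1(S_k)\,\iota_2(S_{n-k})$ — a product of two commuting operators, each bijective on $\mc[X_x]$ — is bijective there. Applying the factorisation to $x$ and using $S_nx=0$ gives $\iota_1(S_k)\,\iota_2(S_{n-k})\,(\Theta_k x)=0$ with $\Theta_k x\in\mc[X_x]$, hence $\Theta_k x=0$, i.e. $\Delta_{k,n-k}(x)=0$. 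Since $0<k<n$ was arbitrary, $\Delta(x)=x\ts 1+1\ts x$, so $x$ is primitive, and Proposition~\ref{primitive} gives $x\in\ker(S_n)\cap Im(P_n)$.

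The hard part is the combinatorial bookkeeping of the middle paragraph: the identification of the coproduct component $\Delta_{k,n-k}$ with $\Theta_k$ for all intermediate $k$ (the existing computations only cover $k=1$ and $k=n-1$), and the parabolic factorisation of $S_n$ with the factors in the correct order — the symmetriser prefactor to the left of $\Theta_k$, which is what lets the invertibility on the cyclic module $\mc[X_x]$ be invoked. One must also be careful with the edge cases $k\in\{1,n-1\}$, where one embedded symmetriser degenerates, since Lemma~\ref{2->1} only applies for $2\le k\le n-1$ (resp. $2\le n-k\le n-1$); this is exactly why those cases are handled separately by noting $S_1=1$.
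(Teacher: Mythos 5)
Your proof is correct, but it follows a genuinely different route from the paper's. The paper proves the theorem as the culmination of Sections 7--9: it uses the non-degenerate pairing $\phi:\mathfrak{N}(V^*)\ts\mathfrak{N}(V)\ra\mc$ and the differential operators $\p_a^R$ (Proposition \ref{prop0}, Corollary \ref{cor2}, Proposition \ref{prop5}) to show that $\Delta(x)-x\ts1\in T(V)\ts\mathfrak{I}(V)$, deduces $S_{n-i}x''=0$ for each component of $\Delta_{i,n-i}(x)=\sum x'\ts x''$, and only then invokes a positional embedding and Lemma \ref{2->1} to force $\Delta_{i,n-i}(x)=0$. You instead bypass the entire differential-algebra apparatus by proving the parabolic factorization $S_n=\iota_1(S_k)\,\iota_2(S_{n-k})\,\Theta_k$ in $\mc[\fb_n]$ (which is valid: $\mathfrak{S}_{k,n-k}$ is exactly the set of minimal-length right-coset representatives of $\mathfrak{S}_k\times\mathfrak{S}_{n-k}$, lengths add, and the Matsumoto section is multiplicative there, consistent with Lemma \ref{perm}), and then using Lemma \ref{2->1} to see that the two commuting embedded symmetrizers are invertible on the finite-dimensional cyclic module $\mc[X_x]$, so $S_nx=0$ forces $\Theta_kx=0$ for every $0<k<n$ at once. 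In effect you replace the duality input ``$x''\in\mathfrak{I}(V)$'' by the purely combinatorial identity in the group algebra; the conclusion $\iota_2(S_{n-k})\Theta_kx=0$ reached by both arguments is then contradicted (or rather, resolved) in the same way via Lemma \ref{2->1}. What your approach buys is self-containedness --- the theorem could be stated and proved immediately after Section 5, with uniform treatment of all bidegrees including $k=1$ and $k=n-1$ --- at the cost of having to verify the identification of $\Delta_{k,n-k}$ with the shuffle operator $\Theta_k$ for all intermediate $k$ (the paper only records this for $k=n-1$, though it asserts the general fact in its own proof); what the paper's route buys is that it reuses the differential calculus it wants to develop anyway, and Proposition \ref{prop0} has independent interest.
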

\begin{proof}
The case $n=2$ is clear.
\par
Let $n>3$ and $x$ be a solution of equation $S_nx=0$ with level $n$. It suffices to prove that $\Delta_{i,n-i}(x)=0$ for any $2\leq i\leq n-2$.
\par
From the definition, components in $\Delta_{i,n-i}(x)$ can be obtained by acting a shuffle element on $x$, we want to show that  $\Delta_{i,n-i}(x)=\sum x'\ts x''=0$.
\par
From Corollary \ref{cor2}, $x$ is of level $n$ implies that for any non-constant $a\in T(V^*)$, $\p_a^R(x)=0$. So from Proposition \ref{prop5}, $\Delta(x)-x\ts 1\in T(V)\ts \mathfrak{I}(V)$ and then $x''\in\mathfrak{I}(V)$, $S_{n-i}x''=0$. It is easy to see that there exists a positional embedding $\iota:\mathfrak{B}_{n-i}\hookrightarrow \mathfrak{B}_n$ such that 
$$\iota(S_{n-i})\left(\sum_{\sigma\in\mathfrak{S}_{i,n-i}}T_\s\right)(x)=0,$$
which means that the equation $\iota(S_{n-i})v=0$ has a non-zero solution in $\mc[X_x]$.  It contradicts Lemma \ref{2->1}.
\end{proof}
\begin{corollary}
Let $n\geq 2$ and $E_n$ be the set of level $n$ solutions of equation $S_nx=0$ in $V^{\ts n}$. Then $E_n$ is a subspace of $T(V)$. If we denote $P=\bigoplus_{n\geq 2}E_n$, then $P$ is a coideal and the ideal $K$ generated by $P$ in $T(V)$ is contained in $\mathfrak{I}(V)$.
\end{corollary}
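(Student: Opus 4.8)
The plan is to read the corollary off the main theorem of this section. That theorem supplies, for every $n\geq 2$, the inclusion of $E_n$ into the space of primitive elements of degree $n$ and into $Im(P_n)$, while the definition of level $n$ gives $E_n\subseteq\ker(S_n)$. Granting for the moment that each $E_n$ is a linear subspace, the remaining two assertions are formal. For $K\subseteq\mathfrak{I}(V)$: by Proposition \ref{schauenburg} we may identify $\mathfrak{I}(V)$ with $\bigoplus_{n\geq 0}\ker(S_n)\subseteq T(V)$, so $P=\bigoplus_{n\geq 2}E_n\subseteq\mathfrak{I}(V)$, and since $\mathfrak{I}(V)$ is a two-sided ideal, $K=T(V)\,P\,T(V)\subseteq\mathfrak{I}(V)$. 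For the coideal property of $P$: every homogeneous $x\in E_n$ with $n\geq 2$ has $\ve(x)=0$ and is primitive by the theorem, hence $\Delta(x)=x\ts 1+1\ts x\in P\ts T(V)+T(V)\ts P$; so $P$, once known to be a subspace, is automatically a coideal.

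Thus the real content is that $E_n$ is a subspace of $V^{\ts n}$. Closure under scalars is immediate, since $\mc[X_{\lambda v}]=\mc[X_v]$ for $\lambda\neq 0$ and $S_n$ is linear, so the conditions of Definition \ref{Def:leveln} are insensitive to rescaling. For closure under addition I would proceed as follows. Given $v,w\in E_n$, put $x=v+w$; by the theorem $v$ and $w$ are primitive of degree $n$, hence so is $x$, and therefore $x\in\ker(S_n)$. If $x=0$ we are done; otherwise it remains to check that for each $2\leq s\leq n-1$ and each positional embedding $\iota\colon\mathfrak{B}_s\hookrightarrow\mathfrak{B}_n$ the equation $\iota(\theta_s)y=y$ has no nonzero solution in $\mc[X_x]$. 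I would deduce this from the primitivity of $x$: primitivity forces $T_nx=0$ and $\Delta_{i,n-i}(x)=0$ for all $1\leq i\leq n-1$, so that every shuffle sum annihilates $x$; and the structure results of Section 4, namely Corollary \ref{1-thetan} together with Propositions \ref{decomposition:tn} and \ref{decomposition:sn}, allow one to convert a hypothetical nonzero $\iota(\theta_s)$-fixed vector in $\mc[X_x]$ — after decomposing it along $V^{\ts n}=V^{\ts r}\ts V^{\ts s}\ts V^{\ts(n-r-s)}$ to isolate $\theta_s$-fixed vectors in $V^{\ts s}$ — into a nonzero solution of $\iota'(S_{s'})z=0$ in $\mc[X_x]$ for some $2\leq s'\leq n-1$ and some positional embedding $\iota'$, which contradicts Lemma \ref{2->1} applied to the $\ker(S_n)$-element $x$.

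I expect this last step to be the main obstacle: a priori $\mc[X_{v+w}]$ is only contained in, not equal to, $\mc[X_v]+\mc[X_w]$, so one cannot simply invoke that $v$ and $w$ are of level $n$, and the needed rigidity of the $\mathfrak{B}_n$-module $\mc[X_x]$ must instead be extracted from the primitivity of $x$ via the theorem. Note that if one only wants the coideal and ideal statements, the subspace claim can be bypassed by replacing $E_n$ with the linear span of the level $n$ elements, which visibly consists of primitive elements lying in $\ker(S_n)$; everything else in the corollary is then bookkeeping.
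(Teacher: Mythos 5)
Your handling of the two formal assertions is fine and is essentially all the paper intends (it states the corollary without proof): by the theorem every element of $E_n$ is primitive, so $\Delta(x)=x\ts 1+1\ts x$ and $\ve(x)=0$, giving the coideal property of $P$; and $E_n\subseteq\ker(S_n)$ together with $\mathfrak{I}(V)=\bigoplus_{n\geq 2}\ker(S_n)$ (Proposition \ref{schauenburg}) and the fact that $\mathfrak{I}(V)$ is a two-sided ideal gives $K\subseteq\mathfrak{I}(V)$. The genuine gap is in your argument that $E_n$ is closed under addition. You propose to verify the second condition of Definition \ref{Def:leveln} for $x=v+w$ by converting a hypothetical nonzero $\iota(\theta_s)$-fixed vector of $\mc[X_x]$ into a nonzero element of $\ker(\iota'(S_{s'}))\cap\mc[X_x]$ and then invoking Lemma \ref{2->1} ``for the $\ker(S_n)$-element $x$''. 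This is circular: the hypothesis of Lemma \ref{2->1} is precisely that $x$ is of level $n$, which is what you are trying to establish; membership in $\ker(S_n)$ is not enough. Moreover the conversion you need is not available: the paper's machinery (Proposition \ref{prop1}, Corollary \ref{1-thetan}) passes from solutions of $(1-\s_{n-1}^2\cdots\s_1)y=0$ to $\theta$-fixed vectors, and the reverse passage $u\mapsto P_nX^{-1}u$ can produce zero, as the paper explicitly warns at the end of Section \ref{solutions}. Finally, primitivity of $x$ only controls $\Delta(x)$, i.e.\ the action of shuffle sums on $x$ itself, not the whole $\fb_n$-module $\mc[X_x]$, so it cannot by itself supply the rigidity you need.

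The subspace property has a direct linear-algebra proof that bypasses the main theorem entirely. Fix $s$ and a positional embedding $\iota$, put $T=\iota(\theta_s)\in End(V^{\ts n})$, and let $p_1$ be the spectral projection of $V^{\ts n}$ onto the generalized eigenspace $G_1$ of $T$ for the eigenvalue $1$; it is a polynomial in $T$ and hence preserves every $T$-invariant subspace, in particular every $\mc[\fb_n]$-submodule. Since $1-T$ is nilpotent on any nonzero $T$-invariant subspace of $G_1$, the condition ``$Ty=y$ has no nonzero solution in $\mc[X_v]$'' is equivalent to $\mc[X_v]\cap G_1=0$. Now if $v,w\in E_n$ and $u\in\mc[X_{v+w}]\cap G_1$, write $u=u_1+u_2$ with $u_1\in\mc[X_v]$, $u_2\in\mc[X_w]$; then $u=p_1(u)=p_1(u_1)+p_1(u_2)=0$ because $p_1(u_1)\in\mc[X_v]\cap G_1=0$ and likewise for $u_2$. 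As this holds for every $s$ and $\iota$, and $\ker(S_n)$ is a subspace, $v+w$ is again of level $n$ or zero; together with your (correct) observation that $\mc[X_{\lambda v}]=\mc[X_v]$ for $\lambda\neq 0$, this shows $E_n\cup\{0\}$ is a subspace and completes the proof.
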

$ $ \\
\begin{center}
List of notations\\
\begin{tabular}[t]{l|c|l|c}
\hline
Notation(s) & Section & Notation(s) & Section\\
\hline
$\mathcal{P}_{i,j}, P_{i,j}, \mathfrak{S}_{k,n-k}, T_w$ & \ref{Dynkin} &
$X, E_n$ & \ref{solutions}\\
$\Phi, \mathcal{N}$ & \ref{3.2} &
$\Pi_s^{\underline{k}}$ & \ref{diagonaltype}
\\
$\theta_n, \Delta_n$ & \ref{4.1} &
$\mathfrak{B}_H^+(V), \mathfrak{B}_H^-(V)$ & \ref{7.2}
\\
$S_n,T_n,P_n,T_n',L_n$ & \ref{decomposition} &
$\mathfrak{N}^+(V),\mathfrak{N}^-(V)$ & \ref{consda}
\\
$\mathbb{C}[X_v], \mathcal{H}$ & \ref{assumption} &
$\p_a^L, \p_a^R, \p_i^L, \p_i^R$ & \ref{derivation}
\\
\hline
\end{tabular}
\end{center}
$ $\\

\end{document}